\author{Louis de
  Thanhoffer de V\"olcsey} 
\author{Michel Van den Bergh}
\email{louicious@mac.com, michel.vandenbergh@uhasselt.be}
\address{Departement WNI,  Universiteit Hasselt,
 3590 Diepenbeek, Belgium.}
\thanks{The second author is a senior researcher at the FWO}
\keywords{Calabi-Yau deformations, negative cyclic homology, string topology}
\subjclass{Primary 14J32,16E40,16S80}
\numberwithin{equation}{section}
\title{Calabi-Yau Deformations and Negative Cyclic Homology}
\theoremstyle{plain}
\newtheorem{theorem}{Theorem}[section]
\newtheorem{corollary}[theorem]{Corollary}
\newtheorem{proposition}[theorem]{Proposition}
\newtheorem{lemma}[theorem]{Lemma}
\theoremstyle{definition}
\newtheorem{definition}[theorem]{Definition}
\theoremstyle{remark}
\newtheorem{remark}[theorem]{Remark}
\newtheorem{example}[theorem]{Example}
\renewcommand{\b}{\bullet}
\newcommand{\ds}{\oplus}
\newcommand{\tr}{\otimes}
\newcommand{\mor}{\longrightarrow}
\newcommand{\fun}{\mapsto}
\newcommand{\f}[1]{\ensuremath{\mathfrak{#1}}}
\newcommand{\RHom}{\operatorname{RHom}}
\newcommand{\Hom}{\operatorname{Hom}}
\newcommand{\Nilp}{\operatorname{Nilp}}
\newcommand{\Gd}{\operatorname{Gd}}
\newcommand{\MC}{\operatorname{MC}}
\newcommand{\CC}{\operatorname{CC}}
\newcommand{\C}{\operatorname{C}}
\newcommand{\HC}{\operatorname{HC}}
\newcommand{\HH}{\operatorname{HH}}
\newcommand{\poly}{\operatorname{poly}}
\newcommand{\divu}{\operatorname{div}}
\newcommand{\MCcal}{\operatorname{\mathcal{MC}}}
\newcommand{\Def}{\operatorname{Def}}
\newcommand{\Ext}{\operatorname{Ext}}
\newcommand{\Ad}{\operatorname{ad}}
\newcommand{\Id}{\operatorname{Id}}
\newcommand{\ad}{\operatorname{ad}}
\renewcommand{\div}{\operatorname{div}}
\begin{document}
\begin{abstract}
  In this paper we relate the deformation theory of Ginzburg
  Calabi-Yau algebras to negative cyclic homology. We do this by
  exhibiting a DG-Lie algebra that controls this deformation theory
  and whose homology is negative cyclic homology. We show that the
  bracket induced on negative cyclic homology coincides with Menichi's
  string topology bracket.  We show in addition that the obstructions
  against deforming Calabi-Yau algebras are annihilated by the map to
  periodic cyclic homology.

In the commutative case we show that our DG-Lie algebra is homotopy
equivalent to $(T^{\operatorname{poly}}[[u]],-u\operatorname{div})$.
\end{abstract}

\maketitle

\setcounter{tocdepth}{1}
\tableofcontents
\section{Introduction}
Throughout $k$ is a field of characteristic zero. In this paper we
discuss the deformation theory of Calabi-Yau $k$-algebras in the sense of
Ginzburg \cite{Gi}. Recall that a $k$-algebra $A$ is  $d$-Calabi-Yau
if it is perfect as $A^e$-module and there is an isomorphism\footnote{This isomorphism
is sometimes required to satisfy a certain symmetry condition but this happens
to be automatically satisfied. See \cite[App.\ C]{VdBsuper}.} in $D(A^e)$
\begin{equation}
\label{ref-1.1-0}
\eta:\RHom_{A^e}(A,A^e)\mor \Sigma^{-d}A
\end{equation}
In the rest of this introduction we fix a $d$-CY algebra $A$. Here and
throughout the paper we take the point of view that $\eta$ is part of
the structure of 
$A$. 

\medskip

The definition of a $d$-Calabi-Yau algebra can be ``relativized'' without
any difficulty. Hence there is an associated deformation theory. Our
first result in this paper is the construction of a DG-Lie algebra
which controls this deformation theory. 

To be more precise: let $\Nilp$ be the category of commutative, finite
dimensional, local $k$-algebras $(R,m)$ such that $R/m=k$. For $R\in
\Nilp$ let $\Def_{A,\eta}(R)$ be the category of $R$-algebras $B$ which are
 $d$-Calabi-Yau (with respect to $R$) and which are in addition 
equipped with an isomorphism $B\otimes_R k\cong A$ respecting~$\eta$. We view $\Def_{A,\eta}$ as a pseudo-functor from $\Nilp$ to the
category of groupoids $\Gd$.

For a nilpotent DG-Lie algebra $\mathfrak{h}$ let
$\MC(\frak{h})$ be the groupoid of solutions to the Maurer-Cartan
equation in $\mathfrak{h}$ (see \S\ref{ref-7-32}).  For an arbitrary DG-Lie algebra 
$\mathfrak{g}$ we have an
associated ``deformation functor''
\[
\MCcal(\mathfrak{g}):\Nilp\mor \Gd:(R,m)\mapsto \MC(\mathfrak{g}\otimes_k m)
\]
In this paper we introduce a DG-Lie algebra $\mathfrak{D}^\bullet(A,\eta)$
(see \S\ref{ref-8-36}) which controls the deformation theory of $(A,\eta).$\footnote{In fact this is slightly imprecise as $\mathfrak{D}^\bullet(A,\eta)$
is only determined up to a non-unique isomorphism.
The actual definition of $\mathfrak{D}^\bullet(A,\eta)$ depends on the lift of $\eta$ to an explicit cycle in a suitable complex
but we will ignore this subtlety in the introduction.}
\begin{theorem} (a combination of Prop.\ \ref{ref-6.2-30} and Thm \ref{ref-8.1-37} below) 
There is a
 morphism of pseudo-functors
  $\pi:\MCcal(\mathfrak{D}^\bullet(A,\eta)) \mor \Def_{A,\eta}$ which when evaluated on an arbitrary
  $R\in \Nilp$ is essentially surjective on objects and surjective on
  morphisms.
\end{theorem}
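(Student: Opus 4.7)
The plan is to construct $\pi$ explicitly and then verify the two surjectivity statements by standard obstruction-theoretic arguments, exploiting the fact that $\mathfrak{D}^\bullet(A,\eta)$ has been built (in \S\ref{ref-8-36}) precisely to encode both the $A_\infty$-deformation of $A$ and the compatible deformation of the Calabi-Yau duality $\eta$. Since the paper advertises that the homology of $\mathfrak{D}^\bullet(A,\eta)$ is negative cyclic homology, I expect $\mathfrak{D}^\bullet(A,\eta)$ to be a mapping cone of the form $\C^\bullet(A,A) \oplus u\C^\bullet(A,A)[\![u]\!]$-style complex augmented by data keeping track of the cycle representing $\eta$. A Maurer-Cartan element $\xi \in \mathfrak{D}^\bullet(A,\eta) \otimes m$ should then split into a classical Hochschild Maurer-Cartan piece, which produces a deformed $R$-algebra $B_\xi$ on the free $R$-module $A \otimes_k R$, together with an auxiliary piece certifying that the original cycle for $\eta$ extends to a cycle for an isomorphism $\eta_\xi: \RHom_{B_\xi^e}(B_\xi, B_\xi^e) \mor \Sigma^{-d} B_\xi$. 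Setting $\pi(\xi) = (B_\xi, \eta_\xi)$ and checking naturality in $R$ then defines the morphism of pseudo-functors.

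For essential surjectivity on objects, the plan is as follows. Given $(B,\eta_B) \in \Def_{A,\eta}(R)$, choose a semi-free bimodule resolution of $B$ over $B^e$ lifting a chosen resolution of $A$ over $A^e$. The multiplication on this lift, read off via the bar differential, provides the associative part of a would-be MC element; the Calabi-Yau isomorphism $\eta_B$, once represented by an explicit cycle using the same resolution, provides the auxiliary part. Rewriting the CY compatibility of $\eta_B$ with the deformed multiplication is exactly the statement that the resulting cochain in $\mathfrak{D}^\bullet(A,\eta)\otimes m$ satisfies the Maurer-Cartan equation. Any two such choices differ by a coboundary, which supplies an isomorphism in $\MC(\mathfrak{D}^\bullet(A,\eta)\otimes m)$ between the constructed MC element and whatever lift one started with, and also gives the desired isomorphism $\pi(\xi) \simeq (B,\eta_B)$.

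For surjectivity on morphisms, given MC elements $\xi_0, \xi_1$ and an isomorphism $\phi: \pi(\xi_0) \mor \pi(\xi_1)$ in $\Def_{A,\eta}(R)$, the strategy is to lift $\phi$ order-by-order along the filtration by powers of $m$ to an element $\lambda \in \mathfrak{D}^0(A,\eta) \otimes m$ whose associated gauge transformation $e^\lambda$ carries $\xi_0$ to $\xi_1$. At each stage the freedom to modify $\phi$ by the $R$-linear automorphism group of $\pi(\xi_1)$ absorbs the ambiguity arising from the CY piece; the key point is that any discrepancy between $\phi \cdot \xi_0$ and $\xi_1$ at order $n+1$ lives in the kernel-modulo-boundaries controlled by the same complex, and a standard inductive argument produces $\lambda$.

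The main obstacle I foresee is the bookkeeping required to match the Maurer-Cartan equation in $\mathfrak{D}^\bullet(A,\eta)$ with the pair (associative deformation, preservation of $\eta$) on the nose. This amounts to comparing the explicit differential and bracket on $\mathfrak{D}^\bullet(A,\eta)$ from \S\ref{ref-8-36} with the obstruction calculus for lifting $(B,\eta_B)$ along the small extensions $R \twoheadrightarrow R/m^n$. Once this dictionary is established, both essential surjectivity on objects and surjectivity on morphisms follow by filtered induction, using that at each stage the residual obstruction can be killed by modifying the previously chosen lift within its homology class.
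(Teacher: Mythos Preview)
Your plan has the right overall shape—split a Maurer--Cartan element into an associative deformation plus negative-cyclic data—but it diverges from the paper's argument and leaves a genuine gap in the morphism step.

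The paper uses no obstruction theory or filtered induction. It introduces an intermediate groupoid $\Def^\flat_{A,\hat\eta_0}(R)$ whose objects are pairs $(\mu,\eta)$ with $\mu\in\bar{\mathfrak{C}}^1(A)\otimes m$ a deformed multiplication and $\eta\in\overline{\CC}^{\,-}_d(A)\otimes m$ an explicit \emph{cycle} for the cyclic differential $L_{\mu_0+\mu}+u\mathsf{B}$. The DG-Lie algebra $\mathfrak{D}^\bullet(A,\eta_0)$ is not a mapping cone of the kind you guess but the semi-direct product $\bar{\mathfrak{C}}^\bullet(A)\ltimes\Sigma^{-d-1}\overline{\CC}^{\,-}_\bullet(A)$ twisted by the MC element $(0,s^{-d-1}\eta_0)$; unwinding its MC equation and gauge action (Lemmas~\ref{ref-8.3-40}--\ref{ref-8.6-43}) shows directly that $\MC\to\Def^\flat$ is an \emph{isomorphism} of groupoids, not merely a surjection. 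The remaining map $\Def^\flat\to\Def$ (Prop.~\ref{ref-6.2-30}) is elementary: essential surjectivity uses only that a flat $B$ over $(R,m)$ satisfies $B\cong A\otimes_k R$ as $R$-modules—no auxiliary bimodule resolutions are needed—and surjectivity on morphisms hinges on the vanishing $\HC^-_{d+1}(A)=0$ from Proposition~\ref{ref-5.7-25}.

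That vanishing is exactly what your morphism argument is missing. You write that ``the freedom to modify $\phi$ by the $R$-linear automorphism group of $\pi(\xi_1)$ absorbs the ambiguity'', but surjectivity on morphisms means \emph{every} given $\phi$ must have a preimage; you are not permitted to change $\phi$. What actually happens is: $\phi$ uniquely determines $f$ with $e^f=\phi$, and the only remaining choice is $\xi\in\overline{\CC}^{\,-}_{d+1}(A)\otimes m$ satisfying $(L_{\mu_0+\mu_2}+u\mathsf{B})(\xi)=\pm(\phi(\eta_1)-\eta_2)$. Since $\phi(\bar\eta_1)=\bar\eta_2$, such a $\xi$ exists in $\overline{\CC}^{\,-}_{d+1}(A)\otimes R$, but one must correct it to lie in $\overline{\CC}^{\,-}_{d+1}(A)\otimes m$; this is possible precisely because $\HC^-_{d+1}(A)=0$, a consequence of the Calabi--Yau condition. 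Your inductive sketch provides no mechanism for this correction, so as written the argument would not close.
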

We obtain in particular for $R\in \Nilp$ a bijection between
$\MC(\mathfrak{D}^\bullet(A)\otimes_k m)/{\cong}$ and
$\Def_{A,\eta}(R)/{\cong}$.  In this sense the deformation theory of $A$
is controlled by the DG-Lie algebra $\mathfrak{D}^\bullet(A,\eta)$.

\medskip

$\mathfrak{D}^\bullet(A,\eta)$ is constructed as a twisted semi-direct product
of the Hochschild cochain complex with the negative cyclic chain complex of $A$. So the construction is
similar in spirit to \cite{TerTrad} which treats finite dimensional $A_\infty$-algebras
with a non-degenerate inner product. However our algebras are not finite dimensional
and they do not carry an inner product.

The construction of $\mathfrak{D}^\bullet(A,\eta)$  yields a  
morphism
\[
\phi:\mathfrak{D}^\bullet(A,\eta) \mor \bar{\mathfrak{C}}^\bullet(A)
\]
where $\bar{\mathfrak{C}}^\bullet(A)$ is the (shifted) Hochschild cochain complex of $A$.
As is well-known, $\bar{\mathfrak{C}}^\bullet(A)$ controls the deformation theory of $A$ as  algebra. 
The morphism $\phi$
corresponds to ``forgetting
$\eta$'' as is explained in \S\ref{ref-9-47}.

\medskip

The next result is the construction of  an explicit quasi-isomorphism of complexes 
\begin{equation}
\label{ref-1.2-1}
\mathfrak{D}^\bullet(A,\eta)\xrightarrow{\cong} \Sigma^{-d+1}\CC^-_\bullet(A)
\end{equation}
between $\mathfrak{D}^\bullet(A,\eta)$ and the shifted negative cyclic complex
$\CC^-_\bullet(A)$. As a result we obtain the following information about
the deformation theory of Calabi-Yau algebras. 
\begin{theorem} 
\begin{enumerate}
\item The tangent space to the deformation space of a $d$-Calabi-Yau
  algebra is $\HC^-_{d-2}(A)$. 
\item The obstructions against deforming a
  $d$-Calabi-Yau algebra are in the kernel of the canonical map from $\HC^-_{d-3}(A)$
to periodic cyclic homology $\HC^{\text{per}}_{d-3}(A)$.
\end{enumerate}
\end{theorem}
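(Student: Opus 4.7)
The plan is to use the quasi-isomorphism (\ref{ref-1.2-1}) to transport the general deformation-theoretic invariants of $\mathfrak{D}^\bullet(A,\eta)$ — namely, the tangent and obstruction spaces of its Maurer-Cartan pseudo-functor — into statements about negative cyclic homology. For part (1), I invoke the standard Maurer-Cartan formalism: whenever a DG-Lie algebra $\mathfrak{g}$ controls a deformation functor $F$ in the sense provided by Theorem 1 (essential surjectivity on objects, surjectivity on morphisms), the tangent space of $F$ at the base point identifies canonically with $H^1(\mathfrak{g})$. Applied to $\mathfrak{g}=\mathfrak{D}^\bullet(A,\eta)$ and combined with (\ref{ref-1.2-1}), a shift-of-degree calculation yields
\[
H^1(\mathfrak{D}^\bullet(A,\eta)) \cong H^1(\Sigma^{-d+1}\CC^-_\bullet(A)) \cong \HC^-_{d-2}(A),
\]
which is assertion (1).

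For part (2), the same abstract formalism places obstructions a priori in $H^2(\mathfrak{D}^\bullet(A,\eta)) \cong \HC^-_{d-3}(A)$; the task is to sharpen this to membership in $\ker(\HC^-_{d-3}(A) \to \HC^{\text{per}}_{d-3}(A))$. My plan is to construct a ``periodic'' companion $\mathfrak{D}^\bullet_{\text{per}}(A,\eta)$ equipped with a DG-Lie morphism $\mathfrak{D}^\bullet(A,\eta) \to \mathfrak{D}^\bullet_{\text{per}}(A,\eta)$ that lies over the natural chain map $\CC^-_\bullet(A) \to \CC^{\text{per}}_\bullet(A)$, and then to verify that (i) the obstruction class attached to any Maurer-Cartan datum in $\mathfrak{D}^\bullet$ pushes forward to the obstruction class of a corresponding datum in $\mathfrak{D}^\bullet_{\text{per}}$, and (ii) obstructions in $\mathfrak{D}^\bullet_{\text{per}}$ vanish because the induced bracket on $H^\bullet(\mathfrak{D}^\bullet_{\text{per}}(A,\eta)) \cong \HC^{\text{per}}_{d-1-\bullet}(A)$ is null-homotopic.

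The main obstacle is step (ii), which amounts to understanding how the DG-Lie bracket on $\mathfrak{D}^\bullet(A,\eta)$ — built in \S\ref{ref-8-36} as a twisted semi-direct product of Hochschild cochains with negative cyclic chains — behaves after inverting $u$. The mechanism I expect is that the cyclic component of the bracket carries an intrinsic factor of Connes's operator $B$, which becomes a coboundary on $\CC^{\text{per}}_\bullet(A)$ once $u$ is invertible; the bracket itself then becomes null-homotopic, and an explicit contracting homotopy can be extracted from the semi-direct product data. Making this precise, and in particular producing the homotopy cleanly enough that it descends to the statement about obstruction classes rather than only about individual cocycles, is the technical heart of the argument.
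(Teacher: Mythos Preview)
Your argument for part (1) is correct and matches the paper: it is a formal consequence of the quasi-isomorphism \eqref{ref-1.2-1} together with the standard identification of the tangent space with $H^1$ of the controlling DG-Lie algebra.

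For part (2) your overall strategy --- push the obstruction class forward along a map to a ``periodic'' object in which obstructions vanish --- is in the same spirit as the paper, but your proposed implementation has a genuine gap. You form $\mathfrak{D}^\bullet_{\text{per}}(A,\eta)$ as the semi-direct product $\bar{\mathfrak{C}}^\bullet(A)\ltimes \Sigma^{-d-1}\overline{\CC}^{\text{per}}_\bullet(A)$ and then claim its bracket becomes ``null-homotopic'' because the $\mathsf{B}$ in Menichi's formula is a coboundary once $u$ is inverted. Two problems. First, the bracket on $\mathfrak{D}^\bullet_{\text{per}}$ still contains the full Gerstenhaber bracket on the $\bar{\mathfrak{C}}^\bullet(A)$ summand, which is certainly not null-homotopic; Theorem \ref{ref-10.2-51} describes only the bracket \emph{induced on cohomology} after transporting along $\Psi$, not the chain-level bracket on the semi-direct product. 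Second, and more seriously, even a chain-level null-homotopy of the binary bracket would not kill obstructions: that requires the DG-Lie algebra to be homotopy abelian, i.e.\ $L_\infty$-quasi-isomorphic to an abelian one, and a contracting homotopy for the $2$-ary operation alone does not furnish such an $L_\infty$-map --- higher Massey-type operations can still carry obstructions.

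The paper supplies exactly the missing ingredient by invoking a theorem of Tsygan and Daletskii: the Lie action of $\bar{\mathfrak{C}}^\bullet(A)$ on $\overline{\CC}^{\,-}_\bullet(A)$ extends to an $L_\infty$-action of $(\bar{\mathfrak{C}}^\bullet(A)[u,\epsilon],\, d+u\,\partial/\partial\epsilon)$ with $|\epsilon|=1$, $\epsilon^2=0$. The role of the odd variable $\epsilon$ is that after inverting $u$ the enlarged cochain factor $(\bar{\mathfrak{C}}^\bullet(A)[u,u^{-1},\epsilon],\, d+u\,\partial/\partial\epsilon)$ is \emph{acyclic}; hence the inclusion of the abelian ideal $\Sigma^{-d-1}\overline{\CC}^{\text{per}}_\bullet(A)$ into the corresponding twisted semi-direct product is an $L_\infty$-quasi-isomorphism. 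Inverting it and composing produces a genuine $L_\infty$-morphism $\Delta:\mathfrak{D}^\bullet(A,\eta)\to\Sigma^{-d+1}\overline{\CC}^{\text{per}}_\bullet(A)$ into an abelian DG-Lie algebra which, on cohomology, realises the canonical map $\HC^-\to\HC^{\text{per}}$. Functoriality of obstructions under $L_\infty$-morphisms then finishes the proof. Without the Tsygan--Daletskii extension (or an equivalent device) your argument does not produce such an $L_\infty$-map, and step (ii) cannot be completed as written.
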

The first statement is a formal consequence of \eqref{ref-1.2-1}. The
second statement is Theorem \ref{ref-12.1-70} below.  It follows in
particular that if $\HC_{d-3}^-(A)\rightarrow
\HC^{\text{per}}_{d-3}(A)$ is injective then the deformation theory of
$A$ as Calabi-Yau algebra is unobstructed. This happens for example if
$d\le 3$ (see Corollary \ref{ref-12.5-74} and Lemma \ref{ref-12.6-75}
below).

\medskip

Our next result is the description of the Lie bracket on $\HC_\bullet^-(A)$ induced by \eqref{ref-1.2-1}:
\begin{theorem} (Theorem \ref{ref-10.2-51} below)
The Lie bracket on negative cyclic homology induced by \eqref{ref-1.2-1} is
the ``string topology'' Lie bracket introduced in \cite{Menichi} by Menichi.
\end{theorem}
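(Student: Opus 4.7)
The plan is to compare the two brackets at the chain level, using the explicit semi-direct-product description of $\mathfrak{D}^\bullet(A,\eta)$ on one side and the characterization of Menichi's bracket as a BV-bracket with Connes' operator as BV-operator on the other.

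First I would unpack the bracket inherited on $\CC^-_\bullet(A)$ from the quasi-isomorphism \eqref{ref-1.2-1}. Because $\mathfrak{D}^\bullet(A,\eta)$ is built as a twisted semi-direct product of $\bar{\mathfrak{C}}^\bullet(A)$ with $\CC^-_\bullet(A)$, its bracket decomposes into three pieces: the Gerstenhaber bracket on the cochain factor, Lie derivatives of chains along cochains, and a chain-chain contribution coming from the $\eta$-twisting. A class in $\HC^-_\bullet(A)$ can be represented, via \eqref{ref-1.2-1}, by a cycle in $\mathfrak{D}^\bullet(A,\eta)$ concentrated on the chain factor, so that the induced bracket on $\HC^-_\bullet(A)$ is exactly this chain-chain term.

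Second, Menichi's bracket is recognizable as follows: Calabi-Yau duality $\HH^\bullet(A)\cong \HH_{d-\bullet}(A)$ transports the cup product from cohomology to $\HH_\bullet(A)$, Connes' $B$ plays the role of BV-operator, and the resulting Lie bracket
\[
\{a,b\} = B(a\cup b) - B(a)\cup b - (-1)^{|a|}a\cup B(b)
\]
lifts to $\HC^-_\bullet(A)$. The comparison therefore reduces to a chain-level identity: the $\eta$-contraction $\bar{\mathfrak{C}}^\bullet(A)\to \Sigma^{-d}\CC^-_\bullet(A)$ intertwines the Gerstenhaber bracket on cochains with the BV-deviation $B\circ\cup - \cup(B\otimes 1) - \cup(1\otimes B)$ on chains. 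This is a form of the Tamarkin-Tsygan calculus of the Hochschild pair, and it is encoded directly in the construction of $\mathfrak{D}^\bullet(A,\eta)$: the $\eta$-twisting of the semi-direct product is chosen precisely so that its chain-chain bracket realizes the BV-deviation of cup product through Connes' $B$.

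The main obstacle is executing this identification with correct signs and normalizations, since both brackets are defined only up to explicit cochain-level homotopies and the transferred cup product on $\HH_\bullet(A)$ depends on a specific cycle lift of $\eta$. I would carry out the computation inside Tsygan's $(b,B)$-bicomplex, where $B$ is a strict differential and the BV identity admits a chain-level lift, and then verify compatibility with the explicit quasi-isomorphism \eqref{ref-1.2-1} constructed earlier in the paper. Once the chain-level identity is established, passing to $\HC^-_\bullet(A)$ is formal, and the theorem follows.
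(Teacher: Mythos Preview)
Your proposal rests on two claims that both fail, and together they make the argument collapse.

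First, you assert that the bracket on $\mathfrak{D}^\bullet(A,\eta_0)$ has ``a chain-chain contribution coming from the $\eta$-twisting.'' It does not. The $\eta$-twist is a Maurer-Cartan twist: $\mathfrak{D}^\bullet(A,\eta_0)=\mathfrak{D}^\bullet(A)^\sharp_x$ with $x=(0,s^{-d-1}\eta_0)$, and such a twist replaces the differential by $d+[x,-]$ while leaving the Lie bracket untouched. The bracket is therefore exactly the semi-direct product bracket $[(g,m),(g',m')]=([g,g'],\,L_g m'-(-1)^{|g'||m|}L_{g'}m)$, which vanishes identically on pairs of pure chain elements.

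Second, you claim that every class in $\HC^-_\bullet(A)$ can be represented by a cocycle of $\mathfrak{D}^\bullet(A,\eta_0)$ concentrated on the chain factor. This is false: under $\Psi$ the chain factor $\Sigma^{-d-1}\overline{\CC}^{\,-}_\bullet(A)$ maps to $u\,\Sigma^{-d+1}\overline{\CC}^{\,-}_\bullet(A)$ (see the diagram in the proof of Theorem~\ref{ref-10.1-48}), so only classes in the image of multiplication by $u$ arise this way. Combined with the previous point, your scheme would only show that the induced bracket vanishes on $u\HC^-_\bullet(A)$, which is true but trivial (since $\pi(u\eta)=0$ on both sides) and tells you nothing about the bracket in general.

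The paper's proof proceeds in the opposite direction: it takes general cocycles $(\mu_i,s^{-d-1}\eta_i)$ with nonzero cochain components $\mu_i$, computes $\Psi$ of their bracket, and then uses the calculus identities (in particular \eqref{ref-4.8-13} and Lemma~\ref{ref-4.2-14}) to eliminate the $\eta_i$ and rewrite everything as $\mathsf{B}\,i_{\mu_1\cup\mu_2}\pi(\eta_0)$ up to coboundaries, via the key Lemma~\ref{ref-10.3-52}. The cochain components are where the bracket actually lives; you cannot avoid them.
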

Let us now specialize to the case where $A$ is commutative. Let
$T^{\poly,\bullet}(A)$ be the Lie algebra of poly-vector fields on $A$. Then $\eta$ in \eqref{ref-1.1-0}
may be interpreted as a volume form (see \S\ref{ref-11-59} below).
Let $\divu$ be the divergence
operator on $T^{\poly,\bullet}(A)$  associated to $\eta$.  
Using
Willwacher's ``formality for cyclic chains'' \cite{Willwacher1} 
(see also \cite{Dolgushev,Shoikhet2,Tsygan})
we
show that there is an
isomorphism
\[
(T^{\poly,\bullet}[[u]],-u\divu)\xrightarrow{\cong} \mathfrak{D}^\bullet(A,\eta)
\qquad \text{($|u|=2$)}
\]
in the homotopy category of DG-Lie algebras which fits in a commutative diagram
\begin{equation}
\label{ref-1.3-2}
\xymatrix{
(T^{\poly,\bullet}(A)[[u]],-u\divu)\ar[r]^-\cong\ar[d]_{u\mapsto 0}& 
\mathfrak{D}^\bullet(A,\eta)\ar[d]^\phi\\
T^{\poly,\bullet}(A)\ar[r]_\cong&\bar{\mathfrak{C}}^\bullet(A)
}
\end{equation}
The lower arrow is a globalized version of  Kontsevich's formality
quasi-isomor\-phism~\cite{Ko3}.  

\medskip

We conclude that the deformation theory of a commutative Calabi-Yau algebra $A$ is controlled by the DG-Lie algebra
$(T^{\poly,\bullet}(A)[[u]],-u\divu)$.
We obtain in particular an explanation of 
 Dolgushev's result \cite{Dolgushev2} that 
the Kontsevich $\ast$-product associated to a divergence free Poisson
bracket is Calabi-Yau.\footnote{This is a special
case Dolgushev's result. Dolgushev does not restrict to the Calabi-Yau case.}  To see this it suffices to apply
the $\operatorname{MC}$-functor to \eqref{ref-1.3-2}.
\begin{remark} The referee points out that the first work on deformation theory of an algebra
with some sort of  CY property is due to Connes, Flato and Sternheimer~\cite{CFS}.
In the topological setting they considered algebras with a trace such that the scalar
product $\operatorname{tr}(ab)$ is nondegenerate and they related deformations of such 
algebras to  cyclic cohomology. Kontsevich made a corresponding ``formality for cyclic cochains'' conjecture
which was  proven by Calaque and Willwacher  in \cite{Willwacher2}.

The results in this paper are similar in spirit but technically quite different. We depend
in fact on Willwacher's result on formality for cyclic \emph{chains} \cite{Willwacher1} rather than cochains. 
\end{remark}
\section{Acknowledgement}
We like to thank Damien Calaque and Boris Tsygan for help with references. We also thank the 
anonymous referee for his careful reading of the manuscript and for his comments on the work
of Connes, Flato and Sternheimer.
\section{Notation and conventions}
All rings and ring homomorphisms are unital. We mix homological and cohomological indices,
using the convention $X_i=X^{-i}$.

\section{Preliminaries on the Hochschild and cyclic  complexes}
\label{ref-4-3}
In this section remind the reader about  the basic operations on the
Hochschild and cyclic complexes. The reason for putting this section
first is that it also allows us to introduce some notation.
Readers vaguely familiar with the material may safely
skip to the next section.

\subsection{Notation}

Let $R$ be a commutative ring and assume that $B$ is an 
$R$-algebra.  
Let $\C_\bullet(B)$ and $\C^\bullet(B)$ denote the usual relative Hochschild chain and
cochain complexes of $B/R$. Thus
\begin{align*}
\C^\bullet(B)&= \bigoplus_{n}\Hom_R(\Sigma B^{\otimes n},B)\\
\C_\bullet(B)&=  \bigoplus_n B\otimes(\Sigma B)^{\otimes n}
\end{align*}
where here and below, \emph{all unadorned tensor products are over $R$}.
We also use
\begin{align*}
\mathfrak{C}^\bullet(B)&= \Sigma\C^\bullet(B)\label{cohom}\\
&=\bigoplus_{n}\Hom_R(\Sigma B^{\otimes n},\Sigma B)
\end{align*}
We will also consider the \emph{normalized versions} of these objects.
\begin{align*}
\bar{\C}^\bullet(B)&= \bigoplus_{n}\Hom_R(\Sigma (B/R)^{\otimes n},B)\\
\bar{\C}_\bullet(B)&= \bigoplus_n B\otimes \Sigma (B/R)^{\otimes n}
\end{align*}
and a similar definition for $\bar{\mathfrak{C}}^\bullet(B)$.
It is well-known that the obvious maps $\bar{\C}^\bullet(B)\mor \C^\bullet(B)$ and $\C_\bullet(B)\mor \bar{\C}_\bullet(B)$ are quasi-isomorphisms \cite[Thm 8.3.8, Lemma 8.3.7]{weibel}.

If $x\in \mathfrak{C}^n(B)$ then we write
$|x|=n-1$. Thus $|x|$ refers
to the cohomological degree of $x$. 
\subsection{The Hochschild cochain complex}
\label{ref-4.2-4}
The standard algebraic structures on the Hochschild cochain complex
can all be deduced from its structure as a \emph{brace algebra}  \cite{VG}. 
Recall that the braces are maps
\[
\f{C}^\b(B) \tr\ldots \tr \f{C}^\b(B) \mor \f{C}^\b(B):x\otimes x_1\otimes\cdots\otimes
x_m\mapsto x\{x_1,\ldots,x_m\}
\]
defined by
\begin{align*}
&x\{x_1, \ldots, x_m\}(b_1, \ldots b_n)=\\
& \sum_{0 \le i_1 \ldots \le i_m \le n} (-1)^\epsilon x(b_1, \ldots b_{i_1},x_1(b_{i_1+1}, \ldots b_{i_1+\vert x_1 \vert+1}), \ldots, b_{i_m}, x_m(b_{i_m+1}, \ldots ,b_{i_m+\vert x_m \vert+1}), \ldots b_n)
\end{align*}
where $\epsilon= \sum_1^m\vert x_k\vert i_k$ 
The corresponding \emph{Gerstenhaber Lie bracket} on $\mathfrak{C}^\bullet(B)$ is 
\[
[x,y]= x\{y\} -(-1)^{\vert x \vert\vert y \vert}y \{x\}
\]
Let $\mu\in \mathfrak{C}^1(B)=\Hom(\Sigma B\otimes \Sigma B,\Sigma B)$
denote the \emph{``inverse'' multiplication} $\mu(b_1,b_2)=-b_1b_2$. Then $[\mu,\mu]=0$ and hence 
\begin{equation}
\label{ref-4.1-5}
dx=[\mu,x]
\end{equation}
defines a differential of degree one on
$\mathfrak{C}^\bullet(B)$. 

The \emph{cupproduct} on $\mathfrak{C}^\bullet(B)$  
is defined
by 
\[
x\cup y=(-1)^{|x|}\mu\{x,y\}
\]
This is an associative product of degree one on
$\mathfrak{C}^\bullet(B)$, or equivalently an associative product of degree zero
on $\C^\bullet(B)$. One has \cite{VG}
\begin{enumerate}
\item
$(\f{C}^\b(B),d, [\ ,\ ])$ is a DG-Lie algebra
\item
$(\C^\b(B), d,\cup)$ is a DG-algebra, commutative up to homotopy
\item More generally: ($\C^\b(B),d,[\ ,\ ],\cup$) is a so-called DG-``Gerstenhaber algebra''  up to homotopy
\end{enumerate}

\subsection{The Hochschild chain complex}
\label{ref-4.3-6}
When combining the Hochschild cochain complex with the Hochschild chain complex
things becomes much more intricate \cite{CR,TT}.
We will content ourselves by giving formulas for the basic operations and stating some relations
between them. We refer to \cite{TT} for more details.

The first basic operation is the \emph{contraction}.
\[
i_x(b_0\tr \ldots \tr b_n):=b_0x(b_1, \ldots b_d) \tr b_{d+1}\tr \ldots \tr b_n
\]
for $x\in \mathfrak{C}^\bullet(B)$ and $b_0\tr \ldots \tr b_n\in
\C_\bullet(B)$. 
This is an action of $\mathfrak{C}^\bullet(B)$
on $\C_\bullet(B)$ satisfying $|i_x|=|x|+1$ and
\begin{equation}
\label{ref-4.2-7}
i_xi_y=(-1)^{(|x|+1)(|y|+1)}i_{y\cup x}
\end{equation}
The contraction is often written as a \emph{capproduct}: $i_x(-)=x\cap -$.

The second basic operation is the \emph{Lie derivative}
\begin{align*}
L_x(b_0\tr \ldots \tr b_n):&=\sum_{i=0}^{n-\vert x\vert-1} (-1)^{\vert x\vert i}b_0\tr \ldots \tr b_i \tr x(b_{i+1}, \ldots , b_{i+\vert x\vert+1 })\tr \ldots \tr b_n\\
&+\sum_{i=n-\vert x\vert}^n (-1)^{n(i+1)+|x|}x(b_{i+1}, \ldots b_n, b_0, \ldots , b_{\vert x\vert -n+i})\tr \ldots \tr b_i
\end{align*}
 The Lie derivative defines a graded Lie action of
$\mathfrak{C}^\bullet(B)$ on $\C_\bullet(B)$. Explicitly: $|L_x|=|x|$ and
\begin{equation}
\label{ref-4.3-8}
[L_x,L_y]=L_{[x,y]}
\end{equation}
The \emph{Hochschild differential} on $\mathfrak{C}_\bullet(B)$ is defined as $\mathsf{b}=L_\mu$.  From \eqref{ref-4.1-5} and \eqref{ref-4.3-8} one obtains
\begin{equation}
\label{ref-4.4-9}
[\mathsf{b},L_x]=L_{dx}
\end{equation}
Hence
$(\C_\bullet(B),\mathsf{b})$ is a DG-Lie module over $\mathfrak{C}^\bullet(B)$. 
One also has compatibility with the contraction:
\begin{equation}
\label{ref-4.5-10}
[\mathsf{b},i_x]+i_{dx}=0
\end{equation}

The last basic operation we need is the \emph{Connes differential}. 
\[
\mathsf{B}:\C_\bullet(B)\mor \C_\bullet(B)
\]
with formula (see e.g.\ \cite[(2.1.7.3)]{Loday1})
\begin{align*}
\mathsf{B}(b_0\tr \ldots \tr b_n)&= \sum_{i=0}^n (-1)^{ni} 1\tr b_i \tr \ldots \tr b_n \tr b_0 \tr \ldots \tr b_{i-1}\\
\\&\quad +
\sum_0^n (-1)^{n(i+1)} b_{i-1}\tr 1\tr b_i \tr \ldots \tr b_n \tr b_0 \tr \ldots \tr b_{i-2}
\end{align*}
It is  well-known that $|\mathsf{B}|=-1$, 
$\mathsf{b}\mathsf{B}+\mathsf{B}\mathsf{b}=0$, $\mathsf{B}^2=0$.

\medskip

Some of the following identies hold only for normalized chains/cochains. Note that if $x\in \bar{\mathfrak{C}}^\bullet(B)$ then $i_x,L_x$ are well-defined operations
on $\bar{\C}_\bullet(B)$.
\begin{lemma} Assume $x\in \bar{\mathfrak{C}}^\bullet(B)$. Then on $\C_\bullet(B)$ we have
\begin{equation}
\label{ref-4.6-11}
[\mathsf{B},L_x]=0
\end{equation}
\end{lemma}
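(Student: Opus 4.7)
The plan is to prove the identity by direct computation from the explicit formulas for $\mathsf{B}$ and $L_x$. A useful organizational device is the classical decomposition $\mathsf{B} = (1-t)sN$ in terms of the cyclic structure of $\C_\bullet(B)$, where $t$ is the signed cyclic operator, $s$ is the extra degeneracy $s(b_0 \otimes \cdots \otimes b_n) = 1 \otimes b_0 \otimes \cdots \otimes b_n$, and $N = 1+t+\cdots+t^n$. One reads off this decomposition by recognizing the two sums in the formula for $\mathsf{B}$ as the contributions of $sN$ and $-tsN$ respectively.

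Given the decomposition, it suffices to establish two commutation statements. First, $[L_x, t] = 0$, which expresses the cyclic invariance of $L_x$: the ``internal'' and ``wrapping'' sums in its definition are designed precisely so that a cyclic shift of the input chain rotates them into each other. Second, $L_x$ commutes with $sN$ modulo the image of $1-t$. The second statement is where the normalization hypothesis $x\in \bar{\mathfrak{C}}^\bullet(B)$ is essential: when $L_x$ is applied to $1\otimes c_0\otimes\cdots\otimes c_n$, every term in which $x$ takes the inserted $1$ as an input vanishes, so only the internal contributions survive; after reindexing and applying the first commutation, these reassemble to $sN\cdot L_x$ up to a $(1-t)$-coboundary.

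Combining the two statements yields $[L_x, \mathsf{B}] = [L_x, (1-t)sN] = 0$. The main obstacle is the sign bookkeeping needed for $[L_x, t] = 0$: the prefactors $(-1)^{|x|i}$ and $(-1)^{n(i+1)+|x|}$ in the two sums of $L_x$ must interact correctly with the Koszul sign picked up by the cyclic shift, and the subtle point is to check that the boundary terms of the internal sum are exchanged with the terminal terms of the wrapping sum under $t$. As a sanity check, the identity passes to the normalized quotient $\bar{\C}_\bullet(B)$, where $\mathsf{B}$ reduces to $sN$ alone (the second sum in $\mathsf{B}$ being degenerate), and one may equally well carry out the verification there and lift, noting that $L_x$ preserves the degenerate subcomplex for normalized $x$.
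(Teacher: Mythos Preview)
The paper does not actually prove this lemma; it is recorded as a known identity (with the remark that it fails for unnormalized cochains) and is implicitly deferred to the literature on the Cartan calculus. In fact the stronger identity \eqref{ref-4.8-13} stated immediately afterwards already implies it: expanding the $u^1$-coefficient gives $L_x=[\mathsf{B},i_x]+[\mathsf{b},S_x]+S_{dx}$, and then $[\mathsf{B},L_x]=0$ follows from $\mathsf{B}^2=0$, $[\mathsf{b},\mathsf{B}]=0$, and the $u^2$-coefficient $[\mathsf{B},S_x]=0$.

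Your direct approach via $\mathsf{B}=(1-t)sN$ and the cyclic invariance $[L_x,t]=0$ is the standard one and is sound in outline, but the second step is misformulated in a way that breaks the logic. From $[L_x,t]=0$ one obtains $[L_x,\mathsf{B}]=(1-t)\,[L_x,sN]$, so what is needed is $[L_x,sN]\in\ker(1-t)=\operatorname{im}N$, \emph{not} $[L_x,sN]\in\operatorname{im}(1-t)$. These two subspaces are complementary, so ``modulo the image of $1-t$'' is precisely the wrong condition: under your stated hypothesis you would only conclude that $[L_x,\mathsf{B}]$ lies in $(1-t)\operatorname{im}(1-t)$, which is not zero. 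The correct organization is to use $[L_x,N]=0$ to write $[L_x,sN]=[L_x,s]\circ N$ and then show that $(1-t)\,[L_x,s]$ annihilates $\operatorname{im}N=\ker(1-t)$; the explicit cancellation you describe (the surviving $j=0$ internal term against the $i=n$ wrapping term) is indeed the mechanism, but it must be packaged to land in $\ker(1-t)$ rather than $\operatorname{im}(1-t)$.

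Your closing ``sanity check'' is not a valid alternative either: verifying $[\mathsf{B},L_x]=0$ on the normalized quotient $\bar\C_\bullet(B)$ only shows that on $\C_\bullet(B)$ the commutator takes values in the degenerate subcomplex, not that it vanishes. That $L_x$ and $\mathsf{B}$ preserve the degenerate subcomplex does not allow one to ``lift'' the identity back to $\C_\bullet(B)$.
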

The formula \eqref{ref-4.6-11} does not hold for unnormalized cochains. 
\subsection{The negative cyclic complex}
Let $u$ be a variable of degree two and put
\[
\overline{\CC}^{\,-}_\bullet(B)=\bar{\C}_\bullet(B)[[u]]
\]
Equipped with the \emph{cyclic differential} $\mathsf{b}+u\mathsf{B}$, this is the
 \emph{normalized negative cyclic complex}. In the sequel operations on $\bar{{\C}}_\bullet(B)$
will be (tacitly) extended to $\overline{\CC}^{\,-}_\bullet(B)$ by making
them $u$-linear.  This applies in particular to $i_x$ and $L_x$.
Combining \eqref{ref-4.4-9}\eqref{ref-4.6-11} we obtain
\begin{equation}
\label{ref-4.7-12}
[\mathsf{b}+u\mathsf{B},L_x]=L_{dx}
\end{equation}
The compatibility of $i_x$ with the cyclic differential is more subtle. In \cite{TT} 
(see also \cite{Getzler1}) Tamarkin and Tsygan define for $x\in \bar{\mathfrak{C}}^\bullet(B)$ 
a graded endomorphism $S_x$ of $\bar{\C}_\bullet(B)$  (depending linearly on $x$)
such that  $|S_x|=|x|-1$ and such that the following identity holds
\begin{equation}
\label{ref-4.8-13}
[\mathsf{b}+u\mathsf{B}, i_x+uS_x]+i_{dx}+uS_{dx}=uL_{x}
\end{equation}
on $\overline{\CC}^{\,-}_\bullet(B)$ (they attribute this result to Rinehart \cite{Rinehart}).
This identity will be important for us in the sequel. Note that it implies \eqref{ref-4.7-12}.

The following is a special case of \cite[Prop.\ 3.3.4]{TT}. 
\begin{lemma} \label{ref-4.2-14}
Let $x,y\in\bar{\mathfrak{C}}^\bullet(B)$ be such that $dx=dy=0$. Then $[L_x,i_y+uS_y]$ is homotopic
to $(-1)^{|x|}(i_{[x,y]}+uS_{[x,y]})$.
\end{lemma}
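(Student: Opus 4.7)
The plan is to use the Tamarkin--Tsygan Cartan identity \eqref{ref-4.8-13} together with the graded Jacobi identity for the commutator bracket of endomorphisms of $\overline{\CC}^{\,-}_\bullet(B)$ to reduce the homotopy statement to a two-step argument: first, show that the operator in question differs from $(-1)^{|x|}(i_{[x,y]}+uS_{[x,y]})$ by a $[\mathsf{b}+u\mathsf{B},-]$-cycle; and second, construct an explicit operator trivializing that cycle.

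For the first step, I would start by applying \eqref{ref-4.8-13} to $y$; since $dy=0$ this gives $[\mathsf{b}+u\mathsf{B},\,i_y+uS_y]=uL_y$. Taking the graded bracket with $L_x$ and using $[\mathsf{b}+u\mathsf{B},L_x]=L_{dx}=0$ (from \eqref{ref-4.7-12}), the graded Jacobi identity collapses to
\[
(-1)^{|x|}\bigl[\mathsf{b}+u\mathsf{B},\,[L_x,\,i_y+uS_y]\bigr] \;=\; [L_x,\,uL_y] \;=\; u\,L_{[x,y]},
\]
where the last equality is \eqref{ref-4.3-8}. Since the Gerstenhaber bracket of two cocycles is a cocycle, a second application of \eqref{ref-4.8-13}, now to $[x,y]$, yields $[\mathsf{b}+u\mathsf{B},\,i_{[x,y]}+uS_{[x,y]}]=uL_{[x,y]}$. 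Combining the two identities shows that
\[
D \;:=\; [L_x,\,i_y+uS_y]\;-\;(-1)^{|x|}\bigl(i_{[x,y]}+uS_{[x,y]}\bigr)
\]
is a cycle with respect to the commutator differential $[\mathsf{b}+u\mathsf{B},-]$ on $u$-linear graded endomorphisms of $\overline{\CC}^{\,-}_\bullet(B)$.

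The main obstacle is the second step: lifting ``$D$ is a cycle'' to ``$D$ is a boundary'', i.e.\ producing an explicit operator $T_{x,y}$ with $[\mathsf{b}+u\mathsf{B},T_{x,y}]=D$. I would follow Tamarkin--Tsygan and define $T_{x,y}$ as a Getzler--type higher operation built from the brace structure on $\bar{\mathfrak{C}}^\bullet(B)$: a signed sum over the ways of simultaneously inserting the two cochains $x$ and $y$ into a Hochschild chain, together with the cyclic rotations of the basepoint $1$ that are responsible for the $uS_{(-)}$ correction in \eqref{ref-4.8-13}. The proof that $[\mathsf{b}+u\mathsf{B},T_{x,y}]=D$ is then a face/sign bookkeeping on the polytope indexing $T_{x,y}$: the faces where $x$ and $y$ sit adjacent collapse to the bracket $[x,y]$ and produce the $i_{[x,y]}+uS_{[x,y]}$ contribution, the faces where a rotation degenerates produce the $[L_x,i_y+uS_y]$ contribution, and the remaining interior faces cancel in pairs by the quadratic relations among braces.

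Rather than redo the entire construction, I would simply extract $T_{x,y}$ from the ``calculus'' structure of \cite[Prop.\ 3.3.4]{TT}: that proposition produces precisely these higher operations, and the statement of our lemma is its $dx=dy=0$ specialization. The bulk of the work, and the hard part, lies in the explicit formula and sign check for $T_{x,y}$; the conceptual input is only the two-line Jacobi argument above.
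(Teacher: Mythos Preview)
Your proposal is correct and matches the paper's approach: the paper gives no proof at all, simply stating that the lemma is a special case of \cite[Prop.\ 3.3.4]{TT}, which is exactly the reference you invoke for the homotopy $T_{x,y}$. Your preliminary Jacobi computation showing that $D$ is a $[\mathsf{b}+u\mathsf{B},-]$-cycle is a pleasant sanity check that the paper omits, but the substantive content in both cases is the citation.
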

\subsection{A comment on base change}
\label{ref-4.5-15}
If $A$ is a $k$-algebra and $R$ is a commutative $k$-algebra then for
$B=A\otimes_k R$ it is clear that
$\C_{R,\bullet}(B)\cong\C_\bullet(A)\otimes_k R$ (where contrary to our usual conventions we have now made the base ring explicit in the notation). Since the negative
cyclic complex involves a product this is not true for
$\CC^-_{R,\bullet}(B)$. However it is true if $R$ is finite
dimensional. Similarly in that case we have $\C^\bullet_R(B)\cong \C^\bullet(A)\otimes_k R$.
In the sequel we will not mention these base change isomorphisms explicitly.
\subsection{Some comments on signs}
\label{ref-4.6-16}
In the previous sections the operations
$i_x,L_x,S_x,\mathsf{b},\mathsf{B}$ of degree $|x|+1$, $|x|$, $|x|-1$, $1$, $-1$
 were defined as acting on
$\bar{\C}_\bullet(B)$.
We define corresponding operations on shifts $\Sigma^r
\bar{\C}_\bullet(B)$ in the usual way.
\begin{align*}
  i_x(s^r b)&=(-1)^{r(|x|+1)}s^r i_x(b)\\
  L_x(s^r b)&=(-1)^{r|x|}s^r L_x(b)\\
  S_x(s^r b)&=(-1)^{r(|x|-1)}s^r S_x(b)\\
  \mathsf{b}(s^rb)&=(-1)^r s^r \mathsf{b}(b)\\
  \mathsf{B}(s^rb)&=(-1)^r s^r \mathsf{b}(B)
\end{align*}
where $s$ is the \emph{degree change operator} $|sb|=|b|-1$.

The relations between $i_x,L_x,S_x,\mathsf{b},\mathsf{B}$ stated in \S\ref{ref-4.3-6},\S\ref{ref-4.3-6}
carry over to all shifts $\Sigma^r \bar{\C}_\bullet(B)$ without
any sign changes, since all terms in the identities (necessarily) have
the same degree.

\section{Preliminaries on Calabi-Yau algebras}
In this section we extend Ginzburg's definition of  Calabi-Yau algebras to
the relative case. 

Let $R$ be a commutative ring. For an $R$-algebra $B$
we put $B^e=B\otimes_R B^\circ$.  We use without further comment
 the standard equivalences between the categories of left $B^e$-modules, right
$B^e$-modules and $B$-bimodules which are $R$-central.

A $B^e$-module is called \emph{perfect} if it has a finite resolution
by finitely generated projective $B^e$-modules. If $B$ is $R$-flat and
$B$ is a perfect $B^e$-module then we say that $B$ is
\emph{homologically smooth} over $R$. The implicit flatness hypothesis ensures
that $B^e=B\otimes_R B^\circ$ is the correct definition from a derived
point of view.  We could have avoided this hypothesis
by first replacing $B$ by an $R$-flat DG-resolution but for simplicity we
have chosen not to do this.

\begin{definition} (Ginzburg \cite{Gi}) \label{ref-5.1-17}
An $R$-Calabi-Yau algebra of
dimension~$d$ is a pair $(B,\eta)$ where 
\begin{enumerate}
\item
$B$ is an $R$-algebra $B$ which is homologically smooth over $R$;
\item
$\eta$ is an isomorphism $\RHom_{B^e}(B,B^e) \mor \Sigma^{-d} B$  in $D(B^e)$. 
\end{enumerate}
\end{definition}
\begin{remark} Note that the amount of freedom for $\eta$ is quite
limited. If $(B,\eta)$, $(B,\eta')$ are Calabi-Yau then there exists $z\in Z(B)$
such that $\eta'=z\eta$ (see \cite{davison}).
\end{remark}

Recall that if $M$ is a complex of  $B^e$-module then its \emph{Hochschild homology and cohomology}
are respectively defined as
\begin{align*}
\HH_i(B,M)&=H^{-i}(M\otimes^L_{B^e} B)\\
\HH^i(B,M)&=H^i(\operatorname{RHom}_{B^e}(B,M))
\end{align*}
As usual we write $\HH_i(B)=\HH_i(B,B)$ and similarly $\HH^i(B)=\HH^i(B,B)$. 
One has
\[
\HH_i(B)=H^{-i}(\C_{\bullet}(B))
\]
and if $B$ is a projective $R$-module then 
\[
\HH^i(B)=H^{i}(\C^{\bullet}(B))
\]
The operations $[\ ,\ ],\cup,\cap,L,\mathsf{B}$ introduced in \S\ref{ref-4-3} descend
to homology and make the pair $(\HH^\bullet(B),\HH_\bullet(B))$ into a so-called
\emph{calculus} structure \cite{TT}. Up to suitable, and for us irrelevant, signs
$\cup$ is the Yoneda products on $\HH^\bullet(B)=\Ext^\bullet_{B^e}(B,B)$ and $\cap$ is the action of
$\HH^\bullet(B)$ on $\HH_\bullet(B)=H^{-\bullet}(B\otimes^L_{B^e} B)$ through its action
on the second factor (see e.g.\ \cite[Prop 11.1, 12.1]{CRVdB}). 
\begin{lemma}\label{ref-5.3-18}
  Let $B$ be a homologically smooth algebra. Then for $M$ a perfect
  $B^e$-module there is a canonical isomorphism
\begin{equation}
\label{ref-5.1-19}
\HH_i(B,M) \cong \Hom_{B^e}(\Sigma^i \RHom_{B^e}(M,B^e),B)
\end{equation}
in $D(R)$. 
\end{lemma}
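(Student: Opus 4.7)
The plan is to reduce the lemma to the standard perfect-duality identity $P \otimes_S N \cong \Hom_S(\Hom_S(P, S), N)$, valid for any finitely generated projective module $P$ over a ring $S$. Here the role of $S$ is played by $B^e = B \otimes_R B^\circ$, which carries two commuting $B^e$-module structures: the ``outer'' one, used to form $\Hom_{B^e}(-, B^e)$, and the ``inner'' one, which endows $\Hom_{B^e}(P, B^e)$ with the $B^e$-action needed to iterate.

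Since $M$ is perfect, I would choose a bounded resolution $P^\bullet \to M$ by finitely generated projective $B^e$-modules. Then $M \otimes^L_{B^e} B$ is represented by $P^\bullet \otimes_{B^e} B$, and $\RHom_{B^e}(M, B^e)$ by $\Hom_{B^e}(P^\bullet, B^e)$, which is itself a bounded complex of finitely generated projective $B^e$-modules via the inner structure. Consequently $\RHom_{B^e}(\RHom_{B^e}(M, B^e), B)$ may be computed as the honest Hom complex $\Hom_{B^e}(\Hom_{B^e}(P^\bullet, B^e), B)$. The natural evaluation
\[
\mathrm{ev}_P : P \otimes_{B^e} B \longrightarrow \Hom_{B^e}(\Hom_{B^e}(P, B^e), B),\quad p \otimes b \longmapsto (f \mapsto f(p)\cdot b),
\]
is an isomorphism when $P = B^e$ (both sides collapse canonically to $B$), and by naturality and additivity it is an isomorphism for every finitely generated projective $P$, since any such $P$ is a direct summand of a finite free module. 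Applying $\mathrm{ev}$ term by term to $P^\bullet$ then delivers a quasi-isomorphism of complexes of $R$-modules, which represents the desired isomorphism in $D(R)$; the shift $\Sigma^i$ on the right absorbs the translation between the homological convention on the left (where $\HH_i = H^{-i}$) and the cohomological Hom on the right.

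The only real obstacle is bookkeeping: making sure that the outer and inner $B^e$-module structures on $B^e$ are tracked correctly through each step, so that the duality is applied with respect to the correct structure. Once the case $P = B^e$ has been checked with explicit conventions, the remainder is a formal consequence of additivity and the fact that finitely generated projectives are summands of finite free modules.
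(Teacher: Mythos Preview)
Your argument is correct and is exactly the approach taken in the paper, only spelled out in greater detail: the paper's proof consists of the two sentences ``Since $M$ is perfect we may replace it with a complex of finitely generated projective $B^e$-modules. In this way we reduce to $M=B^e$ which is an easy verification.'' Your evaluation map $\mathrm{ev}_P$ and the additivity/direct-summand reduction are precisely what that ``easy verification'' amounts to.
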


\begin{proof}
Since $M$ is perfect we may replace it with a complex of finitely
generated projective $B^e$-modules. In this way we reduce to $M=B^e$ which
is an easy verification.
\end{proof}

\begin{definition}
Let $B$ be a homologically smooth algebra $R$ and $\eta \in \HH_d(B)$. We say that $\eta$ is \emph{nondegenerate} if its image under \eqref{ref-5.1-19} is an isomorphism.
\end{definition}
This allows us to redefine a $d$-Calabi-Yau algebra over $R$ as a couple $(B,\eta)$ where
$B$ is a homologically smooth $R$-algebra
and $\eta$ is a non-degenerate element of  $\HH_d(B)$. 
Below we will massage this new definition further. Recall the following
\begin{proposition} (``Poincare duality'') \label{ref-5.5-20}
Assume that $(B,\eta)$ is a $d$-Calabi-Yau $R$-algebra. Then for each $i$, the map
\begin{equation}
\label{ref-5.2-21}
\HH^i(B) \mor \HH_{d-i}(B): \mu \fun \mu\cap \eta
\end{equation}
is an isomorphism
\end{proposition}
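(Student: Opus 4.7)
The plan is to reduce the claim to a tautology in the derived category, using Lemma \ref{ref-5.3-18} to re-express $\HH_{d-i}(B)$ and then transporting along the Calabi-Yau isomorphism $\eta$.

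First I would note that by homological smoothness of $B$, the $B^e$-module $M=B$ is perfect, so Lemma \ref{ref-5.3-18} applies and yields a natural isomorphism
\[
\HH_{d-i}(B) \cong \Hom_{D(B^e)}\bigl(\Sigma^{d-i}\RHom_{B^e}(B,B^e),\, B\bigr).
\]
Next, shifting the CY isomorphism $\eta:\RHom_{B^e}(B,B^e)\to \Sigma^{-d}B$ by $d-i$ produces an isomorphism $\Sigma^{d-i}\RHom_{B^e}(B,B^e)\xrightarrow{\sim}\Sigma^{-i}B$ in $D(B^e)$. Precomposition with this isomorphism therefore gives
\[
\Hom_{D(B^e)}\bigl(\Sigma^{d-i}\RHom_{B^e}(B,B^e),\, B\bigr) \;\cong\; \Hom_{D(B^e)}(\Sigma^{-i}B,B)=\Ext^i_{B^e}(B,B)=\HH^i(B).
\]
Composing these two isomorphisms produces an isomorphism $\HH^i(B)\xrightarrow{\sim}\HH_{d-i}(B)$.

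What remains is to identify this composite with the cap product $\mu\mapsto \mu\cap\eta$. Under the identification of Lemma \ref{ref-5.3-18}, the class $\eta\in \HH_d(B)$ corresponds precisely to the shifted morphism $\Sigma^d\RHom_{B^e}(B,B^e)\to B$ that represents the original $\eta$. Moreover, the paper already records (via the reference to \cite[Prop.\ 11.1, 12.1]{CRVdB}) that the cap product is the action of $\HH^\bullet(B)$ on $\HH_\bullet(B)=H^{-\bullet}(B\otimes^L_{B^e}B)$ obtained by post-composing with $\mu:B\to \Sigma^i B$ in the target copy of $B$. Hence for $\mu\in \HH^i(B)=\Hom_{D(B^e)}(B,\Sigma^i B)$, the element $\mu\cap \eta\in \HH_{d-i}(B)$ corresponds under Lemma \ref{ref-5.3-18} to the composite
\[
\Sigma^{d-i}\RHom_{B^e}(B,B^e)\xrightarrow{\;\Sigma^{-i}\eta\;} \Sigma^{-i}B\xrightarrow{\;\Sigma^{-i}\mu\;} B,
\]
which is exactly the image of $\mu$ under the composite isomorphism constructed above. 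Since $\eta$ is an isomorphism in $D(B^e)$, precomposition with it is invertible, so $\mu\mapsto\mu\cap\eta$ is an isomorphism as required.

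The main obstacle is the compatibility check in the last paragraph: one needs the chain-level cap product defined by the formula $i_x(b_0\otimes\cdots\otimes b_n)=b_0 x(b_1,\ldots,b_{|x|+1})\otimes b_{|x|+2}\otimes\cdots\otimes b_n$ to agree (up to the irrelevant signs mentioned after Lemma \ref{ref-5.3-18}) with the derived-category composition on the second tensor factor. Granting this standard identification the proof is essentially formal; without it one must verify by a direct resolution-level computation that the two candidate maps $\HH^i(B)\to\HH_{d-i}(B)$ coincide.
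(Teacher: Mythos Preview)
Your proposal is correct and follows essentially the same route as the paper. Both arguments use Lemma \ref{ref-5.3-18} to rewrite $\HH_\bullet(B)$ as $\Hom_{D(B^e)}(\Sigma^\bullet\RHom_{B^e}(B,B^e),B)$, precompose with the Calabi-Yau isomorphism $\eta^+$, and then invoke the same compatibility fact (the cap product agrees, up to sign, with the $\RHom_{B^e}(B,B)$-action on the target copy of $B$). The only cosmetic difference is that the paper packages the compatibility as ``$\xi$ transforms $\cup$ into $\cap$'' and then evaluates at $\sigma=\Id_B$, whereas you chase the element $\mu$ directly; the content and the one nontrivial input (the cited identification of the chain-level cap product with the derived action) are identical.
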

\begin{proof} The existence of the isomorphism was first stated in \cite{VdB29} without the explicit formula
\eqref{ref-5.2-21}. The formula \eqref{ref-5.2-21} is folklore. For completeness
we include a possible proof. 

The proof of Lemma \ref{ref-5.3-18} shows that there is a canonical isomorphism in $D(R)$
\begin{equation}
\label{ref-5.3-22}
\RHom_{B^e}(\RHom_{B^e}(B,B^e),\overset{\downarrow}{B})\cong B\otimes^L_{B^e}\overset{\downarrow}{B}
\end{equation}
which is compatible with the $\RHom_{B^e}(B,B)$-actions on the marked copies of~$B$.

By definition $\eta\in H^{-d}(B\otimes^L_{B^e}B)$ corresponds under \eqref{ref-5.3-22}
to an isomorphism $\eta^+:\RHom_{B^e}(B,B^e)\rightarrow
\Sigma^{-d}B$. This yields an isomorphism
\begin{equation}
\label{ref-5.4-23}
\RHom_{B^e}(\Sigma^{-d}B,\overset{\downarrow}{B})\mor \RHom_{B^e}(\RHom_{B^e}(B,B^e),\overset{\downarrow}{B})
:\theta\mapsto \theta\circ \eta^+
\end{equation}
also compatible with the  marked $\RHom_{B^e}(B,B)$-actions. Composing \eqref{ref-5.3-22}\eqref{ref-5.4-23}
gives us an isomorphism 
\[
\xi:\RHom_{B^e}(\Sigma^{-d}B,\overset{\downarrow}{B})\mor  B\otimes^L_{B^e}\overset{\downarrow}{B}
\]
which sends $\operatorname{Id}_B$ to $\eta$.

According to the discussion preceding Lemma \ref{ref-5.3-18}, the compatibility with
the $\RHom_{B^e}(B,B)$-actions  implies that $\xi$ transforms
$\cup$ into $\cap$ on the level of cohomology. More precisely
\[
\xi(\mu\cup \sigma)=\pm \mu\cap \xi(\sigma)
\]
The lemma now follows by taking $\sigma=\operatorname{Id}_B$.
\end{proof}
\begin{corollary}
\label{ref-5.6-24}
Assume that  $(B,\eta)$ is a $d$-Calabi-Yau $R$-algebra. Then
\begin{align*}
\HH^i(B)=0&\qquad\text{for $i\not \in [0,d]$}\\
\HH_i(B)=0&\qquad\text{for $i\not \in [0,d]$}\qed
\end{align*}
\def\qed{}\end{corollary}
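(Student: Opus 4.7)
The plan is to deduce this immediately from Poincaré duality (Proposition \ref{ref-5.5-20}), combined with the trivial observation that Hochschild (co)homology vanishes in negative degrees. There is essentially no work beyond citing the previous proposition.

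First I would dispose of the lower ends. Since $B$ is an honest $B^e$-module (concentrated in cohomological degree $0$), computing $\RHom_{B^e}(B,B)$ via any projective resolution $P_\bullet \to B$ supported in non-positive cohomological degrees yields a complex supported in non-negative cohomological degrees. Hence $\HH^i(B) = \Ext^i_{B^e}(B,B) = 0$ for $i<0$. Dually, $B\otimes^L_{B^e} B$ is computed by $P_\bullet \otimes_{B^e} B$, again in non-positive cohomological degrees, so $\HH_i(B) = H^{-i}(B\otimes^L_{B^e} B) = 0$ for $i<0$.

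Next I would invoke Proposition \ref{ref-5.5-20}: the cap product with $\eta$ gives an isomorphism
\[
\HH^i(B) \xrightarrow{\cong} \HH_{d-i}(B)
\]
for every $i$. Feeding the lower-end vanishings through this isomorphism gives $\HH_{d-i}(B)=0$ whenever $i<0$, i.e.\ $\HH_j(B)=0$ for $j>d$; and $\HH^{d-j}(B)=0$ whenever $j<0$, i.e.\ $\HH^i(B)=0$ for $i>d$. This finishes both statements.

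The only possible obstacle is that Proposition \ref{ref-5.5-20} has already been proved, so there is nothing substantial left to do; one simply has to make sure the trivial half (vanishing in negative degrees) is acknowledged explicitly, since Poincaré duality alone only asserts an isomorphism, not vanishing.
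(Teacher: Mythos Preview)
Your argument is correct and is exactly what the paper intends: the corollary is stated with a \qed and no proof, signalling that it follows immediately from Proposition~\ref{ref-5.5-20} together with the trivial vanishing of Hochschild (co)homology in negative degrees. You have simply made explicit the one-line deduction the authors left to the reader.
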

As before let $\CC^-_\bullet(B)=(\C_\bullet(B)[[u]],\mathsf{b}+u\mathsf{B})$ be the negative cyclic complex and
denote its corresponding homology by $\HC_\bullet^-(B)$.
\begin{proposition}\label{ref-5.7-25}
  Let $(B,\eta)$ be a $d$-Calabi-Yau $R$-algebra. Then 
$\HC^-_i(B)=0$ for $i>d$ and furthermore
the map 
\[
\pi:\CC^-_\b(B)
  \mor \C_\b(B): \sum b_iu^i \fun b_0
\] induces an isomorphism
$
\HC^-_d(B) \cong \HH_d(B)
.$
\end{proposition}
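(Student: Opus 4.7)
The approach I would take is to use the short exact sequence of complexes
$$0 \to u\CC^-_\bullet(B) \to \CC^-_\bullet(B) \xrightarrow{\pi} \C_\bullet(B) \to 0$$
combined with the vanishing $\HH_j(B) = 0$ for $j \notin [0,d]$ established in Corollary \ref{ref-5.6-24}. Since multiplication by $u$ yields a chain isomorphism $\CC^-_\bullet(B) \cong u\CC^-_\bullet(B)$ (shifted by $2$ in homological degree), the associated long exact sequence reads
$$\cdots \to \HC^-_{i+2}(B) \xrightarrow{u} \HC^-_i(B) \xrightarrow{\pi_*} \HH_i(B) \to \HC^-_{i+1}(B) \to \cdots$$

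For the vanishing statement, reverse induction on the long exact sequence has no natural starting point, so I would prove $\HC^-_i(B) = 0$ for $i > d$ directly by an iterative power-series construction. A cycle $z = \sum_{n \geq 0} u^n b_n$ with $b_n \in \C_{i+2n}(B)$ satisfies $\mathsf{b} b_0 = 0$ and $\mathsf{b} b_n = -\mathsf{B} b_{n-1}$ for $n \geq 1$. For $i > d$ each $b_n$ lies in Hochschild degree $>d$, where the relevant Hochschild homology vanishes, so I would inductively construct $c_n \in \C_{i+2n+1}(B)$ with $\mathsf{b} c_0 = b_0$ and $\mathsf{b} c_n = b_n - \mathsf{B} c_{n-1}$. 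At each step one must verify that $b_n - \mathsf{B} c_{n-1}$ is a Hochschild cycle, which follows from the cycle conditions on $z$ together with $\mathsf{b}\mathsf{B} + \mathsf{B}\mathsf{b} = 0$ and $\mathsf{B}^2 = 0$. The power series $\sum_n u^n c_n$ then lies in $\CC^-_{i+1}(B)$ and maps to $z$ under $\mathsf{b} + u\mathsf{B}$.

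For the isomorphism $\pi_* \colon \HC^-_d(B) \xrightarrow{\cong} \HH_d(B)$, the vanishings $\HC^-_{d+1}(B) = \HC^-_{d+2}(B) = 0$ established above sandwich $\HC^-_d(B)$ between two zero terms in the long exact sequence, so $\pi_*$ is an isomorphism by exactness.

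The main obstacle is the bookkeeping for the inductive construction (tracking Hochschild degrees and verifying the cycle condition at each step), but the underlying idea is a standard $u$-adic argument. As an alternative that handles both claims uniformly, one can use the spectral sequence of the filtration $F^p = u^p\CC^-_\bullet(B)$: in total homological degree $i$ its $E_1$-page consists of the terms $\HH_{i+2p}(B)$ for $p \geq 0$, which is zero for $i > d$ and concentrated at $p = 0$ (giving $\HH_d(B)$) for $i = d$. Convergence is automatic because the filtration is complete and Hausdorff.
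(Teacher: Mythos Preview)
Your proposal is correct. Your main argument (the explicit $u$-adic construction of a primitive, followed by the long exact sequence at degree $d$) is more hands-on than the paper's proof, which goes straight to the spectral sequence of the column filtration on $\CC^-_\bullet(B)$---precisely the alternative you sketch at the end with $F^p=u^p\CC^-_\bullet(B)$. The direct construction has the virtue of being spectral-sequence-free and making the role of the vanishing $\HH_{>d}(B)=0$ completely transparent at each step; the spectral sequence argument is shorter and handles both claims at once (since in total degree $i>d$ the $E_1$-page vanishes identically, and in total degree $d$ it is concentrated at $p=0$). Either way the content is the same: the only input beyond generalities is Corollary~\ref{ref-5.6-24}.
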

\begin{proof}
  We use a spectral sequence argument.  We view $\CC^-_\bullet(B)$ as a double complex
with $\mathsf{b}$ pointing vertically upwards and $u\mathsf{B}$ pointing horizontally to the
right. 
By Corollary \ref{ref-5.6-24} we have
  $\HH_i(B)=0$ for $i>d$. 
Hence if we filter $\CC^-_\bullet(B)$
  by column degree then the $E^1$ term of the resulting spectral sequence looks like
\[
\xymatrix{
0& \HH_{d-2}(B)\ar[r]^-{u\mathsf{B}} & u\HH_{d-1}(B)\ar[r]^-{u\mathsf{B}} &u^2\HH_d(B) \\
0& \HH_{d-1}(B)\ar[r]^-{u\mathsf{B}} & u\HH_d(B) & 0\\
0&\HH_d(B)& 0 & 0\\
0& 0 & 0 & 0
}
\]
From this the
  result follows.
\end{proof}
\begin{definition}
  Let $B$ be a homologically smooth $R$-algebra. We say that
  an element $\eta \in \HC^-_d(B)$ is non-degenerate if $\pi(\eta)$ is
  non-degenerate.
\end{definition}
The leads to the following redefinition of a Ginzburg $d$-Calabi-Yau $R$-algebra which
we use below.
\begin{definition}  (Restatement of Definition \ref{ref-5.1-17}.)
\label{ref-5.9-26}
A \emph{Calabi-Yau algebra of dimension $d$ over $R$} is a 
couple $(B,\eta)$ where $B$ is a homologically smooth $R$-algebra
and $\eta$ is a non-degenerate element of $\HC_d^-(B)$.
\end{definition}
We have shown that this definition is equivalent to Ginzburg's original definition. 
In the more general setting of  DG-algebras this is no longer the case. It is generally
believed that Definition \ref{ref-5.9-26} is the ``correct'' definition for
a $d$-Calabi-Yau algebra in the DG-case. This is the point of view of Kontsevich-Soibelman in \cite{KS2} and also of Keller \cite{Kl1}.
\section{Deformations of Calabi-Yau algebras}
\label{ref-6-27}
In this section we fix a $d$-Calabi-Yau $k$-algebra $(A,\eta_0)$  as in
Definition \ref{ref-5.9-26}. We will study the deformations of $A$ as a Calabi-Yau algebra.

\medskip

Let $\Nilp$ be the category of commutative, finite
dimensional, local $k$-algebras $(R,m)$ such that $R/m=k$. 
For $(R,m)\in
\Nilp$ we define a groupoid $\Def_{A,\eta_0}(R)$ as follows: the
objects in $\Def_{A,\eta_0}(R)$ are triples $(B,s,\eta)$ such that $B$ 
is an $R$-flat $R$-algebra,
$s:B\rightarrow A$ is an $R$-algebra map inducing an
isomorphism $B\otimes_R k\rightarrow A$ and  $\eta$ is an element of
$\HC^-_d(B)$ such that $s(\eta)=\eta_0$.

A morphism $(B_1,s_1,\eta_1)\rightarrow (B_2,s_2,\eta_2)$ is a commutative
diagram
 \begin{displaymath}
 \xymatrix{
 B_1\ar[rr]^\phi\ar[dr]_{s_1} & &B_2\ar[dl]^{s_2}\\
& A
 } \end{displaymath}
such $\eta_2=\phi(\eta_1)$. One sees that $\Def_{A,\eta_0}$ becomes a pseudo-functor
$\Nilp\rightarrow \Gd$ in the obvious way. 

To be able to rightfully claim that $\Def_{A,\eta_0}$ describes the Calabi-Yau deformations
of $(A,\eta_0)$ we need the following elementary lemma. 
\begin{lemma} Assume that $(B,s,\eta)\in \Def_{A,\eta_0}(R)$. Then $(B,\eta)$ is $d$-Calabi-Yau.
\end{lemma}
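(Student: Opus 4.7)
The plan is to verify the two conditions of Definition \ref{ref-5.9-26} for $(B,\eta)$: first that $B$ is homologically smooth over $R$, and second that $\eta \in \HC^-_d(B)$ is non-degenerate. Both facts will be obtained by Nakayama-style descent from the corresponding properties of $(A,\eta_0)$, exploiting the $R$-flatness of $B$ together with the nilpotence of $m$ (guaranteed by finite-dimensionality of $R$). The general principle I would establish up front is the following: any bounded complex $C^\bullet$ of $R$-flat modules with $C^\bullet \otimes_R k$ acyclic is itself acyclic. One proves this by devissage along the filtration $R \supset m \supset m^2 \supset \cdots$, using the short exact sequences $0 \to m^i/m^{i+1} \to R/m^{i+1} \to R/m^i \to 0$, the observation that $m^i/m^{i+1}$ is a $k$-vector space (so that tensoring gives a sum of shifts of $C^\bullet \otimes_R k$, hence acyclic), and finally that $m^N = 0$ for $N \gg 0$.

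For homological smoothness, fix a finite resolution $P^\bullet \to A$ by finitely generated free $A^e$-modules (which exists since $A$ is $d$-Calabi-Yau). Since $B^e = B \otimes_R B^\circ$ is $R$-flat and reduces modulo $m$ to $A^e$, a standard obstruction-free lifting (each $\tilde P^i$ is free so $\operatorname{Ext}^1$-obstructions vanish) produces a bounded complex $\tilde P^\bullet$ of finitely generated free $B^e$-modules together with a map $\tilde P^\bullet \to B$ whose reduction modulo $m$ is $P^\bullet \to A$. The cone of $\tilde P^\bullet \to B$ is a bounded complex of $R$-flat modules whose mod-$m$ reduction is acyclic; by the principle above it is itself acyclic, so $\tilde P^\bullet \to B$ is a finite free resolution and $B$ is perfect over $B^e$.

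For non-degeneracy, by Lemma \ref{ref-5.3-18} the class $\pi(\eta) \in \HH_d(B)$ corresponds to a morphism $\eta^+ : \RHom_{B^e}(B, B^e) \to \Sigma^{-d} B$ in $D(B^e)$, and what we need to show is that this is an isomorphism. Representing $\RHom_{B^e}(B, B^e)$ by $\Hom_{B^e}(\tilde P^\bullet, B^e)$, a bounded complex of finitely generated projective (hence $R$-flat) $B^e$-modules, one can realize $\eta^+$ at the cochain level. Naturality of the isomorphism in Lemma \ref{ref-5.3-18} under base change along $R \to k$, combined with the relation $s(\eta) = \eta_0$ and the compatibility of $\pi$ with $s$, shows that the reduction of $\eta^+$ modulo $m$ is $\eta_0^+$, which is an isomorphism because $(A,\eta_0)$ is Calabi-Yau. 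A final application of the devissage principle to the cone of $\eta^+$ then yields that $\eta^+$ itself is a quasi-isomorphism in $D(B^e)$.

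The main technical obstacle is verifying the naturality of the isomorphism in Lemma \ref{ref-5.3-18} carefully enough to conclude that the mod-$m$ reduction of $\eta^+$ really is $\eta_0^+$. The proof of that lemma proceeds by reducing to the case $M = B^e$, so one must track how the explicit isomorphism for $M = B^e$ interacts with the base change $B^e \otimes_R k = A^e$, and similarly for the projection $\pi$ from the negative cyclic complex to the Hochschild complex. This is essentially a diagram chase, but one has to invoke the $R$-flatness of $B$ (and of the resolution $\tilde P^\bullet$) to guarantee that the relevant $\RHom$ and derived tensor products commute with the base change $- \otimes_R k$.
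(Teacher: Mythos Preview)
Your proposal is correct and follows essentially the same route as the paper's proof: lift a finite projective (free, in your version) $A^e$-resolution of $A$ to a $B^e$-resolution of $B$, then show the cone of $\eta^+$ vanishes by checking that its reduction modulo $m$ is the cone of $\eta_0^+$, which is zero. Your devissage principle for bounded complexes of $R$-flat modules is exactly the mechanism the paper invokes when it says ``if $H$ is perfect and $H\otimes^L k=0$ then $H=0$,'' and your explicit flagging of the base-change compatibility $\eta^+\otimes^L_R k\cong\eta_0^+$ is a point the paper simply asserts as easy.
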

\begin{proof} We have to show that $B$ is a perfect $B^e$-module and that $\eta$ induces
an isomorphism $\eta^+:\RHom_{B^e}(B,B^e)\rightarrow \Sigma^{-d} B$. 

Since $R$ is finite dimensional every flat $R$-module is
$R$-projective. This applies in particular to $B$ and $B^e$.  Let 
\[
0\rightarrow P_u\rightarrow\cdots\rightarrow P_0\rightarrow A\rightarrow 0
\]
be a finite resolution of $A$ by finitely generated projective $A^e$-modules. It is easy
to see that this resolution can be lifted step by step to a resolution
\[
0\rightarrow Q_u\rightarrow\cdots\rightarrow Q_0\rightarrow B\rightarrow 0
\]
where the $Q_i$ are finitely generated projective $B^e$-modules satisfying $Q_i\otimes_R k\cong P_i$. In particular $B$ is perfect. 

It also follows that $H=\operatorname{cone}\eta^+$ is perfect. 
It it easy to that $\eta^+\otimes^L k\cong \eta_0^+$ and hence
$(\operatorname{cone}\eta^+)\otimes^L k\cong \operatorname{cone}(\eta^+\otimes^L k)\cong 
\operatorname{cone}\eta_0^+=0$. If now
suffices to note that if $H$ is perfect and $H\otimes^L k=0$ then $H=0$.
\end{proof}
We now introduce a variant of the groupoid $\Def_{A,\eta_0}(R)$ which is easier to describe
 cohomologically. We remind the reader of the base change convention exhibited in \S\ref{ref-4.5-15} which we will use throughout.
As in \S\ref{ref-4.2-4} let $-\mu_0\in \mathfrak{C}^1(A)$ be the 
multiplication map  on $A$ and let $\hat{\eta}_0$ be a lift of $\eta_0$ to $\overline{\CC}^{\,-}_d(A)$. We
define an associated groupoid $\Def^\flat_{A,\hat{\eta}_0}(R)$ as follows. The objects
are couples $(\mu,\eta)$ where
\begin{enumerate}
\item $\mu\in \mathfrak{C}^1(A)\otimes_k R$ is such that $-\mu$ defines a unital associative multiplication
on $A\otimes_k R$;
\item $\mu\mod m=\mu_0$;
\item $\eta\in \overline{\CC}^{\,-}_d(A)\otimes_k R$;
\item $(L_\mu+u\mathsf{B})(\eta)=0$;
\item $\eta\mod m=\hat{\eta}_0$.
\end{enumerate}
For (4)\ recall that $L_{\mu}+u\mathsf{B}$ is the cyclic differential for
the algebra $(A\otimes_k R, \mu)$.
A morphism $(\mu_1,\eta_1)\rightarrow (\mu_2,\eta_2)$ in $\Def^\flat_{A,\mu_0,\hat{\eta}_0}(R)$ is a couple 
$(\phi,\xi)$ where 
\begin{enumerate}
\item
$\phi$ is a unital map of $R$-algebras $\phi: (A\otimes_k R, -\mu_1) \mor (A\otimes_k R, -\mu_2)$;
\item $\phi$ is the identity modulo $m$;
\item $\xi$ is an element of $\overline{\CC}^{\,-}_{d+1}(A)\otimes_k m$;
\item \label{ref-4-28}
$
(L_{\mu_2}+u\mathsf{B})(\xi)=\phi(\eta_1)-\eta_2
$.
\end{enumerate}

The composition of morphisms
\[
(\mu_1,\eta_1)\xrightarrow{(\phi',\xi')} (\mu_2,\eta_2)
\xrightarrow{(\phi,\xi)}(\mu_2,\eta_2)
\]
is defined by
\begin{equation}
\label{ref-6.1-29}
(\phi,\xi) \circ
(\phi',\xi')= (\phi \circ \phi',\phi(\xi')+\xi)
\end{equation}
Below we will often use the notation $\bar{\eta}$ for the cohomology
class of a cocycle $\eta$.
\begin{proposition}
\label{ref-6.2-30}
The morphism of groupoids
\begin{align*}
  \operatorname{Ob}(\Def^\flat_{A,\hat{\eta}_0}(R))\mor 
\operatorname{Ob}(\Def_{A,\eta_0}(R))&:(\mu,\eta)\mapsto ((A\otimes_k R,-\mu),``\operatorname{mod} m",\bar{\eta})\\
  \operatorname{Mor}(\Def^\flat_{A,\hat{\eta}_0}(R))\mor
  \operatorname{Mor}(\Def_{A,\eta_0}(R))&:(\phi,\xi)\mapsto \phi
\end{align*}
is essentially surjective on objects and surjective on morphisms. 
\end{proposition}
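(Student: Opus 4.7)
The plan is to verify essential surjectivity on objects and surjectivity on morphisms separately, using two ingredients: (i) standard lifting of $R$-module structures over the Artinian local ring $R$, and (ii) the Calabi-Yau vanishing $\HC^-_{d+1}(A)=0$ supplied by Proposition~\ref{ref-5.7-25}.

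\emph{For essential surjectivity on objects.} Given $(B,s,\eta)\in \Def_{A,\eta_0}(R)$ I would first pick an $R$-linear isomorphism $\alpha:B\to A\otimes_k R$ lifting $s$; this exists by the standard lifting lemma for flat modules over a local Artinian ring. Transporting the multiplication of $B$ along $\alpha$ produces $\mu\in\mathfrak{C}^1(A)\otimes_k R$ that reduces to $\mu_0$ modulo $m$. One must then upgrade the cohomology class $\alpha_\ast(\eta)\in\HC^-_d(A\otimes_k R,-\mu)$ to an honest cycle whose reduction modulo $m$ equals $\hat\eta_0$ on the nose. Starting from any cycle representative $\tilde\eta_1$, the difference $\tilde\eta_1\bmod m-\hat\eta_0$ is a boundary in $(\overline{\CC}^{\,-}_\bullet(A),L_{\mu_0}+u\mathsf{B})$; subtracting from $\tilde\eta_1$ the $(L_\mu+u\mathsf{B})$-boundary of any lift of the corresponding primitive produces the required cycle $\tilde\eta$. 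Then $(\mu,\tilde\eta)\in\Def^\flat_{A,\hat\eta_0}(R)$ and $\alpha^{-1}$ exhibits an isomorphism between its image and $(B,s,\eta)$.

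\emph{For surjectivity on morphisms.} Given a morphism $\phi$ between the images of $(\mu_1,\eta_1)$ and $(\mu_2,\eta_2)$, the fact that $\phi$ sends $\bar\eta_1$ to $\bar\eta_2$ furnishes some $\xi'\in\overline{\CC}^{\,-}_{d+1}(A)\otimes_k R$ with $(L_{\mu_2}+u\mathsf{B})\xi'=\phi(\eta_1)-\eta_2$. The additional condition $\xi\in\overline{\CC}^{\,-}_{d+1}(A)\otimes_k m$ required for morphisms of $\Def^\flat$ is \emph{not} automatic for $\xi'$. However, the right-hand side reduces to $\hat\eta_0-\hat\eta_0=0$ modulo $m$, so $\xi'\bmod m$ is a $(L_{\mu_0}+u\mathsf{B})$-cycle in $\overline{\CC}^{\,-}_{d+1}(A)$. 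Proposition~\ref{ref-5.7-25} now enters decisively: $\HC^-_{d+1}(A)=0$, so $\xi'\bmod m=(L_{\mu_0}+u\mathsf{B})\omega_0$ for some $\omega_0$, and replacing $\xi'$ by $\xi:=\xi'-(L_{\mu_2}+u\mathsf{B})\omega$ for any lift $\omega\in\overline{\CC}^{\,-}_{d+2}(A)\otimes_k R$ of $\omega_0$ produces the desired $\xi$.

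\emph{Main obstacle.} The delicate point in both halves is that $\Def^\flat$ remembers specific cocycles with a prescribed reduction modulo $m$, whereas $\Def$ is purely cohomological. For objects this discrepancy is resolved by a harmless chain-level correction, but for morphisms the correction requires the modulo-$m$ reduction of the primitive $\xi'$ to itself be a boundary. This is exactly what the Calabi-Yau vanishing $\HC^-_{d+1}(A)=0$ guarantees, and producing this vanishing is the single non-routine input of the proof.
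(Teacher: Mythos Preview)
Your proposal is correct and follows essentially the same route as the paper's proof: trivialize $B$ as an $R$-module to get $\mu$, pick a cycle representative for the transported class, and for morphisms use $\HC^-_{d+1}(A)=0$ from Proposition~\ref{ref-5.7-25} to push the primitive $\xi'$ into $\overline{\CC}^{\,-}_{d+1}(A)\otimes_k m$. If anything you are slightly more careful than the paper in the essential-surjectivity step, where you explicitly correct the chosen cycle so that its reduction modulo $m$ equals $\hat\eta_0$ on the nose; the paper leaves this adjustment implicit.
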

\begin{proof} We first prove essential surjectivity. Let $(B,s,\psi)
\in \Def_{A,\eta_0}(R)$. Then since $R$ is finite dimensional local and $B$ is $R$-flat we have an isomorphism
$B\cong A\otimes_k R$ as $R$-modules and it is easy to see that
this isomorphism may be chosen to make the following diagram commutative
\[
\xymatrix{
B\ar[rr]^\phi\ar[dr]_s && A\otimes_k R \ar[dl]^{\mod m}\\
& A
}
\]
We now transfer the multiplication on $B$ to $A\otimes_k R$ where it
becomes an element of $-\mu\in\mathfrak{C}^1(A)\otimes_k R$ which modulo $m$ is
equal to $-\mu_0$. We do the same with $\psi\in \HC^-_d(B)$
and we choose an element $\eta\in \overline{\CC}^{\,-}_d(A)\otimes_k
R$ such that $(L_\mu+u\mathsf{B})(\eta)=0$, $\bar{\eta}=\phi(\psi)$.
Thus in $\Def_{A,\eta_0}(R)$ we have
\[
(B,s,\psi)\cong ((A\otimes_k R,-\mu),-\operatorname{mod} m,\bar{\eta})
\]
This proves essential surjectivity.
Now we prove surjectivity on morphisms. Let $(\mu_1,\eta_1)$, $(\mu_2,\eta_2)\in
\operatorname{Ob}(\Def^\flat_{A,\hat{\eta}_0}(R))$ and let $\phi$ be a unital algebra morphism
\[
(A\otimes_k R,-\mu_1)\mor (A\otimes_k R,-\mu_2)
\]
inducing the identity modulo $m$ and satisfying $\phi(\bar{\eta}_1)=
\bar{\eta}_2$.

It follows that $\phi(\eta_1)-\eta_2$ is a boundary in the negative
cyclic complex of $(A\otimes_k R,-\mu_2)$. In other words there exists
$\xi\in \overline{\CC}_{d+1}^{\,-}(A)\otimes_k R$ such that 
\[
\phi(\eta_1)-\eta_2=(L_{\mu_2}+u\mathsf{B})(\xi)
\]
We have to show that we may choose $\xi\in \overline{\CC}_{d+1}^{\,-}(A)\otimes_k m$

Since $\phi$ is the identity modulo $m$ we have
\begin{align*}
\phi(\eta_1)-
\eta_2 \mod m&=\eta_1-\eta_2\mod m\\
&=\hat{\eta}_0-\hat{\eta}_0\\
&=0
\end{align*}
It follows that $\phi(\eta_1)-\eta_2\in \overline{\CC}^{\,-}_d(A)\otimes_k m$ 
 and hence $d\xi\mod m=0$. Since $\HC^-_{d+1}(A)=0$ by Proposition \ref{ref-5.7-25} we see that there exists
$\gamma \in \overline{\CC}^{\,-}_{d+2}(A)\otimes_k R$ such that $(L_{\mu_2}+u\mathsf{B})(\gamma)\cong \xi\mod m$.  In other words
\[
\xi'=\xi-(L_{\mu_2}+u\mathsf{B})(\gamma)\in \overline{\CC}^{\,-}_d(A)\otimes_k m
\]
Then the couple $(\phi,\xi')$ is a pre-image for $\phi$.
\end{proof}
For completeness we state the following.
\begin{proposition}
\label{ref-6.3-31}
Let $\hat{\eta}_0'\in \overline{\CC}^{\,-}_0(A)$ be a different lift of $\eta_0$. Then
$\Def^\flat_{A,\hat{\eta}'_0}(R)$ and $\Def^\flat_{A,\hat{\eta}_0}(R)$ are isomorphic.
\end{proposition}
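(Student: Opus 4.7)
The plan is to write down an explicit isomorphism of groupoids $F\colon \Def^\flat_{A,\hat{\eta}_0}(R)\mor \Def^\flat_{A,\hat{\eta}'_0}(R)$, using the fact that two lifts of the same cyclic homology class differ by an explicit boundary.

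First, since $\hat{\eta}_0$ and $\hat{\eta}'_0$ both represent $\eta_0\in \HC^-_d(A)$, there exists $\zeta\in \overline{\CC}^{\,-}_{d+1}(A)$ with
\[
\hat{\eta}'_0-\hat{\eta}_0=(L_{\mu_0}+u\mathsf{B})(\zeta).
\]
Extend $\zeta$ $R$-linearly to an element of $\overline{\CC}^{\,-}_{d+1}(A)\otimes_k R$ (still denoted $\zeta$). On objects I would set
\[
F(\mu,\eta)=\bigl(\mu,\,\eta+(L_{\mu}+u\mathsf{B})(\zeta)\bigr).
\]
To check this lands in $\Def^\flat_{A,\hat{\eta}'_0}(R)$ one verifies closedness (immediate from $(L_\mu+u\mathsf{B})^2=0$) and that reducing modulo $m$ replaces $\hat{\eta}_0$ by $\hat{\eta}_0+(L_{\mu_0}+u\mathsf{B})(\zeta)=\hat{\eta}'_0$.

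On morphisms $(\phi,\xi)\colon(\mu_1,\eta_1)\to(\mu_2,\eta_2)$, I would set
\[
F(\phi,\xi)=\bigl(\phi,\,\xi+\phi(\zeta)-\zeta\bigr).
\]
Three things need to be checked. (i) The new $\xi$-component lies in $\overline{\CC}^{\,-}_{d+1}(A)\otimes_k m$: this uses that $\phi\equiv \Id \mod m$, so $\phi(\zeta)-\zeta\in \overline{\CC}^{\,-}_{d+1}(A)\otimes_k m$. (ii) Equation (\ref{ref-4-28}) still holds for the new morphism: since $\phi$ is a map of $R$-algebras, it intertwines the cyclic differentials, i.e.\ $\phi\circ(L_{\mu_1}+u\mathsf{B})=(L_{\mu_2}+u\mathsf{B})\circ\phi$, and a short computation gives
\[
(L_{\mu_2}+u\mathsf{B})\bigl(\xi+\phi(\zeta)-\zeta\bigr)=\phi(\eta_1)-\eta_2+(L_{\mu_2}+u\mathsf{B})\bigl(\phi(\zeta)-\zeta\bigr)=\phi(F(\eta_1))-F(\eta_2),
\]
as required. (iii) Functoriality with respect to the composition rule (\ref{ref-6.1-29}): a direct expansion shows
\[
F(\phi,\xi)\circ F(\phi',\xi')=\bigl(\phi\phi',\,\phi(\xi')+\xi+\phi\phi'(\zeta)-\zeta\bigr)=F\bigl((\phi,\xi)\circ(\phi',\xi')\bigr),
\]
the cross terms $\pm\phi(\zeta)$ cancelling.

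Finally, running the same construction with $-\zeta$ (which witnesses $\hat{\eta}_0-\hat{\eta}'_0=(L_{\mu_0}+u\mathsf{B})(-\zeta)$) produces a functor $G$ in the opposite direction, and the formulas make it obvious that $F\circ G$ and $G\circ F$ are the identity on both objects and morphisms. I expect the main (though still purely bookkeeping) obstacle to be checking compatibility with the composition of morphisms, since one must track how $\phi$ transports $\zeta$; the calculation works out precisely because $\phi$ is an algebra morphism and thus commutes with $L_\mu+u\mathsf{B}$.
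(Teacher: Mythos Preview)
Your proof is correct and is precisely the ``direct'' argument the paper alludes to but chooses not to write out. The paper instead postpones the proof to \S\ref{ref-8-36} and derives it from a stronger statement: it shows that the controlling DG-Lie algebras $\mathfrak{D}^\bullet(A,\hat{\eta}_0)$ and $\mathfrak{D}^\bullet(A,\hat{\eta}'_0)$ are isomorphic, by exhibiting a gauge transformation $\exp(0,s^{-d-1}\xi)$ in $G(\mathfrak{D}^\bullet(A)^\sharp)$ carrying the Maurer-Cartan element $(0,s^{-d-1}\hat{\eta}_0)$ to $(0,s^{-d-1}\hat{\eta}'_0)$; the isomorphism of $\Def^\flat$-groupoids then follows by combining this with Theorem~\ref{ref-8.1-37}.

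Your approach is more elementary and entirely self-contained within \S\ref{ref-6-27}: it never invokes the Maurer-Cartan formalism or the construction of $\mathfrak{D}^\bullet(A,\eta)$. The paper's route, on the other hand, yields the stronger conclusion that the DG-Lie algebras themselves are isomorphic (not merely their associated deformation groupoids), and it makes transparent that the dependence on the lift $\hat{\eta}_0$ is controlled by the gauge group. Both arguments ultimately rest on the same ingredient, namely the choice of a primitive $\zeta$ (your notation) or $\xi$ (the paper's) for $\hat{\eta}'_0-\hat{\eta}_0$; the paper packages the resulting shift as a gauge action, while you unpack it as an explicit functor on objects and morphisms.
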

We could easily prove this here directly, however we will postpone the
proof till \S\ref{ref-8-36} where we reinterprete
$\Def^\flat_{A,\hat{\eta}_0}(R)$ in terms of the Maurer-Cartan equation.

\section{The Maurer-Cartan formalism}
\label{ref-7-32}
In this section we briefly recall
the construction of the deformation functor associated to a DG-Lie algebra.

\medskip

Let $\f{h}^\b$ be a DG-Lie algebra over $k$.  The set 
\[
\MC(\mathfrak{h}^\bullet)\overset{\text{def}}{=}\left\{y \in \f{h}^1 \, \left\vert
dy +\frac{1}{2}[y,y]=0 \right.\right\}
\]
is the set of solutions to the \emph{Maurer-Cartan equation} in
$\f{h}^\b$. It has a natural structure of a groupoid which we now describe.

Assume that $\mathfrak{n}$ is a nilpotent Lie algebra and let
$\widehat{U}(\mathfrak{n})$ be the enveloping algebra of $\mathfrak{n}$, completed
at the augmentation ideal. Then the group $\exp(\mathfrak{n})$ is by definition
the set of group like elements in $\widehat{U}(\mathfrak{n})$.
 It is well-known and easy to see that there is a bijection
\[
\exp:\mathfrak{n}\mor \exp(\mathfrak{n}):n\mapsto e^n
\]
between the primitive and the group like elements in $\widehat{U}(\mathfrak{n})$.

Now assume that $\f{h}^\b$ is nilpotent. Then
$\widehat{U}(\mathfrak{h}^0)$ acts on the graded Lie algebra~$\mathfrak{h}^\bullet$ using the adjoint action and hence so does the
\emph{gauge group}
$G(\mathfrak{h}^\bullet)\overset{\text{def}}{=}\exp(\mathfrak{h}^0)$. This
action does not commute with the differential and in particular it does not preserve $\MC(\mathfrak{h}^\bullet)$. However the following
modified \emph{gauge action} does:
\begin{equation}
\label{ref-7.1-33}
\begin{aligned}
\exp(x)\ast y&\overset{\text{def}}{=}e^{\ad x}(y)-\frac{e^{\Ad x}-1}{\Ad x}(dx)\\
&=e^{\Ad x}(y)-\sum_{n=0}^\infty \frac{1}{(n+1)!} (\Ad x)^n (dx)
\end{aligned}
\end{equation}
where $x\in\mathfrak{h}^0$, $y\in \mathfrak{h}^1$ and
$(\ad x)(u)=[x,u]$.

An elegant derivation of this action is given by Manetti~\cite[\S
V.4]{Manetti}. One first formally adjoins an element $\delta$ of
degree one to $\mathfrak{h}^\bullet$ such that $dx=[\delta,x]$,
$d\delta=0$ and $[\delta,\delta]=0$. Then \eqref{ref-7.1-33} can be
rewritten as:
\begin{equation}
\label{ref-7.2-34}
\exp(x)\ast y=e^{\ad x}(y+\delta)-\delta
\end{equation}
This action preserves $\MC(\mathfrak{h}^\bullet)$ since for $y\in\mathfrak{h}^1$:
\[
y\in\MC(\mathfrak{h}^\bullet)\iff [y+\delta,y+\delta]=0
\]
In the sequel we view $\MC(\mathfrak{h}^\bullet)$ as a groupoid through
the $G(\mathfrak{h}^\bullet)$-action.

\medskip

If $y\in \MC(\mathfrak{h}^\bullet)$ then by definition $\mathfrak{h}^\bullet_y$ is
the DG-Lie algebra which is $\mathfrak{h}^\bullet$ as graded Lie algebra but which
has the deformed differential $d_y=d+[y,-]$. Using \eqref{ref-7.2-34} one easily
shows that for $x\in \mathfrak{h}^0$.
\begin{equation}
\label{ref-7.3-35}
e^{\ad x}:\mathfrak{h}^\bullet_y\mor \mathfrak{h}^\bullet_{\exp(x)\ast y}
\end{equation}
is an isomorphism of DG-Lie algebras 

\medskip

Assume $(R,m)\in\Nilp$. Given an arbitrary DG-Lie algebra 
$\mathfrak{g}^\bullet$ over $k$, the vector space $\f{g}^\b \tr_k m$ becomes a
nilpotent DG-Lie algebra. We define $\MCcal(R)$ as $\MC(\mathfrak{g}^\bullet\otimes_k m)$ equipped with the groupoid structure introduced above. 
In this way we obtain a pseudo-functor $\MCcal:\Nilp\mor \Gd$. This
is the ``deformation functor'' associated to $\mathfrak{g}^\bullet$.

\section{The DG-Lie algebra {\mathversion{bold} $\mathfrak{D}^\bullet(A,\eta)$}}
\label{ref-8-36}
Below $(A,\bar{\eta}_0)$ is a $d$-Calabi-Yau $k$-algebra where
$\eta_0\in \overline{\CC}_d^-(A)$ satisfies $(L_{\mu_0}+u\mathsf{B})(\eta_0)=0$, with
$-\mu_0\in\mathfrak{C}^1(A)$ being the multiplication on $A$. 
In this section we associate a DG-Lie algebra  $\mathfrak{D}^\bullet(A,\eta_0)$ to $A$
and prove that its deformation functor (see \S\ref{ref-7-32}) is isomorphic to the 
functor $\Def^\flat_{A,\eta_0}$ introduced
in~\S\ref{ref-6-27}.

\medskip

If $\f{g}^\b$ is a DG-Lie algebra and $M^\b$ a $\f{g}^\b$-module then the direct sum complex $\f{g}^\b \ds M^\b$ becomes a DG-Lie algebra when endowed with the following bracket:
\[
[(g,m),(g',m')]:=([g,g'], gm'-(-1)^{\vert g'\vert \vert m\vert} g'm)
\]
The resulting DG-Lie algebra is called the \emph{semi-direct product} 
of $\mathfrak{g}^\bullet$ and $M^\bullet$ and is denoted by $\f{g}^\b \ltimes M^\b$

By \eqref{ref-4.3-8}\eqref{ref-4.7-12} (see also \S\ref{ref-4.6-16}) we have a DG-Lie action 
\[
\bar{\f{C}}^\b(A)\times \Sigma^{-d-1}\overline{\CC}_\b^-(A)\mor \Sigma^{-d-1}\overline{\CC}_\b^-(A):(x,\eta)\mapsto L_x\eta
\]
and  we can form the corresponding semi-direct product
$\mathfrak{D}^\bullet(A)^\sharp=\bar{\f{C}}^\b(A) \ltimes \Sigma^{-d-1}\overline{\CC}_\b ^-(A)$.

The element $x=(0,s^{-d-1}\eta_0)\in \mathfrak{D}^\bullet(A)^\sharp$ satisfies $dx=0$
and $[x,x]=0$. So it satisfies the Maurer-Cartan equation. Put
$\mathfrak{D}^\bullet(A,\eta_0)=\mathfrak{D}^\bullet(A)_x^\sharp$, with notation
as in \S\ref{ref-7-32}.

\begin{theorem}\label{ref-8.1-37}
Let $(R,m)\in \Nilp$. There is an isomorphism of groupoids
\[
\Phi(R):\MC(\f{D}^\b(A,\eta_0)\otimes_k m)  \mor \Def^\flat_{A,\eta_0}(R)
\]
which on objects is given by
\begin{equation}
\label{ref-8.1-38}
(\mu,s^{-d-1}\eta)\mapsto(\mu_0+\mu,\eta_0+\eta)
\end{equation}
\end{theorem}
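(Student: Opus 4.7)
The plan is to unpack the Maurer--Cartan equation and the gauge action in $\mathfrak{D}^\bullet(A,\eta_0) = \mathfrak{D}^\bullet(A)^\sharp_x$, with $x = (0, s^{-d-1}\eta_0)$, and check that they correspond term by term to the data defining $\Def^\flat_{A,\hat\eta_0}(R)$. A first useful observation is that $y$ is Maurer--Cartan in the twisted DG-Lie algebra if and only if $y+x$ is Maurer--Cartan in the untwisted $\mathfrak{D}^\bullet(A)^\sharp$, because $dx=0$ (from $(\mathsf{b}+u\mathsf{B})\eta_0=0$) and $[x,x]=0$ (immediate from the semi-direct product bracket).

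For objects, I would decompose the MC equation for $y=(\mu, s^{-d-1}\eta) \in \mathfrak{D}^\bullet(A,\eta_0)^1\otimes_k m$ along the two summands of the semi-direct product. The Hochschild part yields $d\mu+\tfrac{1}{2}[\mu,\mu]=0$, which by \eqref{ref-4.1-5} and $[\mu_0,\mu_0]=0$ rewrites as $[\mu_0+\mu,\mu_0+\mu]=0$, i.e.\ associativity of $-(\mu_0+\mu)$. The cyclic part, after substituting $L_{\mu_0}=\mathsf{b}$ and using $(\mathsf{b}+u\mathsf{B})\eta_0=0$, collapses to $(L_{\mu_0+\mu}+u\mathsf{B})(\eta_0+\eta)=0$. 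Unitality of $-(\mu_0+\mu)$ and the reduction-mod-$m$ conditions are automatic because $\mu$ and $\eta$ lie in $\bar{\mathfrak{C}}^1(A)\otimes_k m$ and $\overline{\CC}^{\,-}_d(A)\otimes_k m$ respectively. This establishes that $\Phi(R)$ is a bijection on objects, with inverse $(\mu',\eta')\mapsto(\mu'-\mu_0, s^{-d-1}(\eta'-\eta_0))$.

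For morphisms, an arrow $y_1\to y_2$ in $\MC(\mathfrak{D}^\bullet(A,\eta_0)\otimes_k m)$ is represented by $w=(z, s^{-d-1}\xi)\in \mathfrak{D}^\bullet(A,\eta_0)^0\otimes_k m$, with $z\in\bar{\mathfrak{C}}^0(A)\otimes_k m$ and $\xi\in\overline{\CC}^{\,-}_{d+1}(A)\otimes_k m$. I would apply Manetti's formula \eqref{ref-7.2-34} in the form $\exp(w)\ast y = e^{\ad w}(y+\delta_x)-\delta_x$, where $\delta_x=\delta+x$ encodes the twisted differential. The Hochschild component is insensitive to the cyclic data and reproduces the classical fact that $\phi:=e^z$ defines a unital $R$-algebra isomorphism $(A\otimes_k R,-(\mu_0+\mu_1))\to(A\otimes_k R,-(\mu_0+\mu_2))$ that reduces to the identity modulo $m$. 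The cyclic component, after using \eqref{ref-4.3-8} and \eqref{ref-4.7-12}, yields $\phi(\eta_0+\eta_1)-(\eta_0+\eta_2)=(L_{\mu_0+\mu_2}+u\mathsf{B})(\xi)$, exactly condition (4) of a morphism in $\Def^\flat_{A,\hat\eta_0}$. Finally, composition of morphisms in $\MC$ is Baker--Campbell--Hausdorff in the gauge group, and since this group has the semi-direct product structure $\exp(\bar{\mathfrak{C}}^0(A)\otimes_k m)\ltimes(\overline{\CC}^{\,-}_{d+1}(A)\otimes_k m)$, composition matches \eqref{ref-6.1-29}: algebra automorphisms compose on the Hochschild side, and the semi-direct product produces the correction $\phi(\xi')+\xi$ on the cyclic side.

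The main obstacle I expect is the explicit evaluation of $e^{\ad w}(y+\delta_x)$ on the cyclic component, where the infinite series must telescope to a single coboundary. This is cleanest in the $\delta$-formalism: working inside $(\mathfrak{D}^\bullet(A)^\sharp\oplus k\delta)\otimes_k m$, the whole verification becomes a purely graded-Lie-algebraic manipulation. The remaining task is sign bookkeeping for the shift $\Sigma^{-d-1}$, following the conventions of \S\ref{ref-4.6-16}.
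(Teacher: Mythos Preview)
Your treatment of objects is correct and coincides with the paper's Lemma~\ref{ref-8.3-40} essentially verbatim.  The discrepancy is in the handling of morphisms.

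You parametrize a gauge element by a single $w=(z,s^{-d-1}\xi)$ via $\exp(w)$ and assert that the cyclic component of the gauge relation reads
$\phi(\eta_0+\eta_1)-(\eta_0+\eta_2)=(L_{\mu_0+\mu_2}+u\mathsf{B})(\xi)$ with the \emph{same} $\xi$.  This is not what one gets: the series $e^{\ad w}$ mixes $z$ and $\xi$ at every order, and after disentangling them the coboundary is $(L_{\mu_0+\mu_2}+u\mathsf{B})(\tilde\xi)$ with $\tilde\xi=\frac{e^{L_z}-1}{L_z}\,\xi$, not $\xi$.  (Check the extreme case $z\neq 0$, $\xi\neq 0$ to first order.)  For the same reason, composition is not transparent in the coordinates $w\mapsto\exp(w)$: the BCH product of $(z_1,\xi'_1)$ and $(z_2,\xi'_2)$ does not produce $\xi'_1+e^{z_1}\xi'_2$ in the second slot, so matching~\eqref{ref-6.1-29} directly fails.

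The paper never works with $\exp(w)$.  Lemma~\ref{ref-8.4-41} writes every gauge element uniquely as a product $\exp(0,s^{-d-1}\xi)\exp(f,0)$, and Lemma~\ref{ref-8.5-42} computes the action of each factor separately on a semi-direct product with twisted differential.  In these coordinates the cyclic condition comes out exactly as condition~(4), the map on morphisms is $\exp(0,s^{-d-1}\xi)\exp(f,0)\mapsto(e^f,(-1)^d\xi)$ (note the sign coming from the shift $s^{-d-1}$, which your proposal omits), and composition is a two-line check using $\exp(g,0)\exp(0,\xi')=\exp(0,e^{g}\xi')\exp(g,0)$.  The semi-direct product structure you invoke at the end is precisely this factorization; but it has to be used from the outset to parametrize morphisms, not only for composition.

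So your strategy is the paper's strategy, but the step you flag as ``the main obstacle'' really is one for the single-exponential parametrization; the factored parametrization (Lemmas~\ref{ref-8.4-41} and~\ref{ref-8.5-42}) is the device that makes both the morphism condition and the composition law come out on the nose.
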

\begin{corollary}\label{ref-8.2-39}
There is a natural transformation of pseudo-functors
\[
\Phi:\mathcal{MC}(\f{D}^\b(A,\eta_0)) \mor \Def^\flat_{A,\eta_0}
\]
which, when evaluated on $R\in \Nilp$, is an isomorphism of groupoids.
\end{corollary}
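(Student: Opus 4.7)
The plan is to deduce this corollary from Theorem \ref{ref-8.1-37}; the only missing ingredient is naturality of $\Phi(R)$ in $R$. Both source and target are in fact honest functors $\Nilp\mor\Gd$: the source $R\mapsto \MC(\mathfrak{D}^\bullet(A,\eta_0)\otimes_k m)$ is functorial via $\Id\otimes f$ for a morphism $f\colon (R,m)\to (R',m')$ in $\Nilp$, and $\Def^\flat_{A,\eta_0}$ is functorial by applying $\Id\otimes f$ to the datum $(\mu,\eta)$ and to the morphism datum $(\phi,\xi)$. Here I use the base change identifications of \S\ref{ref-4.5-15} for $\bar{\mathfrak{C}}^\bullet(A)\otimes_k R$ and $\overline{\CC}^{\,-}_\bullet(A)\otimes_k R$, which are valid because $R$ is finite dimensional.

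Given such an $f$, I would verify naturality by comparing the two composites $\Phi(R')\circ(\Id\otimes f)$ and $(\Id\otimes f)\circ \Phi(R)$ on objects and on morphisms. On objects, formula \eqref{ref-8.1-38} shows that both composites send $(\mu, s^{-d-1}\eta)$ to the pair $(\mu_0+(\Id\otimes f)(\mu),\, \eta_0+(\Id\otimes f)(\eta))$, so agreement is immediate. For morphisms, I would trace through the gauge-action formula \eqref{ref-7.1-33}: it is a formal power series in the adjoint action, hence $k$-linear, hence commutes with $\Id\otimes f$. Combined with the composition formula \eqref{ref-6.1-29}, this gives naturality on morphisms. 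Together with the fact — supplied by Theorem \ref{ref-8.1-37} — that each $\Phi(R)$ is already a bijection on objects and an isomorphism on hom-sets, we obtain the desired natural transformation with every component an isomorphism.

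There is no serious obstacle here. The entire argument is functorial bookkeeping resting on the $k$-linearity of every construction involved (the bracket on $\mathfrak{D}^\bullet(A,\eta_0)$, the Lie derivative $L_x$, the Connes differential $\mathsf{B}$, and the gauge action), together with the tautological functoriality of $R\mapsto m$ on $\Nilp$.
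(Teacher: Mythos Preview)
Your proposal is correct and matches the paper's approach: the paper states the corollary immediately after Theorem~\ref{ref-8.1-37} without a separate proof, treating the naturality in $R$ as evident from the explicit formulas \eqref{ref-8.1-38} and \eqref{ref-8.3-45}. Your write-up simply spells out this routine bookkeeping in more detail than the paper does.
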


We shall prove Theorem \ref{ref-8.1-37} by combining some lemmas. Throughout we fix $(R,m)\in \Nilp$. The following lemma says that $\Phi(R)$ behaves correctly on objects.
\begin{lemma}\label{ref-8.3-40}
Let $\mu \in \bar{\mathfrak{C}}^\bullet_1(A)\otimes_k m$ and $\eta \in \overline{\CC}^{\,-}_d(A)\otimes_k m$. The following are equivalent:
\begin{enumerate}
\item
$(\mu,s^{-d-1}\eta) \in \MC(\f{D}^\b(A,\eta_0)\otimes_k m)$;
\item
$(\mu_0+\mu,\eta_0+\eta)\in \Def^\flat_{A,\eta_0}(R)$.
\end{enumerate}
\end{lemma}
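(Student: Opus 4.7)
The plan is to expand the Maurer-Cartan equation for $y=(\mu,s^{-d-1}\eta)$ in $\mathfrak{D}^\bullet(A,\eta_0)\otimes_k m$ component by component and recognise the two resulting identities as the associativity condition on $-(\mu_0+\mu)$ and the cyclic-cocycle condition on $\eta_0+\eta$. Both $\mu$ and $s^{-d-1}\eta$ have cohomological degree $1$, as required for an MC element. The twist by $x:=(0,s^{-d-1}\eta_0)$ is legitimate because the hypothesis $(L_{\mu_0}+u\mathsf{B})(\eta_0)=0$ is exactly what makes $x$ satisfy the Maurer-Cartan equation in the untwisted DG-Lie algebra $\mathfrak{D}^\bullet(A)^\sharp$: the bracket $[x,x]$ vanishes trivially on each component, while $dx=0$ amounts to the cyclic-cocycle condition on $\eta_0$.

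Unwinding the semi-direct product bracket from \S\ref{ref-8-36} — where the action of $\bar{\mathfrak{C}}^\bullet(A)$ on the shifted cyclic chains is the Lie derivative $L_{(-)}$ — one computes
\[
\tfrac{1}{2}[y,y]=\bigl(\tfrac{1}{2}[\mu,\mu],\ L_\mu(s^{-d-1}\eta)\bigr),\qquad [x,y]=\bigl(0,\ L_\mu(s^{-d-1}\eta_0)\bigr),
\]
while the untwisted differential $dy$ equals $\bigl([\mu_0,\mu],\,(L_{\mu_0}+u\mathsf{B})(s^{-d-1}\eta)\bigr)$ by \eqref{ref-4.1-5} and the cyclic differential. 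The first component of $d_xy+\tfrac{1}{2}[y,y]=0$ then reads $[\mu_0,\mu]+\tfrac{1}{2}[\mu,\mu]=0$; using $\tfrac{1}{2}[\mu_0,\mu_0]=0$ (associativity of $A$) this is equivalent to $[\mu_0+\mu,\mu_0+\mu]=0$, i.e.\ to $-(\mu_0+\mu)$ defining an associative multiplication on $A\otimes_k R$. Unitality is inherited from $-\mu_0$ because $\mu$ lies in the normalized cochain complex tensored with $m$, and $\mu_0+\mu\equiv\mu_0\bmod m$ is tautological. So the first component of the MC equation yields conditions (1) and (2) of $\Def^\flat_{A,\eta_0}(R)$.

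The second component reads $L_\mu(s^{-d-1}\eta_0)+L_\mu(s^{-d-1}\eta)+(L_{\mu_0}+u\mathsf{B})(s^{-d-1}\eta)=0$. Adding the vanishing term $(L_{\mu_0}+u\mathsf{B})(s^{-d-1}\eta_0)=0$ and using linearity of $L_{(-)}$ in its subscript together with $u$-linearity of $\mathsf{B}$, this rearranges to
\[
(L_{\mu_0+\mu}+u\mathsf{B})\bigl(s^{-d-1}(\eta_0+\eta)\bigr)=0,
\]
which is exactly condition (4) of $\Def^\flat_{A,\eta_0}(R)$ after stripping the harmless shift (\S\ref{ref-4.6-16}). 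Conditions (3) and (5) are immediate from the setup. Every step above is reversible, so both implications follow. The whole argument is a bookkeeping exercise; the only subtle point — and the one that motivates the construction of $\mathfrak{D}^\bullet(A,\eta_0)$ — is that the legitimacy of the twisted differential $d_x$ is exactly the hypothesis placed on $\eta_0$.
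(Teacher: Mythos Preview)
Your proof is correct and follows essentially the same route as the paper's own argument: both expand the Maurer-Cartan expression $d_{\mathfrak{D}}(\mu,s^{-d-1}\eta)+\tfrac12[(\mu,s^{-d-1}\eta),(\mu,s^{-d-1}\eta)]$ component by component, identify the first component with $\tfrac12[\mu_0+\mu,\mu_0+\mu]$ via $[\mu_0,\mu_0]=0$, identify the second with $(L_{\mu_0+\mu}+u\mathsf{B})(s^{-d-1}(\eta_0+\eta))$ via $(L_{\mu_0}+u\mathsf{B})\eta_0=0$, and deduce unitality from $\mu$ being normalized. The only cosmetic difference is that you split off the twist $[x,-]$ explicitly whereas the paper folds it into $d_{\mathfrak{D}}$; the computations are otherwise line-for-line the same.
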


\begin{proof}
We will work out what it means for $(\mu,s^{-d-1}\eta) \in \f{D}^1(A,\eta_0)\otimes_k m$
to satisfy the Maurer-Cartan equation. To simplify the notations we write
$\eta'_0=s^{-d-1}\eta_0$, $\eta'=s^{-d-1}\eta$. We compute
\begin{align*}
\frac{1}{2}[(\mu,\eta'),(\mu,\eta')]+d_{\f{D}}(\mu,\eta')
&= \frac{1}{2}([\mu,\mu], 2L_\mu(\eta'))+([\mu_0,\mu], (L_{\mu_0}+u\mathsf{B})(\eta'))+[(0,\eta_0'), (\mu,\eta')]\\
&=\frac{1}{2}([\mu,\mu], 2L_\mu(\eta'))+([\mu_0,\mu], (L_{\mu_0}+u\mathsf{B})(\eta'))+
[0,L_{\mu}(\eta_0')]\\
&= (\frac{1}{2}[\mu,\mu]+[\mu_0,\mu],L_{\mu}(\eta')+(L_{\mu_0}+u\mathsf{B})(\eta')+ L_{\mu}(\eta_0'))\\
&=([\mu_0+\mu,\mu_0+\mu], (L_{\mu+\mu_0}+u\mathsf{B})(\eta'+\eta'_0))
\end{align*}
where in the last line we have used $[\mu_0,\mu_0]=0$,
$(L_{\mu_0}+u\mathsf{B})(\eta'_0)=0$. Thus if 
$(\mu_0+\mu,\eta_0+\eta)\in \Def^\flat_{A,\eta_0}(R)$ then 
$(\mu,s^{-d-1}\eta)\in  \MC(\f{D}^\b(A,\eta_0)\otimes_k m)$. To prove the
converse the only thing we still need to check is that $-(\mu_0+\mu)$ defines
a \emph{unital} multiplication on $A\otimes_k R$. This follows immediately
from the fact that $-\mu_0$ is unital and $\mu$ is normalized. 
\end{proof}
The next two lemmas will help us describing the gauge group action of
$G(\mathfrak{D}^\bullet(A,\eta))$.
\begin{lemma}\label{ref-8.4-41}
Let $\f{n}$ be a nilpotent Lie algebra over $k$ and let $M$ a  representation of $\f{n}$.
Then there is an isomorphism of groups
\[
\exp(\frak{n})\ltimes M\mor \exp(\frak{n}\ltimes M):
(\exp(n),m)\mapsto  \exp(n,0)\exp(0,m)
\]
\end{lemma}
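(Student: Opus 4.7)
The plan is to produce $\Phi$ from two natural inclusions and then check homomorphism and bijectivity.

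Construction. The Lie subalgebras $\f{n}\oplus 0$ and $0\oplus M$ of $\f{n}\ltimes M$ (the second one abelian, since $M$ has trivial self-bracket) give rise to two injective group homomorphisms
$\iota_1\colon\exp(\f{n})\hookrightarrow\exp(\f{n}\ltimes M)$, $\exp n\mapsto\exp(n,0)$, and
$\iota_2\colon(M,+)\hookrightarrow\exp(\f{n}\ltimes M)$, $m\mapsto\exp(0,m)$.
That $\iota_2$ is a homomorphism for the additive group structure on $M$ uses $[0\oplus M,0\oplus M]=0$, which yields $\exp(0,m)\exp(0,m')=\exp(0,m+m')$ via the BCH formula. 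One then sets $\Phi(\exp n,m)=\iota_1(\exp n)\iota_2(m)$.

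Homomorphism. The crucial identity is the adjoint formula
\[
\iota_1(\exp n)\,\iota_2(m)\,\iota_1(\exp n)^{-1}=\iota_2\bigl(\exp(n)\cdot m\bigr),
\]
obtained from $\operatorname{Ad}(\exp(n,0))=e^{\ad(n,0)}$ together with $[\ad(n,0)]^k(0,m)=(0,\rho(n)^k m)$, where $\rho\colon\f{n}\to\operatorname{End}(M)$ is the given representation. This identifies the inner action of $\iota_1(\exp\f{n})$ on $\iota_2(M)$ with the action of $\exp(\f{n})$ on $M$. Expanding $\Phi(g_1,m_1)\Phi(g_2,m_2)=\iota_1(g_1)\iota_2(m_1)\iota_1(g_2)\iota_2(m_2)$ and using the formula to move $\iota_2(m_1)$ past $\iota_1(g_2)$ then reproduces the product rule of the semi-direct product $\exp(\f{n})\ltimes M$.

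Bijectivity. By BCH (a finite sum by nilpotency of $\f{n}\ltimes M$),
\[
\Phi(\exp n,m)=\exp\bigl(\operatorname{BCH}((n,0),(0,m))\bigr)=\exp(n,\tilde m(n,m)),
\]
where every iterated bracket involving $(0,m)$ already lies in $0\oplus M$, so the $\f{n}$-component stays $n$ while $\tilde m(n,m)=m+\tfrac{1}{2}\rho(n)m+\cdots$ is polynomial in $n$ and linear in $m$. Composing with the set-bijection $\exp\colon\f{n}\ltimes M\xrightarrow{\cong}\exp(\f{n}\ltimes M)$ reduces bijectivity of $\Phi$ to that of $(n,m)\mapsto(n,\tilde m(n,m))$ on $\f{n}\oplus M$, which is clear from its triangular shape.

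The main obstacle is essentially bookkeeping: pinning down the convention for the group semi-direct product $\exp(\f{n})\ltimes M$ (in particular, on which side the action is written) so that the product rule produced by the computation above matches. Once this is fixed, the rest is a routine manipulation with BCH, and the rigidity of the $\f{n}$-component under BCH immediately gives bijectivity.
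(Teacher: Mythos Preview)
Your proof is correct and complete. It differs in approach from the paper's own argument: the paper gives a one-line proof invoking the algebra isomorphism $U(\mathfrak{n}\ltimes M)\cong U(\mathfrak{n})\ltimes\operatorname{Sym}(M)$ and reading off group-like elements in the completion, whereas you work directly with the Baker--Campbell--Hausdorff formula. Your route is more explicit and self-contained: the adjoint identity $\iota_1(\exp n)\iota_2(m)\iota_1(\exp n)^{-1}=\iota_2(e^{\rho(n)}m)$ gives the homomorphism property cleanly, and the observation that every iterated bracket involving $(0,m)$ lands in $0\oplus M$ yields the triangular form $(n,m)\mapsto(n,\tilde m(n,m))$ with $\tilde m(n,-)$ a unipotent linear operator, hence bijective. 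The paper's approach is terser but relies on the reader unpacking what group-like elements look like in the smash product of Hopf algebras; your BCH computation makes the mechanism visible. One small point worth noting (implicit in both proofs): for $\exp(\mathfrak{n}\ltimes M)$ to be defined via the paper's \S\ref{ref-7-32}, one needs $\mathfrak{n}\ltimes M$ itself nilpotent, i.e.\ the $\mathfrak{n}$-action on $M$ should be nilpotent---this holds in the paper's application since everything is tensored with a nilpotent ideal $m$.
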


\begin{proof}
  This is a straightforward verification from the definition of ``$\exp$''
  in \S\ref{ref-7-32} using the fact that
\[
U(\mathfrak{h}\ltimes M)\cong U(\mathfrak{h})\ltimes
\operatorname{Sym}(M)\qed
\]
\def\qed{}\end{proof}
 
\begin{lemma}\label{ref-8.5-42}
Let $\f{g}^\b$ be a nilpotent DG-Lie algebra over $k$ and $M^\b$ a nilpotent DG-module.
Consider the DG-algebra $\f{h}^\b$ which is $\f{g}^\b \ltimes M^\bullet$ as graded
Lie algebras and which is equipped with a deformed differential $(d_{\mathfrak{g}},d_M)+
d_0$ where $d_0: \f{g}^\b \mor M$ is of the form $g\mapsto (-1)^{|g|}gm_0$ for suitable
$m_0\in M^1$.
Then for $g\in\mathfrak{g}^0$, $m\in M^0$ and $(g_1, m_1)\in\mathfrak{h}^1$ we have
\begin{align*}
\exp(g,0)\ast(g_1,m_1)&=(\exp(g)\ast g_1,e^g(m_1-m_0)+m_0)\\
\exp(0,m)\ast(g_1,m_1)&=(g_1,m_1-(g_1+d_M)m)
\end{align*}
\end{lemma}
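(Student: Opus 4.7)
My plan is a direct calculation using the explicit form of the gauge action~\eqref{ref-7.1-33} together with the semi-direct product bracket. First I unpack the ingredients. For $(g',m')\in\mathfrak h^\bullet$, the bracket rule gives that $\ad(g,0)$ preserves the splitting $\mathfrak g^\bullet\oplus M^\bullet$, acting as $\ad g$ on the first factor and by $g\cdot(-)$ on the second, while $\ad(0,m)$ always lands in the $M$-component and vanishes on any element of the form $(0,\ast)$. In particular $(\ad(0,m))^2=0$. Evaluating the deformed differential on the two relevant types of elements yields $d_{\mathfrak h}(g,0)=(d_{\mathfrak g}g,\,g\,m_0)$ (using $|g|=0$) and $d_{\mathfrak h}(0,m)=(0,d_M m)$.

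For the first identity, I substitute $x=(g,0)$ and $y=(g_1,m_1)$ into~\eqref{ref-7.1-33}. Since $\ad(g,0)$ is block-diagonal, the $\mathfrak g$-component of $\exp(g,0)\ast(g_1,m_1)$ is exactly $e^{\ad g}g_1-\frac{e^{\ad g}-1}{\ad g}(d_{\mathfrak g}g)=\exp(g)\ast g_1$. In the $M$-component, iteration of $\ad(g,0)$ on $(d_{\mathfrak g}g,\,gm_0)$ acts by repeated application of $g$ on $gm_0$, so the first term of~\eqref{ref-7.1-33} contributes $e^gm_1$ and the correction sum contributes $\sum_{n\ge 0}\frac{g^{n+1}}{(n+1)!}m_0=(e^g-1)m_0$. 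Subtracting gives $e^gm_1-(e^g-1)m_0=e^g(m_1-m_0)+m_0$, as required.

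For the second identity the vanishing $(\ad(0,m))^2=0$ truncates everything: the exponential becomes $1+\ad(0,m)$, and only the $n=0$ term of the correction sum survives. Substituting $\ad(0,m)(g_1,m_1)=(0,-g_1m)$ and $d_{\mathfrak h}(0,m)=(0,d_M m)$ immediately gives $\exp(0,m)\ast(g_1,m_1)=(g_1,m_1-g_1m)-(0,d_M m)=(g_1,m_1-(g_1+d_M)m)$.

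The only genuinely delicate point is sign bookkeeping: the Koszul sign in the semi-direct bracket and the $(-1)^{|g|}$ in the definition of $d_0$ must be tracked carefully together with the degree shifts $|g|=0$, $|m|=0$, $|m_0|=1$, $|(g_1,m_1)|=1$. Once these are fixed, both formulas drop out by regrouping the exponential series termwise and recognizing the standard generating functions $e^{\ad g}$ and $(e^{\ad g}-1)/\ad g$.
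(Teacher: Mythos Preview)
Your proof is correct and follows essentially the same route as the paper's own argument: both substitute directly into the explicit gauge action formula~\eqref{ref-7.1-33}, use that $\ad(g,0)$ acts block-diagonally as $(\ad g,\,g\cdot)$ and that $(\ad(0,m))^2=0$, and then read off the two components by summing the resulting series. The paper's computation is line-for-line the same as yours, only with slightly less accompanying commentary.
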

\begin{proof}
We compute
\begin{align*}
\exp(g,0)*(g_1,m_1)&= e^{\Ad(g,0)}(g_1, m_1) -\sum_n \frac{1}{(n+1)!} \ad^n(g,0)(d_\f{h}(g,0))\\
&=(e^{\ad g}g_1, e^gm_1)-\sum_n \frac{1}{(n+1)!} \ad^n(g,0)(d_\f{g}g,d_0g)\\
&=(e^{\ad g}g_1,e^gm_1) -\sum_n \frac{1}{(n+1)!}(\ad^n(g)(d_\f{g}g),g^{n+1}m_0)\\
&=(e^g*g_1,e^g(m_1-m_0)+m_0)
\end{align*}
Similarly:
\begin{align*}
\exp(0, m)*(g_1, m_1)&=e^{\ad(0, m)}(g_1, m_1) -\sum \frac{1}{(n+1)!} \ad^n(0, m)(d_\f{h}(0,m))\\
&=(g_1,m_1)-(0,g_1m)-(0,d_Mm)\\
&=(g_1, m_1-(g_1+d_M)m)\qed
\end{align*}
\def\qed{}\end{proof}

We will also use the following variant of \eqref{ref-7.2-34}
\begin{lemma}\label{ref-8.6-43}
Let $\f{h}^\b$ be a nilpotent DG-Lie algebra with inner differential $d=[\mu_0,-]$. 
Then for $x\in \mathfrak{h}^0$, $y\in\mathfrak{h}^1$ one has
\[
\exp(x)*y= e^{\ad x}(y+\mu_0)-\mu_0
\]
\end{lemma}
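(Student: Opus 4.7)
The plan is to derive the identity by direct substitution into the general gauge action formula \eqref{ref-7.1-33}, exploiting the hypothesis that the differential of $\mathfrak{h}^\bullet$ is inner. Conceptually, the lemma asserts that when $d = [\mu_0,-]$ one may use the actual element $\mu_0 \in \mathfrak{h}^1$ in place of Manetti's formal adjoined degree-one element $\delta$ appearing in \eqref{ref-7.2-34}, since $\mu_0$ already satisfies the defining relation $dx = [\mu_0, x]$.

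The computation is as follows. Starting from
\[
\exp(x) \ast y = e^{\ad x}(y) - \sum_{n=0}^{\infty} \frac{1}{(n+1)!}(\ad x)^n(dx),
\]
I substitute $dx = [\mu_0, x] = -(\ad x)(\mu_0)$ to rewrite the tail as
\[
-\sum_{n=0}^{\infty} \frac{1}{(n+1)!}(\ad x)^n(dx) = \sum_{n=0}^{\infty} \frac{1}{(n+1)!}(\ad x)^{n+1}(\mu_0) = (e^{\ad x} - 1)(\mu_0).
\]
Substituting back and collecting the $e^{\ad x}$ terms yields
\[
\exp(x)\ast y = e^{\ad x}(y) + e^{\ad x}(\mu_0) - \mu_0 = e^{\ad x}(y + \mu_0) - \mu_0,
\]
which is the claim.

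There is no serious obstacle: the argument reduces to a single reindexing of the exponential series, and no additional nilpotency or convergence issue arises beyond what is already needed for the gauge action formula \eqref{ref-7.1-33} to make sense.
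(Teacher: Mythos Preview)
Your proof is correct and takes essentially the same approach as the paper: a direct computation using the exponential series and the hypothesis $dx=[\mu_0,x]=-(\ad x)(\mu_0)$. The paper phrases it as evaluating the right-hand side to recover \eqref{ref-7.1-33}, whereas you start from \eqref{ref-7.1-33} and arrive at the right-hand side, but the calculation is identical.
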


\begin{proof} Direct evaluation of the righthand side yields
the formula \eqref{ref-7.1-33} for $\exp(x)\ast y$.
\end{proof}

\begin{proof}[Proof of Theorem \ref{ref-8.1-37}]
We start by verifying that \eqref{ref-8.1-38} yields indeed a map of groupoids. 
To this end we have to define $\Phi(R)$ on maps. 
Note that by lemma \ref{ref-8.4-41} 
each element of $\exp(\f{D}^0(A, \mu_0, \eta_0)\otimes m)$ can
be uniquely written as $\exp(0, s^{-d-1}\xi)\exp(f,0)$ for $f \in \bar{\mathfrak{C}}^0(A)\otimes_k m=\Hom(A/k, A)
\tr_k m\subset \Hom(A,A)\otimes_k m$ and $\xi \in \overline{\CC}^{\,-}_{d+1}(A)\tr_k m$.  We put  
$
\phi=e^f
$.
Then $\phi\in \Hom(A,A)\otimes_k R$ is such that $\phi\mod m=\Id_A$.

Assume that
\begin{equation}
\label{ref-8.2-44}
\exp(0,s^{-d-1} \xi)\ast\exp(f,0)\ast(\mu_1, s^{-d-1}\eta_1)= (\mu_2, s^{-d-1}\eta_2)
\end{equation}
We define $\Phi(R)$ on maps as follows
\begin{equation}
\label{ref-8.3-45}
\Phi(R)(\exp(0,s^{-d-1}\xi)\exp(f,0))=(e^f,(-1)^d\xi)
\end{equation}
For this to be well defined we should have a morphism
\[
(\phi , (-1)^d\xi):(\mu_0+\mu_1,\eta_0+\eta_1)\mor 
(\mu_0+\mu_2,\eta_0+\eta_2)
\]
in $\Def^\flat_{A,\eta_0}(R)$. In other words:
\begin{enumerate}
\item[(a)]
  $\phi :(A\tr_k R, -(\mu_0+\mu_1)) \mor (A\tr_k R, -(\mu_0+\mu_2))$ 
is an $R$-algebra morphism;
\item[(b)] 
$\phi(\eta_0+\eta_1)= \eta_0+\eta_2+(-1)^d(L_{\mu_0+\mu_2}+u\mathsf{B})(\xi)$. 
\end{enumerate}
Put $\eta'_i=s^{-d-1}\eta_i$ for $i=0,1,2$, $\xi'=s^{-d-1}\xi$. 
We invoke Lemma \ref{ref-8.5-42} with $m_0=-\eta'_0$. Then \eqref{ref-8.2-44} yields
\begin{equation}
\label{ref-8.4-46}
(\mu_2, \eta'_2)=(\exp(f)\ast\mu_1,e^f(\eta'_0+\eta'_1)-\eta'_0-
L_{\exp(f)\ast\mu_1}(\xi')-(L_{\mu_0}+u\mathsf{B})(\xi'))
\end{equation}
We may compute $\exp(f)\ast\mu_1$ inside unnormalized cochains $\C^\bullet(A)$ and then we 
may invoke lemma \ref{ref-8.6-43}.  We find
\[
\exp(f)\ast\mu_1=e^{\ad f}(\mu_0+\mu_1)-\mu_0
\]
Furthermore a direct computation shows that
\begin{align*}
e^{\ad f}(\mu_0+\mu_1)&=e^f\circ (\mu_0+\mu_1)\circ (e^{-f},e^{-f})\\
&=\phi\circ (\mu_0+\mu_1)\circ (\phi^{-1},\phi^{-1})
\end{align*}
Hence \eqref{ref-8.4-46} translates into
\begin{align*}
\mu_0+\mu_2&=\phi\circ (\mu_0+\mu_1)\circ (\phi^{-1},\phi^{-1})\\
\eta_0'+\eta'_2&=\phi(\eta_0'+\eta'_1)-(L_{\mu_0+\mu_2}+u\mathsf{B})(\xi')
\end{align*}
The first of these equations yields (a). The second yields (b)
taking into account that $L_{\mu_0+\mu_2}+u\mathsf{B}$
has degree one, which induces a sign change.

It remains to show that our assignment respects compositions. By Lemma
\ref{ref-8.4-41} we have for $f,g,h\in \bar{\mathfrak{C}}^0(A)\otimes_k m$
such that $\exp(h)=\exp(g)\exp(f)$,
$\nu,\xi
\in \overline{\CC}_{-d-1}^{\,-}(A)\otimes_k m$:
\begin{align*}
\Phi(R)(\exp(0, s^{-d-1}\nu)&\exp(g,0)\circ \exp(0, s^{-d-1}\xi)\exp(f,0))\\
=& \Phi(R)(\exp(0, s^{-d-1}\nu)\exp(0, s^{-d-1} e^g\xi)\exp(g,0)\exp(f,0))\\
=&\Phi(R)( \exp(0,s^{-d-1}(\nu+e^g\xi))\exp(h,0))\\
=&(e^h, (-1)^d(\nu+e^g\xi))\\
=&(e^ge^f, (-1)^d(\nu+e^g\xi))
\end{align*}
and
\begin{align*}
\Phi(R)(\exp(0, s^{-d-1}\nu)\exp(g,0))&\circ \Phi(R)(\exp(0, s^{-d-1}\xi)\exp(f,0))\\
&=(e^g,(-1)^d\nu)(e^f,(-1)^d\xi)\\
&=(e^ge^f, (-1)^d(\nu+e^g\xi))
\end{align*}
by \eqref{ref-6.1-29}. We conclude that $\Phi(R)$ is indeed a map of groupoids.
By Lemma \ref{ref-8.3-40} it is bijective on objects, and running the above
computation backwards, starting from \eqref{ref-8.3-45}, we see that it
also bijective on maps. Thus $\Phi(R)$ is an isomorphism of groupoids.
\end{proof}
The following result implies Proposition \ref{ref-6.3-31}.
\begin{proposition}
Assume that $\eta_0,\eta'_0\in \overline{\CC}^-_{d}(A)$ induce
the same element in $\HC^-_{d}(A)$. Then $\mathfrak{D}^\bullet(A,\eta_0)\cong
\mathfrak{D}^\bullet(A,\eta_0')$.
\end{proposition}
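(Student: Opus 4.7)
The plan is to realize the desired isomorphism as a single gauge transformation inside the common ambient DG-Lie algebra $\mathfrak{D}^\bullet(A)^\sharp = \bar{\mathfrak{C}}^\bullet(A) \ltimes \Sigma^{-d-1}\overline{\CC}^-_\bullet(A)$. By construction both $\mathfrak{D}^\bullet(A,\eta_0)$ and $\mathfrak{D}^\bullet(A,\eta'_0)$ are the same graded Lie algebra $\mathfrak{D}^\bullet(A)^\sharp$, equipped with the deformed differentials associated to the two Maurer--Cartan elements $x=(0,s^{-d-1}\eta_0)$ and $x'=(0,s^{-d-1}\eta'_0)$. Hence, by \eqref{ref-7.3-35}, it suffices to exhibit $h\in \mathfrak{D}^0(A)^\sharp$ such that $\exp(h)\ast x = x'$; then $e^{\ad h}$ will be an isomorphism of DG-Lie algebras $\mathfrak{D}^\bullet(A,\eta_0)\to \mathfrak{D}^\bullet(A,\eta'_0)$.

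Since $\bar{\eta}_0 = \bar{\eta}'_0$ in $\HC^-_d(A)$, one can pick $\zeta \in \overline{\CC}^-_{d+1}(A)$ satisfying $(L_{\mu_0}+u\mathsf{B})(\zeta) = (-1)^d(\eta'_0-\eta_0)$ (the sign absorbing the shift convention of \S\ref{ref-4.6-16}), and take $h = (0,s^{-d-1}\zeta)\in \mathfrak{D}^0(A)^\sharp$. The technical observation that makes everything go through is that, although the ambient algebra $\mathfrak{D}^\bullet(A)^\sharp$ is manifestly not nilpotent, the adjoint action $\ad h$ \emph{is} nilpotent of order two: because $M^\bullet := \Sigma^{-d-1}\overline{\CC}^-_\bullet(A)$ is an abelian ideal of the semi-direct product, $\ad h$ lands in $M^\bullet$ and then kills it. Consequently $e^{\ad h}=\Id+\ad h$, the expression \eqref{ref-7.1-33} defining $\exp(h)\ast x$, and the conjugation isomorphism of \eqref{ref-7.3-35} are all given by terminating formulas and make sense without any nilpotency or completion hypothesis on the algebra.

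Substituting into \eqref{ref-7.1-33} (or equivalently applying the second formula of Lemma~\ref{ref-8.5-42}) and using that $[h,x]=0$ since both lie in the abelian ideal $M^\bullet$, one obtains $\exp(h)\ast x = x - dh$. With $\zeta$ normalized as above, a direct check gives $dh = (0,d_M(s^{-d-1}\zeta)) = x - x'$, so $\exp(h)\ast x = x'$, as required. I expect the only genuine obstacle to be the shift-sign bookkeeping of \S\ref{ref-4.6-16} when identifying $d_M$ with $\pm(L_{\mu_0}+u\mathsf{B})$ on $\Sigma^{-d-1}\overline{\CC}^-_\bullet(A)$; this only fixes the scalar in front of $\zeta$ and does not affect the existence of the gauge transformation. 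Combined with Corollary~\ref{ref-8.2-39}, this simultaneously establishes Proposition~\ref{ref-6.3-31}.
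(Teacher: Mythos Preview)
Your approach is essentially identical to the paper's: both exhibit $(0,s^{-d-1}\eta_0)$ and $(0,s^{-d-1}\eta'_0)$ as gauge-equivalent Maurer--Cartan elements in $\mathfrak{D}^\bullet(A)^\sharp$ via $\exp(0,s^{-d-1}\xi)$ with $(L_{\mu_0}+u\mathsf{B})\xi=(-1)^d(\eta'_0-\eta_0)$, and then invoke \eqref{ref-7.3-35}. Your explicit remark that $\ad h$ is nilpotent of order two (so the gauge formula terminates despite $\mathfrak{D}^\bullet(A)^\sharp$ not being nilpotent) is a point the paper passes over in silence but which is indeed needed; otherwise the arguments match.
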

\begin{proof}
From \eqref{ref-7.3-35} one sees that it is sufficient to show that
$(0,s^{-s-1}\eta_0),(0,s^{-d-1}\eta'_0)$ are in the same $G(\mathfrak{D}^\bullet(A)^\sharp)$
orbit. Pick $\xi\in \overline{\CC}^{\,-}_{d+1}(A)$ such that $\eta_0'=\eta_0+(-1)^d(L_{\mu_0}+u\mathsf{B})\xi$. We
compute using \eqref{ref-7.1-33}
\begin{align*}
\exp(0,s^{-d-1}\xi)\ast (0,s^{-d-1}\eta_0)&=(0,s^{-d-1}\eta_0)-
(0,(L_{\mu_0}+u\mathsf{B})(s^{-d-1}\xi))\\
&=(0,s^{-d-1}\eta_0')\qed
\end{align*}
\def\qed{}\end{proof}

\section{Relation with Hochschild cohomology}
\label{ref-9-47}
Let $(A,\bar{\eta}_0)$ be a $d$-Calabi-Yau $k$-algebra and let $-\mu_0$ be the
multiplication of $A$. Let $(R,m)\in \Nilp$. We may define pseudo-functors
$\Def_A$, $\Def^\flat_{A}:\Nilp\mor \Gd$ in the same way as $\Def_{A,\eta_0}$, $\Def^\flat_{A,\eta_0}$, ignoring $\eta_0$. The induced morphism
\[
\Def^\flat_A(R)\mor \Def_A(R)
\]
is essentially surjective on objects and surjective on morphisms. Furthermore
there is an isomorphism of groupoids
\[
\Phi(R):\MC(\bar{\mathfrak{C}}^\bullet(A)\otimes_k m)\mor
\Def^\flat_A(R):\mu\mapsto \mu_0+\mu
\]
The obvious  morphism of DG-Lie algebras
\[
\phi:\mathfrak{D}^\bullet(A,\eta_0)\mor \bar{\mathfrak{C}}^\bullet(A):(\mu,\eta)\mapsto \mu
\]
makes the following diagram commutative:
\[
\xymatrix{
\MCcal(\mathfrak{D}^\bullet(A,\eta_0))\ar[d]_{\Phi}\ar[r]^\phi& \MCcal(\bar{\mathfrak{C}}^\bullet(A))\ar[d]^{\Phi}\\
\Def_{A,\eta} \ar[r]_{\text{forget $\eta$}}& \Def_A
}
 \]
\section{Homology of {\mathversion{bold} $\mathfrak{D}^\bullet(A,\eta)$}}
Let $(A,\bar{\eta}_0)$ be a $d$-Calabi-Yau algebra as before with multiplication $-\mu_0$.
In this section we prove that the homology of $\f{D}^\bullet(A, \eta_0)$ is isomorphic to 
$\HC^-_{-\b+d-1}(A)$. Furthermore we show that the induced Lie bracket 
on $\HC^-_{-\b+d-1}(A)$ is given Menichi's string topology bracket \cite{Menichi}.

In our statements and computations we will use the following conventions:
\begin{itemize}
\item Taking homology classes is indicated by overlining. 
\item Depending on context $\cong$ will mean either  ``up to homotopy'' (when
discussing maps) or ``up to
addition of a coboundary'' (when discussing elements).
\end{itemize}
\begin{theorem}
\label{ref-10.1-48}
The map
\[
\Psi: \f{D}^\b(A,\eta_0) \mor \Sigma^{-d+1}
\overline{\mathfrak{\CC}}^{\,-}_\b(A): (\mu, s^{-d-1}\eta) 
\fun (-1)^{\vert \mu\vert -1}(i_\mu+uS_\mu)(s^{-d+1}\eta_0)+us^{-d+1}\eta
\]
is a quasi-isomorphism of complexes. 
\end{theorem}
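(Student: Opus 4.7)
The plan is to verify that $\Psi$ is a chain map and then deduce that it is a quasi-isomorphism by fitting $\Psi$ into a morphism of short exact sequences whose outer columns reduce, respectively, to a tautological shift and to Poincar\'e duality.

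For the chain-map check I would expand both sides. Using the twisted semi-direct product structure and the Maurer-Cartan element $(0,s^{-d-1}\eta_0)$, the differential unfolds as
\[
d_{\f{D}}(\mu,s^{-d-1}\eta)=\bigl([\mu_0,\mu],\,(L_{\mu_0}+u\mathsf{B})(s^{-d-1}\eta)+(-1)^{|\mu|+1}L_\mu(s^{-d-1}\eta_0)\bigr),
\]
exactly as in the Maurer-Cartan computation in the proof of Lemma \ref{ref-8.3-40}. To match this with $(\mathsf{b}+u\mathsf{B})\Psi(\mu,s^{-d-1}\eta)$ one applies the Tamarkin-Tsygan identity \eqref{ref-4.8-13} to the summand $(-1)^{|\mu|-1}(i_\mu+uS_\mu)(s^{-d+1}\eta_0)$ and uses the cocycle condition $(L_{\mu_0}+u\mathsf{B})(s^{-d+1}\eta_0)=0$ to kill the term $(i_\mu+uS_\mu)(L_{\mu_0}+u\mathsf{B})(s^{-d+1}\eta_0)$. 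What survives is $(i_{[\mu_0,\mu]}+uS_{[\mu_0,\mu]})(s^{-d+1}\eta_0)$ together with an extra $uL_\mu(s^{-d+1}\eta_0)$ contribution produced by \eqref{ref-4.8-13}, and the latter cancels with the image under $\Psi$ of the twist term $(-1)^{|\mu|+1}L_\mu(s^{-d-1}\eta_0)$.

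Next, I would fit $\Psi$ into the following morphism of short exact sequences of complexes:
\[
\xymatrix@C=1.2em{
0\ar[r]&\Sigma^{-d-1}\overline{\CC}^{\,-}_\bullet(A)\ar[r]\ar[d]_\alpha& \f{D}^\bullet(A,\eta_0)\ar[r]\ar[d]^\Psi& \bar{\f{C}}^\bullet(A)\ar[r]\ar[d]^\beta& 0\\
0\ar[r]& u\,\Sigma^{-d+1}\overline{\CC}^{\,-}_\bullet(A)\ar[r]& \Sigma^{-d+1}\overline{\CC}^{\,-}_\bullet(A)\ar[r]& \Sigma^{-d+1}\bar{\C}_\bullet(A)\ar[r]&0
}
\]
with $\alpha(s^{-d-1}\eta)=u\,s^{-d+1}\eta$ and $\beta(\mu)=(-1)^{|\mu|-1}i_\mu(s^{-d+1}\pi(\eta_0))$. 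The map $\alpha$ is an isomorphism of complexes, since both sides carry the differential $\mathsf{b}+u\mathsf{B}$ and multiplication by $u$ exactly compensates the shift $|u|=2$. The map $\beta$ is a chain map by \eqref{ref-4.5-10} together with $\mathsf{b}\pi(\eta_0)=0$, and on cohomology it equals, up to sign, the cap product $\bar\mu\mapsto\bar\mu\cap\overline{\pi(\eta_0)}$, which is the Poincar\'e duality isomorphism of Proposition \ref{ref-5.5-20}. The five-lemma applied to the long exact sequences in cohomology then forces $\Psi$ to be a quasi-isomorphism.

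The hard part will be the chain-map check: despite identity \eqref{ref-4.8-13} supplying essentially everything needed, the bookkeeping for the shifts $s^{-d-1}$ versus $s^{-d+1}$, the extra $u$ inserted in the target, and the signs from the semi-direct bracket is error-prone. The saving grace is the observation in \S\ref{ref-4.6-16} that the basic identities of Section \ref{ref-4-3} survive arbitrary shifts $\Sigma^r$ without sign changes, so no new identity needs to be derived.
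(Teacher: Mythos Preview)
Your proposal is correct and follows essentially the same route as the paper: verify that $\Psi$ is a chain map via the Tamarkin--Tsygan identity \eqref{ref-4.8-13} (using $(\mathsf{b}+u\mathsf{B})\eta_0=0$), then embed $\Psi$ in the same morphism of short exact sequences, with the left vertical map being multiplication by $u$ and the right one inducing Poincar\'e duality \eqref{ref-5.2-21}, and conclude with the five lemma. The only cosmetic difference is that the paper computes $(\mathsf{b}+u\mathsf{B})\circ\Psi$ directly and rewrites it as $\Psi\circ d_{\f{D}}$, whereas you outline the comparison from the $\f{D}$-side; the underlying calculation is the same.
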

\begin{proof}
To simplify the notation we put
\[
I_\mu=i_\mu+uS_\mu
\]
We first check that $\Psi$ does indeed commute with differentials. 
Write $\eta_0'=s^{-d-1}\eta_0$, $\eta'=s^{-d-1}\eta$. Then
\begin{equation}
\label{ref-10.1-49}
\Psi(\mu,\eta')=s^2((-1)^{\vert \mu\vert -1}I_\mu\eta'_0+u\eta')
\end{equation}
and hence
\begin{align*}
(d\circ \Psi)(\mu,\eta')&=
(\mathsf{b}+u\mathsf{B})s^2((-1)^{\vert \mu\vert -1}I_\mu \eta'_0+u\eta')\\
&= s^2((-1)^{\vert \mu \vert-1}(\mathsf{b}+u\mathsf{B}) I_\mu\eta'_0+u(\mathsf{b}+u\mathsf{B})\eta')\\
&=  s^{2}((-1)^{\vert \mu \vert-1}[\mathsf{b}+u\mathsf{B}, I_\mu](\eta'_0)+u(\mathsf{b}+u\mathsf{B})\eta')&&(\text{since $(\mathsf{b}+u\mathsf{B})\eta'_0=0$)}\\
&= s^{2}((-1)^{\vert \mu \vert-1}(uL_{\mu}-I_{d\mu})\eta'_0+u(\mathsf{b}+
u\mathsf{B})\eta')&& (\text{by \eqref{ref-4.8-13}})\\
&=  s^{2}((-1)^{\vert \mu \vert}I_{d\mu}\eta'_0+
u((\mathsf{b}+u\mathsf{B})\eta'-(-1)^{\vert \mu \vert}L_\mu \eta'_0))\\
&=\Psi(d\mu, (\mathsf{b}+u\mathsf{B})\eta'-(-1)^{\vert \mu \vert}L_\mu \eta'_0)
&&(\text{by \eqref{ref-10.1-49}})
\\
&=(\Psi\circ d)(\mu, \eta')
\end{align*}
To see that $\Psi$ is indeed a quasi-isomorphism, consider the following commutative diagram
\begin{displaymath}
\xymatrix{
0\ar[r] &  \Sigma^{-d-1}\overline{\CC}_\b^{\,-}(A)\ar[r] \ar[d]_\Psi& \f{D}^\bullet(A,\eta_0) \ar[r]\ar[d]\ar[d]_\Psi& \bar{\f{C}}^\b(A) \ar[r]\ar[d]^{\overline{\Psi}}&0\\
0\ar[r] & u\Sigma^{-d+1}\overline{\CC}_\b^{\,-}(A)\ar[r] & \Sigma^{-d+1}
\overline{\CC}_\b^{\,-}(A) \ar[r]& \Sigma^{-d+1}\bar{\C}_\b(A) \ar[r]&0
}
\end{displaymath}
The map on the left is  multiplication by $u$ which is an
isomorphism.  The map~$\overline{\Psi}$ is given on cohomology by
\[
\overline{\mu}\mapsto\pm\overline{I_\mu\eta_0\,\operatorname{mod} u}= \pm\overline{i_{\mu} \pi(\eta_0)}
\]
where $\pi$ is as in Proposition~\ref{ref-5.7-25}. Hence $\overline{\Psi}$ 
is an isomorphism by Proposition~\ref{ref-5.5-20}. From the five lemma we conclude that the middle arrow is an isomorphism on cohomology as well.
\end{proof}

We now describe the Lie bracket on $\HC^-_\bullet(A)$ induced by the quasi-isomorphism~$\Psi$.
As already used in the above proof 
the map 
\[
-\cap \pi(\bar{\eta}_0): \HH^i(A)
\mor \HH_{d-i}(A) 
\]
is invertible by Proposition \ref{ref-5.5-20}.  Let us denote its inverse by
$j$. Using $j$, one can transport the cup product on $\HH^\b(A)$ to a
product on $\HH_\b(A)$ 
\[
\cdot:\HH_i(A)\times \HH_j(A)\mor \HH_{i+j-d}(A)
\]
with explicit formula
\[
a\cdot b=(j(a)\cup j(b))\cap \pi(\bar{\eta}_0)
\]
or in a form more suitable for us below
\begin{equation}
\label{ref-10.2-50}
i_{\mu_1}\pi(\bar{\eta}_0)\cdot i_{\mu_2}\pi(\bar{\eta}_0)=i_{\mu_1\cup \mu_2}\pi(\bar{\eta}_0)
\end{equation}
\begin{theorem}\label{ref-10.2-51}
The Lie bracket induced on 
\[
H^\bullet(\Sigma^{-d+1}\overline{\CC}^{\,-}_\bullet(A))=
\HC_{\bullet+d-1}(A)
\]
by the quasi-isomorphism $\Psi$ is given by
\[
[-,-]: 
\HC^-_n(A) \times \HC^-_{m}(A) \mor \HC^-_{n+m-d+1}(A): (\eta_1, \eta_2) 
\fun (-1)^{|\eta_1|+d}\mathsf{B}(\pi(\eta_1)\,\cdot\,\pi(\eta_2))
\]
where $\mathsf{B}$ is given by
\[
\mathsf{B}:\HH_q(A)\mor \HC_{q+1}^-(A):\bar{\nu}\mapsto \overline{\mathsf{B}\nu}
\]
\end{theorem}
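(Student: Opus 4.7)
The strategy is to lift cyclic cycles $\eta_1,\eta_2\in\overline{\CC}^{\,-}_\bullet(A)$ to cocycles $X_i\in\mathfrak{D}^\bullet(A,\eta_0)$, compute the semi-direct product bracket $[X_1,X_2]$, push it forward through $\Psi$, and then simplify using the Cartan-type identities of Section~\ref{ref-4-3} until Menichi's formula becomes visible.

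For the lift, Poincar\'e duality (Proposition~\ref{ref-5.5-20}) gives Hochschild cocycles $\mu_i\in\bar{\mathfrak{C}}^\bullet(A)$ with $[i_{\mu_i}\pi(\eta_0)]=\pm[\pi(\eta_i)]$ in $\HH_\bullet(A)$. The cocycle equation $d_{\mathfrak{D}}(\mu_i,s^{-d-1}\tilde\eta_i)=0$ then reduces to $d\mu_i=0$ together with $(\mathsf{b}+u\mathsf{B})\tilde\eta_i=\pm L_{\mu_i}\eta_0$; the latter is solvable because the connecting map $[\mu]\mapsto[L_\mu\eta_0]$ of the short exact sequence $0\to\Sigma^{-d-1}\overline{\CC}^{\,-}_\bullet(A)\to\mathfrak{D}^\bullet(A,\eta_0)\to\bar{\mathfrak{C}}^\bullet(A)\to 0$ vanishes on Poincar\'e duals of cyclic classes, as follows from Theorem~\ref{ref-10.1-48}. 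A five-lemma argument as in the proof of Theorem~\ref{ref-10.1-48} then shows $\Psi(X_i)\sim\eta_i$ modulo $(\mathsf{b}+u\mathsf{B})$-coboundaries.

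Writing $I_\mu:=i_\mu+uS_\mu$, the semi-direct product formula gives
\[
\Psi([X_1,X_2])=\pm\,I_{[\mu_1,\mu_2]}\,s^{-d+1}\eta_0\;+\;u\bigl(L_{\mu_1}s^{-d+1}\tilde\eta_2\,\mp\,L_{\mu_2}s^{-d+1}\tilde\eta_1\bigr).
\]
Lemma~\ref{ref-4.2-14} rewrites the first summand, modulo $(\mathsf{b}+u\mathsf{B})$-coboundaries, as $\pm[L_{\mu_1},I_{\mu_2}]\eta_0$. Expanding the commutator, substituting the cocycle relation $L_{\mu_i}\eta_0=\pm(\mathsf{b}+u\mathsf{B})\tilde\eta_i$, and applying \eqref{ref-4.8-13} for $\mu_2$ to move the cyclic differential past $I_{\mu_2}$ produces $I_{\mu_2}L_{\mu_1}\eta_0\equiv\pm u L_{\mu_2}\tilde\eta_1$; this cancels the second cyclic term, leaving $\pm L_{\mu_1}(I_{\mu_2}\eta_0+u\tilde\eta_2)=\pm L_{\mu_1}\Psi(X_2)$, which by \eqref{ref-4.7-12} is cohomologous to $\pm L_{\mu_1}\eta_2$.

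It remains to identify $[L_{\mu_1}\eta_2]$ with $(-1)^{|\eta_1|+d}\,\mathsf{B}(\pi(\eta_1)\cdot\pi(\eta_2))$ in $\HC^-_\bullet(A)$. Extracting the $u^1$-coefficient of \eqref{ref-4.8-13} for $d\mu_1=0$ yields the chain-level Cartan formula $L_{\mu_1}=[\mathsf{b},S_{\mu_1}]+[\mathsf{B},i_{\mu_1}]$; applied to $\eta_2=\pi(\eta_2)+u\eta_2^{(1)}+\cdots$ and reassembled using the cyclic cocycle relations $\mathsf{b}\eta_2^{(k)}+\mathsf{B}\eta_2^{(k-1)}=0$, this gives $L_{\mu_1}\eta_2\equiv\mathsf{B}\,i_{\mu_1}\pi(\eta_2)$ modulo $(\mathsf{b}+u\mathsf{B})$-coboundaries, and combining \eqref{ref-4.2-7} with \eqref{ref-10.2-50} rewrites the right-hand side as $\pm\mathsf{B}(\pi(\eta_1)\cdot\pi(\eta_2))$. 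The main obstacle is precisely this last upgrade: the Cartan formula directly supplies only a $\mathsf{b}$-coboundary, and promoting it to a $(\mathsf{b}+u\mathsf{B})$-coboundary requires carefully reorganising the computation in powers of $u$ and invoking \emph{all} the cyclic cocycle relations on $\eta_2$. The sign $(-1)^{|\eta_1|+d}$ is then a mechanical chase through the shift conventions of \S\ref{ref-4.6-16}, the semi-direct product bracket, and Lemma~\ref{ref-4.2-14}.
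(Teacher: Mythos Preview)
Your outline is correct and tracks the paper's proof closely: both start from $\Psi$ of the semi-direct-product bracket, invoke Lemma~\ref{ref-4.2-14} to replace $I_{[\mu_1,\mu_2]}$ by $(-1)^{|\mu_1|}[L_{\mu_1},I_{\mu_2}]$, and then use the Cartan identity~\eqref{ref-4.8-13} repeatedly to cancel the $uL_{\mu_i}\tilde\eta_j$ contributions. The organisational difference is that the paper subtracts the coboundaries $(\mathsf{b}+u\mathsf{B})I_{\mu_i}\eta'_j$ \emph{first}, so that everything is expressed in terms of $\eta_0$ alone, and then uses one further $u$-divisibility trick (namely $I_{\mu_1}I_{\mu_2}\equiv (-1)^{(|\mu_1|+1)(|\mu_2|+1)}I_{\mu_2\cup\mu_1}\pmod{u}$, from~\eqref{ref-4.2-7}) to arrive at $L_{\mu_2\cup\mu_1}\eta'_0$ directly; you instead keep $\tilde\eta_2$ around, land on $L_{\mu_1}\eta_2$, and must make the extra substitution $\pi(\eta_2)\sim i_{\mu_2}\pi(\eta_0)$ at the end. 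Both routes are valid; the paper's is slightly more symmetric and makes the appearance of the cup product more transparent.

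The ``main obstacle'' you flag---upgrading $L_{\mu}\eta\equiv\mathsf{B}i_\mu\pi(\eta)$ from a Hochschild identity to a negative-cyclic one---is isolated in the paper as Lemma~\ref{ref-10.3-52}, and its proof there is much cleaner than the power-series reorganisation you sketch: one simply observes that
\[
u\bigl(L_\mu\eta-\mathsf{B}i_\mu\pi(\eta)\bigr)=(\mathsf{b}+u\mathsf{B})\bigl(I_\mu\eta-i_\mu\pi(\eta)\bigr)
\]
via~\eqref{ref-4.8-13}, and that $I_\mu\eta-i_\mu\pi(\eta)$ is divisible by $u$ because its constant term vanishes. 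This avoids your coefficient-by-coefficient bookkeeping entirely and is worth extracting as a lemma in its own right.
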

We first need the following technical lemma.
\begin{lemma}
\label{ref-10.3-52}
Let $\mu \in \bar{\f{C}}^\b(A)$ and $\eta \in \overline{\CC}_\b^{\,-}(A)$ 
be cocycles. Then $L_\mu \eta$ and $\mathsf{B}i_\mu \pi(\eta)$ are both cocycles 
in $\overline{\CC}_\b^{\,-}(A)$ 
and $\overline{\mathsf{B}i_\mu \pi(\eta})=\overline{L_\mu \eta}$ in $\HC^-_\bullet(A)$.
\end{lemma}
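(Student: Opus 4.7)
My plan is to derive everything from the Tamarkin--Tsygan identity \eqref{ref-4.8-13}, applied with $d\mu=0$, which reduces to the clean relation $[\mathsf{b}+u\mathsf{B},\,i_\mu+uS_\mu]=uL_\mu$. The cocycle statements will be immediate, and the homological equality will come from feeding $\eta$ into this identity and then factoring out a $u$.

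First the two cocycle claims. That $L_\mu\eta$ is a cocycle is \eqref{ref-4.7-12}: $(\mathsf{b}+u\mathsf{B})L_\mu\eta=L_{d\mu}\eta+(-1)^{|\mu|}L_\mu(\mathsf{b}+u\mathsf{B})\eta=0$, using $d\mu=0$ and $(\mathsf{b}+u\mathsf{B})\eta=0$. For $\mathsf{B}i_\mu\pi(\eta)$, viewed inside $\overline{\CC}^{\,-}_\bullet(A)$ as a $u$-constant element, I only need $(\mathsf{b}+u\mathsf{B})\mathsf{B}i_\mu\pi(\eta)=\mathsf{b}\mathsf{B}i_\mu\pi(\eta)=-\mathsf{B}\mathsf{b}i_\mu\pi(\eta)$; and by \eqref{ref-4.5-10} together with $d\mu=0$ and $\mathsf{b}\pi(\eta)=0$, we have $\mathsf{b}i_\mu\pi(\eta)=\pm i_\mu\mathsf{b}\pi(\eta)=0$, so this vanishes. (These same facts will be re-used below.)

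For the main statement, apply $[\mathsf{b}+u\mathsf{B},\,i_\mu+uS_\mu]=uL_\mu$ to $\eta$ and use $(\mathsf{b}+u\mathsf{B})\eta=0$ to obtain
\[
(\mathsf{b}+u\mathsf{B})(i_\mu+uS_\mu)\eta \;=\; uL_\mu\eta .
\]
The constant-in-$u$ term of $(i_\mu+uS_\mu)\eta$ is $i_\mu\pi(\eta)$, so $(i_\mu+uS_\mu)\eta - i_\mu\pi(\eta)=u\gamma$ for a well-defined $\gamma\in \overline{\CC}^{\,-}_\bullet(A)$. Subtracting $(\mathsf{b}+u\mathsf{B})i_\mu\pi(\eta)=\mathsf{b}i_\mu\pi(\eta)+u\mathsf{B}i_\mu\pi(\eta)=u\mathsf{B}i_\mu\pi(\eta)$ from the displayed equation yields
\[
u(\mathsf{b}+u\mathsf{B})\gamma \;=\; uL_\mu\eta - u\mathsf{B}i_\mu\pi(\eta),
\]
and since $\overline{\CC}^{\,-}_\bullet(A)$ is $u$-torsion-free, I can cancel $u$ to conclude $(\mathsf{b}+u\mathsf{B})\gamma=L_\mu\eta-\mathsf{B}i_\mu\pi(\eta)$, which is exactly the claimed cohomological equality.

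I do not expect any real obstacle: the only delicate point is keeping track that the explicit homotopy $(i_\mu+uS_\mu)\eta-i_\mu\pi(\eta)$ really is divisible by $u$ in $\overline{\CC}^{\,-}_\bullet(A)$ (equivalently, that the $u=0$ parts of the two pieces cancel), which is immediate. Signs should be checked once at the end but do not affect the argument, since everything ultimately follows from two graded commutators and one $u$-cancellation.
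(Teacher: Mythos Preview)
Your proof is correct and follows essentially the same approach as the paper: both arguments use the Tamarkin--Tsygan identity \eqref{ref-4.8-13} with $d\mu=0$, apply it to $\eta$, observe that $I_\mu\eta-i_\mu\pi(\eta)$ is divisible by $u$, and cancel $u$ to exhibit the explicit primitive $\gamma=u^{-1}(I_\mu\eta-i_\mu\pi(\eta))$. The only difference is cosmetic ordering---the paper first multiplies the difference $L_\mu\eta-\mathsf{B}i_\mu\pi(\eta)$ by $u$ and then divides, whereas you write down the identity first and subtract---but the content is identical.
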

\begin{proof}
$L_\mu \eta$ is a cocycle by \eqref{ref-4.7-12}. $\mathsf{B}i_\mu \pi(\eta)$ is a cocycle since $\pi(\eta)$ is a cocycle in $\bar{\C}_\bullet(A)$
and 
\[
(\mathsf{b}+u\mathsf{B})(\mathsf{B}i_\mu \pi(\eta))= 
\mathsf{b}\mathsf{B}i_\mu \pi(\eta)= -\mathsf{B} \mathsf{b}i_\mu \pi(\eta)=0
\]
where the last equality follows from \eqref{ref-4.5-10}.

For the second claim, we first multiply by $u$:
\begin{align*}
u(L_\mu\eta-\mathsf{B}i_\mu\pi(\eta))&=[\mathsf{b}+u\mathsf{B},I_\mu]\eta-u\mathsf{B}i_\mu\pi(\eta)&&(\text{by \eqref{ref-4.8-13}})\\
&=(\mathsf{b}+u\mathsf{B})I_\mu\eta-u\mathsf{B}i_\mu\pi(\eta)&&(\text{since $(\mathsf{b}+u\mathsf{B})\eta=0$})\\
&=(\mathsf{b}+u\mathsf{B})(I_\mu\eta-i_\mu \pi(\eta))&& (\text{since $\mathsf{b}i_\mu\pi(\eta)=0$})
\end{align*}

Now, $\pi(I_\mu\eta-i_\mu(\pi(\eta))= i_\mu\pi(\eta)-i_\mu\pi(\eta)=0$, which means that $I_\mu\eta-i_\mu\pi(\eta)$ is divisible by $u$. Thus it follows that 
\[
L_\mu\eta-\mathsf{B}i_\mu\pi(\eta)=(\mathsf{b}+u\mathsf{B})(u^{-1}(I_\mu\eta-i_\mu\pi(\eta)))
\] hence the claim.
\end{proof}

\begin{proof}[Proof of Theorem \ref{ref-10.2-51}]
Let $(\mu_1, s^{-d-1}\eta_1)$ and $(\mu_2,s^{-d-1}\eta_2)$ be two cocycles in $\f{D}^\b(A, \eta_0)$. We
must prove for $\eta_i'=s^{-d-1}\eta_i$
\begin{equation}
\label{ref-10.3-53}
\overline{s^{d-1}\Psi([(\mu_1, \eta'_1),(\mu_2,\eta'_2)])}=[\overline{s^{d-1}\Psi(\mu_1,\eta'_1)},\overline{s^{d-1}\Psi(\mu_2,\eta'_2)}]
\end{equation}
We will first compute the lefthand side of \eqref{ref-10.3-53}.
Writing out the differential in $\mathfrak{D}^\bullet(A,\eta'_0)$ explicitly, the fact that $({\mu_1}, {\eta'_1})$, $(\mu_2,{\eta'_2})$
are cocycles implies
\begin{equation}
\label{ref-10.4-54}
\begin{gathered}
d{\mu_1}=d\mu_2=0\\
(\mathsf{b}+u\mathsf{B}){\eta'_1}-(-1)^{\vert {\mu_1} \vert}L_{\mu_1} {\eta'_0}=(\mathsf{b}+u\mathsf{B}){\eta'_2}-(-1)^{\vert \mu_2\vert }L_{\mu_2}{\eta'_0}=0
\end{gathered}
\end{equation}
where ${\eta'_0}=s^{-d-1}\eta'_0$. We compute
\begin{equation}
\begin{aligned}
 x&\overset{\text{def}}{=}s^{d-1}\Psi([({\mu_1},{\eta'_1}),(\mu_2{\eta'_2})])\\
 &= s^{d-1}\Psi([{\mu_1},\mu_2], L_{\mu_1}{\eta'_2}-(-1)^{\vert {\mu_1}\vert \vert {\eta'_2}  \vert} L_{\mu_2} {\eta'_1})\\
&=s^{d+1}((-1)^{|{\mu_1}|+|\mu_2| -1}I_{[{\mu_1},\mu_2]} {\eta'_0}+u( L_{\mu_1}{\eta'_2}-(-1)^{\vert {\mu_1}\vert \vert \mu_2  \vert} L_{\mu_2} {\eta'_1})) \label{ref-10.5-55}
\end{aligned}
\end{equation}
where we have used \eqref{ref-10.1-49} and the fact that $|{\eta'_2}|=|\mu_2|$.

We now consider the boundary element $(\mathsf{b}+u\mathsf{B})I_{\mu_1} {\eta'_2}$. By \eqref{ref-4.8-13}, we have
\[
(\mathsf{b}+u\mathsf{B})I_{\mu_1} {\eta'_2}-(-1)^{\vert{\mu_1} \vert+1} I_{\mu_1} (\mathsf{b}+u\mathsf{B}) {\eta'_2}+I_{d{\mu_1}}{\eta'_2}=uL_{\mu_1} {\eta'_2}
\]
Taking into account \eqref{ref-10.4-54} this becomes
\begin{align*}
(\mathsf{b}+u\mathsf{B})I_{\mu_1} {\eta'_2}
&= (-1)^{\vert {\mu_1}\vert-1 } I_{\mu_1}(\mathsf{b}+u\mathsf{B}) {\eta'_2}+uL_{\mu_1} {\eta'_2}\\
&=(-1)^{\vert {\mu_1}\vert-1  +\vert \mu_2 \vert} I_{\mu_1}  L_{\mu_2}{\eta'_0}+uL_{\mu_1} {\eta'_2}
\end{align*}
and similarly
\[
(\mathsf{b}+u\mathsf{B})I_{\mu_2}{\eta'_1}=(-1)^{\vert \mu_2\vert -1  + \vert {\mu_1} \vert } I_{\mu_2} L_{{\mu_1}}{\eta'_0}+uL_ {\mu_2} {\eta'_1}
\]

We now subtract both boundaries with appropriate sign from \eqref{ref-10.5-55} to obtain the following homologous cocycle
\begin{equation}
\begin{aligned}
\label{ref-10.6-56}
  x&\cong
  (-1)^{\vert {\mu_1}\vert+\vert \mu_2\vert
    -1} 
s^{d+1}(I_{[{\mu_1},\mu_2]}{\eta'_0}-
    I_{\mu_1} L_{\mu_2}{\eta'_0}+ (-1)^{|{\mu_1}||\mu_2|}I_{\mu_2} L_{{\mu_1}}
  {\eta'_0})\\
&=(-1)^{\vert {\mu_1}\vert+\vert \mu_2\vert
    -1} 
s^{d+1}
(
I_{[{\mu_1},\mu_2]}-
    I_{\mu_1} L_{\mu_2}+ (-1)^{|{\mu_1}||\mu_2|}I_{\mu_2} L_{{\mu_1}}
){\eta'_0}
\end{aligned}
\end{equation}
By Lemma \ref{ref-4.2-14} and \eqref{ref-10.4-54}:
\[
[L_{\mu_1}, I_{\mu_2}]-(-1)^{\vert {\mu_1} \vert} I_{[{\mu_1}, \mu_2]}\cong 0
\]
Thus 
\begin{align*}
I_{[{\mu_1}, \mu_2]}&\cong (-1)^{\vert {\mu_1}\vert}(L_{\mu_1} I_{\mu_2}-(-1)^{\vert L_{\mu_1} \vert \vert I_{\mu_2}\vert }I_{\mu_2}  L_{{\mu_1}})\\
&=(-1)^{\vert {\mu_1} \vert }( L_{\mu_1}  I_{\mu_2}-(-1)^{\vert {\mu_1} \vert (\vert \mu_2 \vert +1)}I_{\mu_2} L_{\mu_1})\\
&=(-1)^{\vert {\mu_1} \vert } L_{\mu_1}  I_{\mu_2}-(-1)^{\vert {\mu_1} \vert \vert \mu_2 \vert}I_{\mu_2} L_{\mu_1}
\end{align*}
Substituting this in  \eqref{ref-10.6-56} we obtain
\begin{equation}
\label{ref-10.7-57}
\nonumber x\cong 
(-1)^{\vert {\mu_1}\vert+\vert \mu_2\vert
    -1} 
s^{d+1}((-1)^{\vert {\mu_1} \vert } L_{\mu_1}  I_{\mu_2}\eta'_0
-
    I_{\mu_1} L_{\mu_2}{\eta'_0})
\end{equation}
Next we observe, using \eqref{ref-4.8-13}
\begin{equation}
\label{ref-10.7-58}
\begin{split}
[\mathsf{b}+u\mathsf{B},I_{\mu_1} I_{\mu_2}-&(-1)^{(|{\mu_1}|+1)(|\mu_2|+1)}I_{\mu_2\cup{\mu_1}}]\\
&=[\mathsf{b}+u\mathsf{B},I_{\mu_1}]I_{\mu_2}
+(-1)^{|{\mu_1}|+1}I_{\mu_1}[\mathsf{b}+u\mathsf{B},I_{\mu_2}]-(-1)^{(|{\mu_1}|+1)(|\mu_2|+1)}[\mathsf{b}+u\mathsf{B},I_{{\mu_2}\cup \mu_1}]\\
&=u(L_{\mu_1} I_{\mu_2}+(-1)^{|{\mu_1}|+1}I_{\mu_1} L_{\mu_2}-(-1)^{(|{\mu_1}|+1)(|\mu_2|+1)}L_{\mu_2\cup {\mu_1}})
\end{split}
\end{equation}
and also using \eqref{ref-4.2-7}:
\begin{align*}
I_{\mu_1} I_{\mu_2}-(-1)^{(|{\mu_1}|+1)(|\mu_2|+1)}I_{\mu_2\cup{\mu_1}}\mod u&=
i_{\mu_1} i_{\mu_2}-(-1)^{(|{\mu_1}|+1)(|\mu_2|+1)}i_{\mu_2\cup{\mu_1}}\\
&=0
\end{align*}
In other words $I_{\mu_1} I_{\mu_2}-(-1)^{(|{\mu_1}|+1)(|\mu_2|+1)}I_{\mu_2\cup{\mu_1}}$ is divisible by
$u$ and we obtain from \eqref{ref-10.7-58} 
\[
L_{\mu_1} I_{\mu_2}+(-1)^{|{\mu_1}|+1}I_{\mu_1} L_{\mu_2}\cong (-1)^{(|{\mu_1}|+1)(|\mu_2|+1)}L_{\mu_2\cup {\mu_1}}
\]
Substituting this back in \eqref{ref-10.7-57} we find
\begin{align*}
x&\cong  (-1)^{|{\mu_1}|(|\mu_2|+1)}
s^{d+1}
L_{\mu_2\cup {\mu_1}}
{\eta'_0}\\
&= (-1)^{|{\mu_1}|(|\mu_2|+1)}s^{d+1}\mathsf{B}i_{\mu_2\cup{\mu_1}} \pi({\eta'_0})
&& (\text{by Lemma \ref{ref-10.3-52}})\\
&\cong(-1)^{|\mu_2|+1}s^{d+1}\mathsf{B}i_{\mu_1\cup{\mu_2}} \pi({\eta'_0})&&(\text{by \S\ref{ref-4.2-4}})\\
&\cong (-1)^{|\mu_2|+1} (-1)^{(|\mu_1|+|\mu_2|+1)(d+1)}\mathsf{B}i_{\mu_1\cup{\mu_2}} \pi({\eta_0})
\end{align*}
and hence by \eqref{ref-10.2-50}
\[
\bar{x}=(-1)^{|\mu_2|+1} (-1)^{(|\mu_1|+|\mu_2|+1)(d+1)}\mathsf{B}(i_{\mu_1}\pi(\bar{\eta}_0)\cdot i_{\mu_2}\pi(\bar{\eta}_0))
\]

To compute the righthand side of \eqref{ref-10.3-53} we note
\begin{align*}
\pi(s^{d-1}\Psi(\mu_i,\eta_i'))&=\pi(s^{d+1}((-1)^{|\mu_i|-1}I_{\mu_i}\eta'_0+u\eta'_i)) &&(\text{by \eqref{ref-10.1-49}})\\
&=(-1)^{|\mu_i|-1}(-1)^{(|\mu_i|+1)(d+1)}i_{\mu_i}\pi(\eta_0)
\end{align*}
so that
\begin{align*}
[\overline{s^{d-1}\Psi(\mu_1,\eta'_1)},\overline{s^{d-1}\Psi(\mu_2,\eta'_2)}]&=(-1)^{|\mu_1|+d}\mathsf{B}
(\overline{s^{d-1}\Psi(\mu_1,\eta'_1)}\cdot \overline{s^{d-1}\Psi(\mu_1,\eta'_1)})\\
&=(-1)^{|\mu_1|+d+|\mu_1|+|\mu_2|}(-1)^{(|\mu_1|+|\mu_2|)(d+1)}\mathsf{B}(\overline{i_{\mu_1}\pi(\eta_0)}\cdot \overline{i_{\mu_2}\pi(\eta_0)})\\
&=(-1)^{|\mu_2|+1} (-1)^{(|\mu_1|+|\mu_2|+1)(d+1)}\mathsf{B}(i_{\mu_1}\pi(\bar{\eta}_0)\cdot i_{\mu_2}\pi(\bar{\eta}_0))
\end{align*}
finishing the proof.
\end{proof}

\section{The commutative case}
\label{ref-11-59}
In this section we will use formality results from \cite{Dolgushev,Ko3,Shoikhet2,Tsygan,Willwacher1} 
so we will assume that the ground field
$k$ contains $\mathbb{R}$. It is likely that it is sufficient to assume $\operatorname{char} k=0$ by using the
 methods from \cite{dtt2,dtt} but we have not checked it.

Let $A=\mathcal{O}(X)$ where $X$ is a smooth affine $d$-dimensional 
Calabi-Yau variety over~$k$. Let $T^{\poly,\bullet}(A)$ be the Lie algebra of poly-vector fields on $X$. We assume that
$T^{\poly,\bullet}(A)$ is shifted in such a way that the Lie bracket has degree zero. 
Similarly let $\Omega^\bullet(A)$ be the differential forms on $X$ (not shifted). 

We fix  a volume form $\eta\in \Omega^d(A)$. The Hochschild-Kostant-Rosenberg-map furnishes an isomorphism
$\HH_d(A)\cong \Omega^d(A)$. So we may consider $\eta$ as an element
in $\HH_d(A)$ and hence by Proposition \ref{ref-5.7-25} as a cycle (still
denoted by $\eta$) in $\CC^-_d(A)$. It is well-known and easy to
see that $(A,\eta)$ is a Calabi-Yau algebra in the sense of Ginzburg.
Let
\[
\div:T^{\poly,\bullet}(A)\mor T^{\poly,\bullet-1}(A)
\]
be the divergence operator  corresponding to $\eta$ (see \S\ref{ref-11.4-68} below).
The divergence is a differential which acts as a derivation with respect
to the Lie bracket on $T^{\poly,\bullet}(A)$.

In this section we will prove the following result
\begin{theorem} 
\label{ref-11.1-60}
There exists an isomorphism
\[
(T^{\poly,\bullet}(A)[[u]],-u\div)\cong \mathcal{D}^\bullet(A,\eta)
\]
in the homotopy category of DG-Lie algebras which fits in a diagram like \eqref{ref-1.3-2}.
\end{theorem}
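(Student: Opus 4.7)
The plan is to combine Kontsevich's formality for polyvector fields with Willwacher's formality for cyclic chains (and its antecedents due to Dolgushev, Shoikhet and Tsygan) in order to reduce the statement to an identification of twisted semi-direct products on the commutative side. The first input I will invoke is an $L_\infty$-quasi-isomorphism of pairs (DG-Lie algebra, DG-Lie module)
\[
\bigl(T^{\poly,\bullet}(A),\,\Omega^\bullet(A)[[u]],\,-ud\bigr) \xrightarrow{\sim} \bigl(\bar{\mathfrak{C}}^\bullet(A),\,\overline{\CC}^{\,-}_\bullet(A),\,\mathsf{b}+u\mathsf{B}\bigr),
\]
compatibly with the Lie derivative actions, whose first component is Kontsevich's formality and whose second component restricts to the HKR map in lowest order; globalization to the smooth affine variety $X$ is done by the standard Fedosov--Dolgushev trick.

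Next, I will apply the appropriately shifted semi-direct product construction, which is functorial for $L_\infty$-morphisms of pairs, to obtain an $L_\infty$-quasi-isomorphism
\[
T^{\poly,\bullet}(A)\ltimes\Sigma^{-d-1}\Omega^\bullet(A)[[u]] \xrightarrow{\sim} \mathfrak{D}^\bullet(A)^\sharp.
\]
The volume form $\eta$, viewed as an element of $\Sigma^{-d-1}\Omega^d(A)$ sitting in cohomological degree one, satisfies the Maurer--Cartan equation: it is closed (being top-degree, so $d\eta=0$) and has trivial self-bracket (the module sits as an abelian ideal). Under the formality map $\eta$ is sent to an MC element equivalent to $\eta_0$. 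Since MC twisting is $L_\infty$-functorial, we obtain
\[
\bigl(T^{\poly,\bullet}(A)\ltimes\Sigma^{-d-1}\Omega^\bullet(A)[[u]]\bigr)_\eta \xrightarrow{\sim} \mathfrak{D}^\bullet(A,\eta).
\]

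The final and most delicate step is to identify the twisted semi-direct product on the left with $(T^{\poly,\bullet}(A)[[u]],-u\div)$ in the homotopy category of DG-Lie algebras. Using the isomorphism $\iota_\eta\colon T^{\poly,p}(A)\to \Omega^{d-p-1}(A)$ provided by the non-degenerate volume form, the natural map $\Psi\colon Xu^i\mapsto(X,0)$ if $i=0$ and $Xu^i\mapsto(0,u^{i-1}\iota_X\eta)$ if $i\geq 1$ is a degree-preserving bijection and a chain map by Cartan's identity $L_X\eta=d\iota_X\eta=\iota_{\div X}\eta$ combined with $d\eta=0$. The main obstacle will be that $\Psi$ is not a strict DG-Lie morphism: the Schouten bracket on the source differs from the semi-direct product bracket on the target by a correction of the form $L_X(\iota_Y\eta)-\iota_{[X,Y]}\eta = \pm\iota_{Y\wedge\div X}\eta$, which must be absorbed into higher components of an $L_\infty$-morphism or removed via explicit homotopy transfer. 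Once this identification is in place, the commutative diagram \eqref{ref-1.3-2} is immediate: the vertical forgetful map $\phi$ corresponds to the projection $u\mapsto 0$ (since positive $u$-powers all land in the $\Sigma^{-d-1}\Omega^\bullet$-ideal), and the bottom row is the Kontsevich formality extracted from the first step.
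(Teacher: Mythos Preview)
Your plan is essentially the same as the paper's: pass to semi-direct products, apply formality for cochains and cyclic chains, twist by the Maurer--Cartan element coming from $\eta$, and finally identify the commutative-side twist with $(T^{\poly,\bullet}(A)[[u]],-u\div)$. Two points are worth flagging.

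First, the cyclic chain formality as cited (Tsygan, Shoikhet, Dolgushev, Willwacher) is an $L_\infty$-morphism of $T^{\poly,\bullet}(A)$-modules $\mathfrak{S}:\overline{\CC}^{\,-}_\bullet(A)\to\Omega^\bullet(A)[[u]]$, where the source is a $T^{\poly}$-module by pullback along Kontsevich's $\mathfrak{U}$. So the two formality arrows point in opposite directions on the module part and one obtains a \emph{roof}
\[
T^{\poly,\bullet}(A)\ltimes\Sigma^{-d-1}\Omega^\bullet(A)[[u]]\ \xleftarrow{\ \mathfrak{S}\ }\ T^{\poly,\bullet}(A)\ltimes\Sigma^{-d-1}\overline{\CC}^{\,-}_\bullet(A)\ \xrightarrow{\ \mathfrak{U}\ }\ \mathfrak{D}^\bullet(A)^\sharp,
\]
not a single $L_\infty$-quasi-isomorphism of pairs as you wrote. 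This is only a bookkeeping issue (both legs are quasi-isomorphisms, and twisting by $(0,\eta')$ preserves this), but your one-arrow formulation is not literally what the literature supplies.

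Second, the paper resolves exactly the obstacle you name in your last paragraph, and does so by a concrete construction rather than by invoking abstract homotopy transfer. The key is a general fact about BV-algebras (Proposition~\ref{ref-A.4-81}): for any DG-BV-algebra $\mathfrak{h}^\bullet$ there is an $L_\infty$-isomorphism $(\mathfrak{h}^\bullet[[u]],-u\Delta)\cong(\mathfrak{h}^\bullet\ltimes\mathfrak{a},-u\Delta)$ where $\mathfrak{a}=u\mathfrak{h}^\bullet[[u]]$ is abelian and $\mathfrak{h}^\bullet$ acts on it by the \emph{twisted} operation $h\star a=[h,a]+(-1)^{|h|+1}\Delta h\cup a$. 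Applied with $\mathfrak{h}=T^{\poly}(A)$ and $\Delta=\div$, this absorbs precisely your correction term $\iota_{Y\wedge\div X}\eta$; after that, the map $a\mapsto(-1)^{|a|}u^{-1}a\cap\eta'$ is a \emph{strict} DG-Lie isomorphism from $(T^{\poly}\ltimes\mathfrak{a},-u\div)$ onto the $\eta$-twist $\mathfrak{T}^\bullet(A,\eta)$. So the ``higher components'' you anticipate are built once and for all into the BV-algebra lemma, not produced ad hoc.
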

Recall that the homotopy category of DG-Lie algebras is the category of
DG-Lie algebras with quasi-isomorphisms formally inverted.

\subsection{Semi-direct products for  $L_\infty$-algebras}
\label{ref-11.1-61}
We remind the reader of a few basic definition regarding $L_\infty$-algebras and modules.
Let $\mathfrak{h}^\bullet$ be a graded $k$-vector space. Recall that
an $L_\infty$-structure on $\mathfrak{h}^\bullet$ is a square zero,
degree one coderivation $Q$ on the symmetric coalgebra $S^c(\Sigma
\mathfrak{h}^\bullet)$.  Such an $L_\infty$-structure is determined by
its Taylor coefficients $\partial^n Q$ which are maps $S^n(\Sigma
\mathfrak{h}^\bullet)\mor \Sigma \mathfrak{h}^\bullet$.  \emph{Here
  and in related situations below we always assume that zeroth order
  Taylor coefficient are zero.}

A DG-Lie algebra can be made into an $L_\infty$-algebra
by putting $\partial^1Q(sg)=-sdg$, $\partial^2Q(sg,sh)=(-1)^{|g|}s[g,h]$,
$\partial^nQ=0$ for $n\ge 3$.

A morphism of $L_\infty$-algebras $\psi:(\mathfrak{g}^\bullet,Q)\rightarrow(\mathfrak{h}^\bullet,Q)$    
is a coalgebra morphism
$\psi:S^c(\Sigma\mathfrak{g}^\bullet)\mor S^c(\Sigma\mathfrak{h}^\bullet)$
commuting with $Q$.  It is also determined by its Taylor coefficients
$\partial^n\psi:S^n(\Sigma\mathfrak{g}^\bullet)\mor\Sigma
\mathfrak{h}^\bullet$.

If $V^\bullet$ is a graded $k$-vector space then an
$L_\infty$-$\mathfrak{h}^\bullet$-module structure on $V^\bullet$ is a 
square zero, degree one differential  $R:S^c(\Sigma \mathfrak{h}^\bullet)\otimes V^\bullet
\rightarrow S^c(\Sigma \mathfrak{h}^\bullet)\otimes V^\bullet$  satisfying
\[
(Q\otimes \Id_{S^c\mathfrak{h}}\otimes \Id_V+\Id_{S^c\mathfrak{h}}\otimes R)\circ(\Delta\otimes \Id_V)= (\Delta\otimes \Id_V)\circ R
\]
as maps from $S^c(\Sigma\mathfrak{h}^\bullet)\otimes V^\bullet$ to
$S^c(\Sigma\mathfrak{h}^\bullet)\otimes S^c(\Sigma\mathfrak{h}^\bullet) \otimes V^\bullet$
An $L_\infty$-$\mathfrak{h}^\bullet$-module structure $R$ on
$V^\bullet$ is entirely determined by the maps
$\partial^{n+1}R:S^n(\Sigma\mathfrak{h}^\bullet)\otimes V^\bullet\mor
V^\bullet$. If $\mathfrak{h}^\bullet$~is a DG-Lie algebra and
$V^\bullet$ is a DG-module over it then $V^\bullet$ can be made
into an $L_\infty$-module over $\mathfrak{h}$ by putting $\partial^1R(v)=dv$, $\partial^2R(sg,v)=g\cdot v$, $\partial^nR=0$ for $n\ge 3$. 

If $V^\bullet$ is an
$L_\infty$-$\mathfrak{h}^\bullet$-module then so are $\Sigma^m V^\bullet$ for
all $m$ using the obvious sign convention $\partial^{n+1}R(sg_1,\ldots,sg_n,s^mv)=(-1)^{m(n+|g_1|+\cdots+|g_n|)}\partial^{n+1}R(sg_1,\ldots,sg_n,v)$.
We may combine the $L_\infty$-structures on $\mathfrak{h}^\bullet$ and
$\Sigma V^\bullet$ to make the direct sum $\mathfrak{h}^\bullet\oplus 
V^\bullet$ into an $L_\infty$-algebra. We will denote the resulting $L_\infty$-algebra
by $\mathfrak{h}^\bullet\ltimes V^\bullet$ and call it the \emph{semi-direct product} of $\mathfrak{h}^\bullet$.
This is an obvious generalization of the semi-direct product of a
DG-Lie algebra with a DG-module which was used in \S\ref{ref-8-36}.

Assume that $(V^\bullet,R)$, $(W^\bullet,R)$ are
$L_\infty$-$\frak{h}^\bullet$-modules. An $L_\infty$ morphism $\mu:V^\bullet\mor
W^\bullet$ is a comodule map $\mu: S^c(\Sigma \mathfrak{g})\otimes V^\bullet
\mor S^c(\Sigma \mathfrak{g})\otimes W^\bullet$.
commuting with $R$. It is determined
by its Taylor coefficients $\partial^n \mu: S^n(\Sigma \mathfrak{h}^\bullet)
\otimes V^\bullet\mor  W^\bullet$.

Given in addition an $L_\infty$-morphism 
$\psi:\mathfrak{g}^\bullet\mor \mathfrak{h}^\bullet$
 the pullback $V_\psi^\bullet$ of $V^\bullet$ is defined as follows:
\begin{multline*}
\partial^{n+1} R_{\psi}(sg_1,\ldots,sg_n,v)
=\\\sum_{t,1\le m_1<\cdots < m_{t-1}<n} 
\pm \partial^{t+1}R(\partial^{m_1}\psi(sg_{i_1},\ldots,sg_{i_{m_1}}),
\partial^{m_2-m_1}\psi(sg_{i_{m_1+1}},\ldots,sg_{i_{m_2}}),\\
\ldots,
\partial^{n-m_{t-1}}\psi(sg_{i_{m_{t-1}+1}},\ldots,sg_{n}),v)
\end{multline*}
where for all $j$: $i_{m_j+1}<\cdots<i_{m_{j+1}}$ and the sign is the
Koszul sign of the corresponding shuffle of the $(sg_i)_i$.
By construction we have a canonical $L_\infty$-morphism 
\[
 \psi_V:\mathfrak{g}^\bullet\ltimes V^\bullet_\psi \mor \mathfrak{h}^\bullet\ltimes V^\bullet
\]
which restricted to $S^n(\Sigma \mathfrak{g})$ coincides with $\partial^n\psi$.

\subsection{Twisting}
\label{ref-11.2-62}
Assume that $\psi:\frak{g}^\bullet\mor \frak{h}^\bullet$ is a $L_\infty$-morphism
between $L_\infty$-algebras equipped with some type of topology and
let $\omega\in \frak{g}^1$ be a Maurer-Cartan element in $\mathfrak{g}^1$, i.e.\ a solution of the $L_\infty$-Maurer-Cartan equation
\[
\sum_{i\ge 1} \frac{1}{i!} (\partial^i Q)(\underbrace{\omega\cdots\omega}_i)=0
\]
One has to  assume that one is in a situation
where all occurring series are convergent and standard series
manipulations are allowed.  In our application below the series are in fact finite.

 Define $Q_\omega$, $\psi_\omega$ and $\omega'$  by  \cite{ye3}
\begin{align}
\label{ref-11.1-63}
(\partial^i Q_\omega)(\gamma)&=\sum_{j\ge 0} \frac{1}{j!} (\partial^{i+j} Q)
(\underbrace{\omega\cdots\omega}_j \gamma)\qquad \text{(for $i>0$)}\\
\label{ref-11.2-64}
(\partial^i \psi_\omega)(\gamma)&=\sum_{j\ge 0} \frac{1}{j!} (\partial^{i+j} \psi)
(\underbrace{\omega\cdots \omega}_j \gamma)\qquad \text{(for $i>0$)}\\
\label{ref-11.3-65}
\omega'&=\sum_{j\ge 1} \frac{1}{j!} (\partial^{j} \psi)
(\underbrace{\omega\cdots\omega}_j )
\end{align}
for $\gamma\in S^i(\Sigma\frak{g}^\bullet)$. Then e.g.\ by \cite{ye3}
$\omega'$ is a solution of the Maurer-Cartan equation on $\frak{h}^\bullet$
and  furthermore $\frak{g}^\bullet$, $\frak{h}^\bullet$, when equipped with
$Q_\omega$, $Q_{\omega'}$ are again $L_\infty$-algebras.  Let us
denote these by $\frak{g}^\bullet_\omega$ and $\frak{h}^\bullet_{\omega'}$.  Finally
 $\psi_\omega$ is an $L_\infty$ map $\frak{g}^\bullet_\omega\mor
\frak{h}_{\omega'}^\bullet$.

\subsection{Applying formality to {\mathversion{bold} $\mathfrak{D}(A,\eta)$}}
By  \cite{CF,Ko3,ye3} there
is an $L_\infty$-quasi-isomorphism 
\[
\mathfrak{U}:T^{\poly,\bullet}(A)\rightarrow \bar{\mathfrak{C}}^\bullet(A)
\]
such that $\partial^1 \mathfrak{U}$ is the standard Hochschild-Kostant-Rosenberg quasi-isomorphism.

View $(\overline{\CC}^{\,-}_\bullet(A),\mathsf{b}+u\mathsf{B})$ as an $L_\infty$-module over $T^{\poly,\bullet}(A)$ via $\mathfrak{U}$ 
as in \S\ref{ref-11.1-61}.
We also view $(\Omega^\bullet(A)[[u]],ud)$
as a DG-Lie module over $T^{\poly,\bullet}(A)$ via the Lie derivative.
Then by 
\cite{Dolgushev,Shoikhet2,Tsygan,Willwacher1} 
there is an $L_\infty$-quasi-morphism
of $L_\infty$-modules over $T^{\poly,\bullet}(A)$
\[
\mathfrak{S}:(\overline{\CC}^{\,-}_\bullet(A),\mathsf{b}+u\mathsf{B})\mor (\Omega^\bullet(A)[[u]],ud)
\]
where $\partial^1\mathfrak{S}$ is again the HKR quasi-isomorphism.
Thus we get a roof of $L_\infty$-quasi-morphisms of graded DG-Lie algebras 
\begin{equation}
{\tiny
\label{ref-11.4-66}
\xymatrix{%
& T^{\poly,\bullet}(A)\ltimes \Sigma^{-d-1}\overline{\CC}^{\,-}_\bullet(A)\ar[dl]_{\mathfrak{S}}\ar[dr]^{\mathfrak{U}}&\\
T^{\poly,\bullet}(A)\ltimes \Sigma^{-d-1}\Omega^\bullet(A)[[u]] &&
\bar{\mathfrak{C}}^{\bullet}(A)\ltimes \Sigma^{-d-1}\overline{\CC}^{\,-}_\bullet(A) 
}
}
\end{equation}

We obtain a new roof by twisting with $(0,\eta')$ where $\eta'=s^{-d-1}\eta$. 
\begin{equation}
\label{ref-11.5-67}
{\tiny
\xymatrix{%
& (T^{\poly,\bullet}(A)\ltimes \Sigma^{-d-1}\overline{\CC}^{\,-}_\bullet(A)\ar[dl]_{\mathfrak{S}_{(0,\eta')}}\ar[dr]^{\mathfrak{U}_{(0,\eta')}})_{(0,\eta')}&\\
\mathfrak{T}^\bullet(A,\eta)
&&
\mathfrak{D}^\bullet(A,\eta)
}
}
\end{equation}
where
\[
\mathfrak{T}^\bullet(A,\eta)=(T^{\poly,\bullet}(A)\ltimes \Sigma^{-d-1}\Omega^\bullet(A)[[u]])_{(0,\eta')}
\]
The complexes here are are $2$-step filtered.
The arrows are quasi-isomorphisms since if we take the associated graded
complexes for the $2$-step filtrations 
  we find the same arrows as in \eqref{ref-11.4-66}.
\subsection{Divergence etc\ldots}
\label{ref-11.4-68}
The divergence operator is defined by
\[
\div: T^{\bullet,\poly}(A)\mor T^{\bullet-1,\poly}(A)
\]
via the following identity
\[
d(\gamma \cap \eta)=\div \gamma \cap \eta
\]
We conclude immediately
\[
\div^2=0
\]
and furthermore  the following is true
\cite{Schechtman}:
\[
(-1)^{|\gamma_1|}[\gamma_1,\gamma_2]=\div(\gamma_1\gamma_2)-\div(\gamma_1)\gamma_2-(-1)^{|\gamma_1|+1}\gamma_1\div\gamma_2
\]
So $(T^{\poly,\bullet}(A),-\div,\cup)$ is a BV-algebra (see App.\ \ref{ref-A-76}). 
\begin{proposition} 
\label{ref-11.2-69}
There is an $L_\infty$-isomorphism of DG-Lie algebras
\[
\delta:(T^{\poly,\bullet}(A)[[u]],-u\div)\mor
\mathfrak{T}^\bullet(A)
\]
\end{proposition}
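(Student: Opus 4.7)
The plan is to build $\delta$ as an explicit $L_\infty$-morphism whose first Taylor coefficient is a graded-vector-space isomorphism, and then verify the $L_\infty$-relations order by order. First I would define
\[
\partial^1\delta(\gamma)=(\gamma,0),\qquad \partial^1\delta(u^k\gamma)=(0,\pm s^{-d-1}u^{k-1}(\gamma\cap\eta))\quad(k\ge 1),
\]
with signs tuned once for all. That $\partial^1\delta$ is a chain map reduces to two identities: Cartan's magic formula $L_\gamma\eta=d(i_\gamma\eta)$ (which uses $d\eta=0$) and the defining identity $d(\gamma\cap\eta)=\div\gamma\cap\eta$ of the divergence; together these show that the twisted differential on $\mathfrak{T}^\bullet(A)$ transports via $\partial^1\delta$ to $-u\div$ on the source. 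Bijectivity of $\partial^1\delta$ on the underlying graded vector spaces follows from non-degeneracy of $\eta$, which makes $\gamma\mapsto\gamma\cap\eta$ an isomorphism $T^{\poly,\bullet}(A)\xrightarrow{\cong}\Sigma^{-d}\Omega^\bullet(A)$; combined with the $L_\infty$-relations treated below, this will identify $\delta$ with an $L_\infty$-isomorphism.

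The map $\partial^1\delta$ will not be strict Lie. Using the commutation $[L_{\gamma_1},i_{\gamma_2}]=i_{[\gamma_1,\gamma_2]_S}$ on forms one computes
\[
L_{\gamma_1}(\gamma_2\cap\eta)=[\gamma_1,\gamma_2]_S\cap\eta+(-1)^{\epsilon}(\div\gamma_1\cdot\gamma_2)\cap\eta,
\]
so the failure of $\partial^1\delta$ to commute with the Schouten bracket and the bracket in the twisted semi-direct product is measured exactly by $(\div\gamma_1\cdot\gamma_2)\cap\eta$. I would absorb this discrepancy into a quadratic Taylor coefficient of the shape
\[
\partial^2\delta(u^l\gamma_1,u^k\gamma_2)=(0,\pm s^{-d-1}u^{k+l-2}(\gamma_1\gamma_2)\cap\eta),
\]
extended (graded-)symmetrically in the two slots, and understood as $0$ when the resulting $u$-exponent would be negative. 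The quadratic $L_\infty$-relation for $\delta$ then becomes an instance of the BV-identity recorded in \S\ref{ref-11.4-68},
\[
\div(\gamma_1\gamma_2)-\div\gamma_1\cdot\gamma_2-(-1)^{|\gamma_1|+1}\gamma_1\cdot\div\gamma_2=(-1)^{|\gamma_1|}[\gamma_1,\gamma_2]_S,
\]
which is precisely what is needed to turn the three boundary terms produced by $d_{\mathfrak{T}}\partial^2\delta(x_1,x_2)\pm\partial^2\delta(dx_1,x_2)\pm\partial^2\delta(x_1,dx_2)$ into the Schouten-bracket term. Higher Taylor coefficients, if required, are expected to follow the same pattern $(\gamma_1\cdots\gamma_n)\cap\eta$ applied to iterated wedge products, and the corresponding $L_\infty$-relations then follow from iterated application of the BV-identity together with associativity of the product on $T^{\poly,\bullet}(A)$.

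The conceptual ingredients (Cartan's magic formula, the BV-identity for $\div$, non-degeneracy of $\eta$) are all available, so there is no single structural hurdle to overcome. The hard part will be the combinatorial bookkeeping: pinning down the shifts, the signs inherited from the degree-change operator $s$ of \S\ref{ref-4.6-16} and from the $L_\infty$-Koszul conventions, and checking order by order that the prescribed Taylor coefficients satisfy the full family of $L_\infty$-relations. The cleanest way to organize this is to recognize $\delta$ as an instance of the general construction that attaches the DG-Lie algebra $(V[[u]],-u\Delta)$ to any BV-algebra $(V,\Delta,\cdot)$ and maps it into the semi-direct product of $V$ with a canonical BV-module, so that the infinite family of $L_\infty$-identities is subsumed by the structural axioms of a BV-algebra.
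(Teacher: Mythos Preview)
Your proposal is on the right track, and your final paragraph hits exactly on the paper's method. The paper factors $\delta$ as a composite
\[
(T^{\poly,\bullet}(A)[[u]],-u\div)\xrightarrow{\ \psi\ }(T^{\poly,\bullet}(A)\ltimes\mathfrak{a},-u\div)\xrightarrow{\ \delta'\ }\mathfrak{T}^\bullet(A,\eta),
\]
where $\mathfrak{a}$ is the abelian Lie algebra on $uT^{\poly,\bullet}(A)[[u]]$ equipped with the \emph{twisted} $T^{\poly}$-action $\gamma\star a=[\gamma,a]+(-1)^{|\gamma|}\div\gamma\cup a$. The first arrow is an $L_\infty$-isomorphism coming from a general statement about BV-algebras (Proposition~\ref{ref-A.4-81}): for any BV-algebra $(\mathfrak{h},\Delta,\cup)$ one has $(\mathfrak{h}[[u]],u\Delta)\cong(\mathfrak{h}\ltimes\mathfrak{a},u\Delta)$ as $L_\infty$-algebras, with higher Taylor coefficients $\partial^n\psi(v_1,\ldots,v_n)=v_1\cdots v_n$ supported purely on $\mathfrak{a}^{\otimes n}$ and \emph{vanishing whenever any argument lies in $\mathfrak{h}$}. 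The second arrow is a \emph{strict} DG-Lie isomorphism $(\gamma,a)\mapsto(\gamma,(-1)^{|a|}u^{-1}a\cap\eta')$; the verifications you list (Cartan's formula, the defining identity of $\div$) are precisely what make $\delta'$ strict. So all the $L_\infty$-combinatorics is isolated in the BV-statement, and the geometry (capping with $\eta$) enters only through a strict map.

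Your direct construction differs from this in one concrete respect that would cause trouble. The formula you propose for $\partial^2\delta$ is nonzero whenever $k+l\ge 2$, whereas the composite $\delta'\circ\psi$ has $\partial^2\delta=0$ whenever either argument lies in the $u^0$ part $T^{\poly}$. Test the quadratic relation on a pair $(\gamma_1,u^2\gamma_2)$: your nonzero $\partial^2\delta(\gamma_1,u^2\gamma_2)=(0,\pm(\gamma_1\gamma_2)\cap\eta')$ contributes, via $d_{\mathfrak{T}}$, a term $u\,\div(\gamma_1\gamma_2)\cap\eta'$; combined with the two $\partial^2\delta(d-,-)$ terms and the BV-identity this produces $u\,[\gamma_1,\gamma_2]\cap\eta'$ on the left, while the right-hand side (failure of $\partial^1\delta$ to be strict) is $u\,(\div\gamma_1\cdot\gamma_2)\cap\eta'$. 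These do not agree. The paper's trick is to absorb the correction $(\div\gamma_1)\cup a$ into the bracket of the intermediate object (the $\star$-action) rather than into the Taylor coefficients; this is exactly what allows $\partial^n\psi$ to vanish on mixed arguments and makes the bookkeeping clean. Your closing suggestion to package everything via ``the general construction that attaches $(V[[u]],-u\Delta)$ to any BV-algebra'' is thus not just the cleanest route but also the one that avoids this pitfall.
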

\begin{proof}
According to Proposition \ref{ref-A.4-81} there exists an $L_\infty$-isomorphism
\[
(T^{\poly,\bullet}(A)[[u]],-u\div)\mor (T^{\poly,\bullet}(A)\ltimes \mathfrak{a},-u\div)
\]
where $\mathfrak{a}$ is the abelian graded Lie algebra  on the
vector space $u T^{\poly,\bullet}(A)[[u]]$. The action of $T^{\poly,\bullet}(A)$ on
$\mathfrak{a}$ is given by
\[
\gamma\star a=[\gamma,a]+(-1)^{|\gamma|}\div \gamma\cup a
\]
To finish the proof it is sufficient to show that the following map
\[
\delta':(T^{\poly,\bullet}(A)\ltimes \mathfrak{a},-u\div)\mor \mathfrak{T}^\bullet(A,\eta)=(T^{\poly,\bullet}(A)\ltimes \Sigma^{-d-1}\Omega^\bullet(A)[[u]])_{(0,\eta')}
:(\gamma,a)\mapsto (\gamma,(-1)^{|a|}u^{-1}a\cap \eta')
\]
is an isomorphism of DG-Lie algebras.
First we show that 
\[
\delta': \mathfrak{a}\mor \Sigma^{-d-1}\Omega^\bullet(A)[[u]]:a\mapsto (-1)^{|a|}u^{-1}(a\cap \eta')
\]
is compatible with the action 
of $T^{\poly,\bullet}(A)$. We compute for $\gamma\in T^{\poly,\bullet}(A)$ and
$a\in \mathfrak{a}$.
\begin{align*}
\delta'(\gamma\star {a})&=\delta'([\gamma,{a}]+(-1)^{|\gamma|}\div \gamma\cup {a})\\
&=(-1)^{|\gamma|+|{a}|}u^{-1}([\gamma,{a}]+(-1)^{|\gamma|}\div \gamma\cup {a})\cap \eta'\\
&=(-1)^{|\gamma|+|{a}|}u^{-1}((-1)^{|\gamma|}\div(\gamma\cup {a})-(-1)^{|\gamma|}\div(\gamma)\cup
{a}+\gamma\cup
\div ({a})+(-1)^{|\gamma|}\div (\gamma)\cup {a})\cap \eta'
\\
&=(-1)^{|\gamma|+|{a}|}u^{-1}((-1)^{|\gamma|}\div (\gamma\cup {a})+\gamma\cup \div{a})\cap \eta'\\
&=(-1)^{|\gamma|+|{a}|}u^{-1} ((-1)^{|\gamma|}d(\gamma\cap({a}\cap \eta'))+\gamma \cap d({a}\cap \eta'))\\
&=L_\gamma(\delta'({a}))
\end{align*}
Now we check compatibility with the differential of $\delta'$
on an element $a\in\mathfrak{a}$.
\begin{align*}
\delta'(-u\div a)&=-(-1)^{|a|+1}\div a\cap \eta'\\
&=(-1)^{|a|}d(a\cap \eta')\\
&=d(\delta'(a))
\end{align*}
Finally we check compatibility  with the differential of $\delta'$
on $\gamma\in T^{\poly,\bullet}(A)$. 
\begin{align*}
\delta'(-u\div \gamma)&=-(-1)^{|\gamma|+1}\div \gamma\cap \eta'\\
&=(-1)^{|\gamma|}d(\gamma\cap \eta')\\
&=(-1)^{|\gamma|}L_a\eta'\\
&=[(0,\eta'),(\gamma,0)]\qed
\end{align*}
\def\qed{}\end{proof}
\begin{proof}[Proof of Theorem \ref{ref-11.1-60}] It suffices to combine diagram \eqref{ref-11.5-67} with Proposition \ref{ref-11.2-69}, taking into account that an $L_\infty$-quasi-isomorphism yields
an isomorphism in the homotopy category of DG-Lie algebras via the bar cobar construction.
\end{proof}
\def\BVm{\operatorname{BV}_-}
\section{Obstructions}
Let $\mathfrak{g}^\bullet$ be a DG-Lie algebra and let
$(S,n)\rightarrow (R,m)$ be a surjective morphism in $\Nilp$ with
one-dimensional kernel $ks\subset n$. Let $x\in \mathfrak{g}^1\otimes m$ be a
solution to the Maurer-Cartan equation. Lift $x$ to an arbitrary
element $\hat{x}$ of $\mathfrak{g}^1\otimes n$ and let $p(\hat{x})\in
\mathfrak{g}^2$ be such that
$p(\hat{x})s=d\hat{x}+\frac{1}{2}[\hat{x},\hat{x}]$. Then clearly
$dp(\hat{x})=0$ and furthermore the cohomology class
$o(x)\overset{\text{def}}{=}\overline{p(\hat{x})}\in
  H^2(\mathfrak{g}^\bullet)$ does not depend  on the chosen
  lift $\hat{x}$ of~$x$.  It is easy to see that $o(x)=0$ if and only if $x$ can be lifted to an element
of $\MC(\mathfrak{g}^\bullet\otimes n)$. Consequently $o(x)$ is called the 
\emph{obstruction class} of $x$.

The \emph{obstruction space} $O(\mathfrak{g}^\bullet)$ is
the linear span in $H^2(\mathfrak{g}^2)$ of all $o(x)$ for all morphisms $(S,n)\rightarrow (R,m)$ with one-dimensional kernel and
all $x\in \MC(\mathfrak{g}^\bullet\otimes m)$ as above. 

Clearly $o(x)$ and hence $O(\mathfrak{g}^\bullet)$ is functorial under
DG-Lie algebra morphisms. It is well-known and easy to see that this functoriality extends to
$L_\infty$-morphisms.

Recall that the \emph{periodic cyclic complex}
$\CC_\bullet^{\text{per}}(A)$ of a $k$-algebra $A$ is obtained by inverting $u$ in
$\CC^-_\bullet(A)$. Its homology will be denoted by
$\HC_\bullet^{\text{per}}(A)$.  The following is the main result of
this section.
\begin{theorem} 
\label{ref-12.1-70} Let $(A,\bar{\eta})$ be a $d$-Calabi-Yau algebra. Then
the composition
\[
O(\mathfrak{D}^\bullet(A,\eta))\hookrightarrow H^2(\mathfrak{D}^\bullet(A,\eta))
\overset{\text{Thm \ref{ref-10.1-48}}}{\cong} \HC_{d-3}^-(A)\rightarrow \HC_{d-3}^{\text{per}}(A)
\]
is zero.
\end{theorem}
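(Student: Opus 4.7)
My plan is to construct an explicit primitive of $\Psi(p(\hat x))$ in the periodic cyclic complex $\Sigma^{-d+1}\CC^{\mathrm{per}}_\bullet(A)$, exploiting the Tamarkin--Tsygan Cartan identity \eqref{ref-4.8-13}, which in the periodic complex (where $u$ is invertible) makes Lie derivatives by $d$-closed Hochschild cochains null-homotopic. First, fix a small extension $(S,n)\twoheadrightarrow(R,m)$ with one-dimensional kernel $ks\subset n$ satisfying $s\cdot n=0$, an MC element $x\in\MC(\mathfrak{D}^\bullet(A,\eta_0)\otimes_k m)$, and a lift $\hat x=(\hat\mu,s^{-d-1}\hat\eta)\in\mathfrak{D}^1\otimes_k n$. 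Unwinding the twisted semi-direct-product differential and bracket (following the proof of Lemma~\ref{ref-8.3-40}) gives
\[
p(\hat x)=(P,s^{-d-1}Q),\qquad sP=d\hat\mu+\tfrac12[\hat\mu,\hat\mu],\qquad sQ=(\mathsf{b}+u\mathsf{B})\hat\eta+L_{\hat\mu}(\hat\eta+\eta_0),
\]
so that by Theorem~\ref{ref-10.1-48} the cocycle
\[
\Psi(p(\hat x))=(-1)^{|P|-1}I_P(s^{-d+1}\eta_0)+us^{-d+1}Q
\]
represents the obstruction class in $\HC^-_{d-3}(A)$. The identity $s\Psi(p(\hat x))=d\Psi(\hat x)+\Psi(\tfrac12[\hat x,\hat x])$, obtained by applying $\Psi$ to $sp(\hat x)=d\hat x+\tfrac12[\hat x,\hat x]$, tells me exactly what needs to be trivialized in passing to $\CC^{\mathrm{per}}$.

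My candidate primitive is $\hat\zeta_1=\pm u^{-1}I_{\hat\mu}(s^{-d+1}\eta_0)\pm s^{-d+1}\hat\eta$ in $\Sigma^{-d+1}\CC^{\mathrm{per}}_\bullet(A)\otimes_k n$. A direct computation using \eqref{ref-4.8-13} with $\mu=\hat\mu$, the closure $(\mathsf{b}+u\mathsf{B})\eta_0=0$, and the substitutions $I_{d\hat\mu}=sI_P-\tfrac12 I_{[\hat\mu,\hat\mu]}$ and $(\mathsf{b}+u\mathsf{B})\hat\eta=sQ-L_{\hat\mu}(\hat\eta+\eta_0)$ shows that, up to signs, $d\hat\zeta_1$ equals $u^{-1}s\Psi(p(\hat x))$ modulo a correction proportional to $u^{-1}I_{[\hat\mu,\hat\mu]}(s^{-d+1}\eta_0)$. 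The correction is manageable because $[\hat\mu,\hat\mu]\in\bar{\mathfrak{C}}^\bullet(A)\otimes n^2$ is itself $d$-closed (compute $d[\hat\mu,\hat\mu]=2[d\hat\mu,\hat\mu]=2s[P,\hat\mu]=0$ using $s\cdot n=0$ and Jacobi), so another application of \eqref{ref-4.8-13} together with the identity $I_{\hat\mu}I_{\hat\mu}-(-1)^{(|\hat\mu|+1)^2}I_{\hat\mu\cup\hat\mu}\equiv 0\pmod u$ from the proof of Theorem~\ref{ref-10.2-51} absorbs it into a correction of lower $n$-order. Iterating and using nilpotency of $n$ produces a genuine $\hat\zeta\in\Sigma^{-d+1}\CC^{\mathrm{per}}_\bullet(A)\otimes_k n$ with $d\hat\zeta=s\Psi(p(\hat x))$. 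Finally, reducing $\hat\zeta$ modulo $m$ lands in $\Sigma^{-d+1}\CC^{\mathrm{per}}_\bullet(A)\otimes_k(n/m)=\Sigma^{-d+1}\CC^{\mathrm{per}}_\bullet(A)\otimes_k ks$, so writing $\hat\zeta\bmod m=s\zeta$ and comparing $s$-coefficients gives $d\zeta=\Psi(p(\hat x))$ in $\Sigma^{-d+1}\CC^{\mathrm{per}}_\bullet(A)$, proving the obstruction class vanishes in $\HC^{\mathrm{per}}_{d-3}(A)$.

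The hard part will be careful sign bookkeeping and verifying that the iterative absorption truly terminates: at each stage the correction contains a factor of $I_{[\hat\mu,\hat\mu]}$ or nested Tamarkin--Tsygan $S_{\hat\mu}$-operators, and one must show these strictly decrease the $n$-filtration. A conceptually cleaner alternative is to exhibit $\Psi$ as the first Taylor coefficient of an $L_\infty$-morphism from $\mathfrak{D}^\bullet(A,\eta_0)$ to the \emph{abelian} DG-Lie algebra $\Sigma^{-d+1}\CC^{\mathrm{per}}_\bullet(A)$; since obstruction classes in an abelian target are automatically boundaries, functoriality of obstructions under $L_\infty$-morphisms would yield the theorem at once, and the required higher Taylor coefficients $\tilde\Psi_n$ are built precisely from the Cartan-type constructions above. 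In either formulation, the structural reason the argument works is that \eqref{ref-4.8-13} trivializes the Lie action of $\bar{\mathfrak{C}}^\bullet(A)$ on $\overline{\CC}^-_\bullet(A)$ after inverting $u$, killing the obstructions inherited from the bracket on $\mathfrak{D}^\bullet(A,\eta_0)$.
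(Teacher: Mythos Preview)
Your ``conceptual alternative'' is precisely the paper's approach: one constructs an $L_\infty$-morphism $\Delta$ from $\mathfrak{D}^\bullet(A,\eta)$ to the \emph{abelian} DG-Lie algebra $\Sigma^{-d+1}\CC^{\mathrm{per}}_\bullet(A)$ inducing the map in question on cohomology, and then invokes functoriality of obstructions. The substantive point you leave open is \emph{how} to build $\Delta$. The paper does not build the higher Taylor coefficients by hand from \eqref{ref-4.8-13}; instead it invokes the Tsygan--Daletskii theorem that the Lie action of $\mathfrak{C}^\bullet(A)$ on $\CC^-_\bullet(A)$ extends to an $L_\infty$-action of $(\mathfrak{C}^\bullet(A)[u,\epsilon],d+u\partial/\partial\epsilon)$ with $\partial^2R(s(\epsilon\sigma),-)=I_\sigma$. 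After inverting $u$ the DG-Lie algebra $(\mathfrak{C}^\bullet(A)[u,u^{-1},\epsilon],d+u\partial/\partial\epsilon)$ is acyclic, so the inclusion $0\hookrightarrow\mathfrak{C}^\bullet(A)[u,u^{-1},\epsilon]$ is an $L_\infty$-quasi-isomorphism; this is what lets one invert the wrong-way arrow and produce $\Delta$. A short diagram chase using the chain map $\Psi'$ (extending $\Psi$) then identifies $H^\bullet(\Delta)$ with the canonical map to periodic cyclic homology.

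Your direct computational route is morally the same argument unrolled, but as written it has a gap. The first step is fine: your candidate $\hat\zeta_1$ trivializes everything except a term proportional to $u^{-1}I_{[\hat\mu,\hat\mu]}(s^{-d+1}\eta_0)$. The problem is the next step. To relate $I_{[\hat\mu,\hat\mu]}$ to $I_{\hat\mu}I_{\hat\mu}$ or $I_{\hat\mu\cup\hat\mu}$ you would want Lemma~\ref{ref-4.2-14} (which is what underlies the identity you quote from the proof of Theorem~\ref{ref-10.2-51}), but that lemma requires $d\hat\mu=0$, which fails. The failure is of order $sP-\tfrac12[\hat\mu,\hat\mu]$, so you can try to push it into higher $n$-filtration, but the new correction is not of the same shape as the old one: it involves nested $S_{\hat\mu}$'s and homotopies witnessing Lemma~\ref{ref-4.2-14}, not just $I_{(\cdot)}$'s, and there is no evident reason the procedure closes up using only \eqref{ref-4.8-13}. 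What one genuinely needs is a coherent system of higher homotopies for the relations among $L_x$, $I_x$, $\mathsf{b}+u\mathsf{B}$---and supplying that system is exactly the content of the Tsygan--Daletskii $L_\infty$-action. In short: your structural diagnosis (``\eqref{ref-4.8-13} trivializes the Lie action after inverting $u$'') is correct on homology, but upgrading it to an $L_\infty$-statement is the non-formal input, and the paper imports it rather than rederiving it.
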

The proof depends on the following beautiful result by Daletskii  and  Tsygan
 \cite[Thm 1]{Tsygan1} (see also
  \cite{DaTs}).
\begin{theorem} The Lie action of $\mathfrak{C}^\bullet(A)$ on 
$\CC^-_\bullet(A)$ can be extended to a $u$-linear $L_\infty$-action $R$
of the DG-Lie algebra $(\mathfrak{C}^\bullet(A)[u,\epsilon],d+u\partial/\partial\epsilon)$, with
  $|\epsilon|=1$, $\epsilon^2=0$ and such that
\begin{align*}
\partial^1R(\gamma)&=d\gamma\\
\partial^2R(s\sigma,\gamma)&=L_\sigma \gamma\\
\partial^2R(s(\epsilon \sigma),\gamma)&=I_\sigma \gamma
\end{align*}
for $\sigma\in \mathfrak{C}^\bullet(A)$, $\gamma\in\CC^-_\bullet(A)$.
\end{theorem}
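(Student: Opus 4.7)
The plan is to construct the higher Taylor coefficients $\partial^n R$ ($n \ge 3$) of the
$L_\infty$-module structure by bootstrapping from the calculus identities of \S\ref{ref-4-3},
and to close the induction by an operadic formality argument.

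I would begin by verifying the $L_\infty$-relations at the lowest nontrivial orders.
For a single input $x \in \mathfrak{C}^\bullet(A)[u,\epsilon]$ the quadratic-in-linear relation reads
$[\partial^1R, \partial^2R(sx,-)] = \pm \partial^2R(sDx,-)$ with $D = d + u\partial/\partial\epsilon$.
For $x = \sigma$ this is precisely \eqref{ref-4.7-12}; for $x = \epsilon\sigma$, using
$D(\epsilon\sigma) = \epsilon\, d\sigma + u\sigma$, it becomes exactly \eqref{ref-4.8-13}.
At the next order, the failure of $\partial^2R$ to respect the bracket must be a $\partial^1R$-coboundary,
namely the boundary of $\partial^3R$ on the corresponding input. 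For $(\sigma,\tau)$ there is nothing to do
by \eqref{ref-4.3-8}, so $\partial^3 R$ is zero there; on $(\sigma, \epsilon\tau)$ the discrepancy
$[L_\sigma, I_\tau] - (-1)^{|\sigma|} I_{[\sigma,\tau]}$ is a coboundary by Lemma \ref{ref-4.2-14},
and I would take the witnessing homotopy as $\partial^3R(s\sigma \cdot s(\epsilon\tau),-)$; on
$(\epsilon\sigma, \epsilon\tau)$ the failure $[I_\sigma, I_\tau]$ is a coboundary because
\eqref{ref-4.2-7} identifies $i_\sigma i_\tau$ up to sign with $i_{\tau \cup \sigma}$ and the cup product
is graded commutative up to an explicit brace homotopy, which, once corrected by the $S$-operators
to account for the $u$-terms, becomes $\partial^3R(s(\epsilon\sigma) \cdot s(\epsilon\tau),-)$.

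For $n \ge 4$ I would proceed inductively: at each stage, the $L_\infty$-relations demand that a certain
cocycle assembled from the previously constructed $\partial^{<n}R$ be a $\partial^1R$-coboundary.
The main obstacle is exactly the vanishing of these obstructions, which is not automatic and constitutes
the substance of the theorem. The cleanest way to close the induction is operadic: view the pair
$(\mathfrak{C}^\bullet(A), \CC^-_\bullet(A))$ as an algebra--module over the two-coloured Tamarkin--Tsygan
``calculus'' operad, and derive the theorem from a formality statement for that operad in the spirit of
\cite{Willwacher1} or \cite{Tsygan}. Alternatively one can write down explicit combinatorial formulas for
$\partial^nR$ in terms of iterated braces and powers of the operator $S_\sigma$, generalising the
formulas of \cite{TT}; the trade-off is substantial sign- and combinatorics-bookkeeping in place of the
abstract obstruction-theoretic argument.
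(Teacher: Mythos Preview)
The paper does not prove this theorem. It is quoted as a result of Tsygan and Daletski\u{\i} \cite[Thm~1]{Tsygan1} (see also \cite{DaTs}), with only the remark that the clause about $\partial^2R(s(\epsilon\sigma),\gamma)=I_\sigma\gamma$ is not stated explicitly in \cite{Tsygan1} but follows from the proof there. So there is no in-paper argument to compare your proposal against.

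That said, your low-order verifications are the right ones and match the calculus identities collected in \S\ref{ref-4-3}: the $n=2$ relation on $\sigma$ and on $\epsilon\sigma$ is exactly \eqref{ref-4.7-12} and \eqref{ref-4.8-13}; on $(\sigma,\tau)$ nothing is needed by \eqref{ref-4.3-8}; on $(\sigma,\epsilon\tau)$ Lemma~\ref{ref-4.2-14} supplies the homotopy; and on $(\epsilon\sigma,\epsilon\tau)$ the bracket in $\mathfrak{C}^\bullet(A)[u,\epsilon]$ vanishes (since $\epsilon^2=0$), so one needs $[I_\sigma,I_\tau]$ to be null-homotopic, which indeed follows from \eqref{ref-4.2-7} together with the homotopy-commutativity of $\cup$ and the $S$-corrections.

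The genuine content, as you note yourself, is the inductive step for $n\ge 4$. Here your two proposed closures are not really independent of the cited result: invoking an operadic formality theorem for the calculus pair is essentially appealing to the same circle of results (\cite{Tsygan1,DaTs,TT,dtt2}) that the paper is citing, and the ``explicit combinatorial formulas in terms of iterated braces and $S$-operators'' route is precisely what Tsygan and Daletski\u{\i} carry out. So your outline is a faithful sketch of how the cited proof goes, but it does not constitute an alternative argument; the obstruction-vanishing step remains a black box unless you actually write down the higher $\partial^nR$ (as in \cite{DaTs,TT}) and check the relations.
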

The statement about $\partial^2R(s(\epsilon \sigma),\gamma)$ does not occur in
 \cite[Thm 1]{Tsygan} but it follows easily from the proof.

\medskip

In the rest of this section $(A,\bar{\eta})$ is a $d$-Calabi-Yau algebra. 
\begin{lemma}
  There is a commutative diagram of complexes
\begin{equation}
\label{ref-12.1-71}
\xymatrix{
(\mathfrak{C}^\bullet(A)\ltimes \Sigma^{-d-1}\CC^-_\bullet(A))_{(0,\eta')}
\ar[rr]\ar[dr]_{\Psi} && (\mathfrak{C}^\bullet(A)[u,\epsilon]\ltimes \Sigma^{-d-1}\CC^-_\bullet(A))_{(0,\eta')}\ar[dl]^{\Psi'}\\
&
\Sigma^{-d+1}\CC^-_\bullet(A))&
}
\end{equation}
where 
\begin{itemize}
\item $\Psi$ was introduced in Theorem \ref{ref-10.1-48};
\item $\eta=s^{-d-1}\eta'$;
\item the horizontal map is a twist (see \S\ref{ref-11.2-62}) of the map obtained from the obvious inclusion of DG-Lie algebras
$(\mathfrak{C}^\bullet(A),d)\hookrightarrow (\mathfrak{C}^\bullet(A)[u,\epsilon],d+\partial/\partial\epsilon)$.
\item $\Psi'$ restricted to $\mathfrak{C}^\bullet(A)[u,\epsilon]$ is $u$-linear
and satisfies
\begin{equation}
\label{ref-12.2-72}
\begin{aligned}
\Psi'(\sigma)&=(-1)^{|\sigma|+1}I_\sigma \eta'\\
\Psi'(\epsilon\sigma)&=0
\end{aligned}
\end{equation}
for $\sigma\in \mathfrak{C}^\bullet(A)$.
\item $\Psi'$ restricted to $\Sigma^{-d-1}\CC_\bullet^-(A)$ is multiplication by $u$.
\end{itemize}
\def\qed{}\end{lemma}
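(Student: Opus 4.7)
The plan is to verify (a) commutativity of the triangle and (b) that $\Psi'$ is a chain map with respect to the twisted differentials. A key structural observation makes the verification manageable: neither the potential higher Taylor coefficients $\partial^{n+1}R$ for $n \geq 2$ of the Tsygan--Daletskii $L_\infty$-action, nor any Taylor coefficients of the inclusion beyond $\partial^1$, contribute to the computation. Indeed, an $L_\infty$ semi-direct product is linear in the module factor, so $\partial^{i+j}Q$ vanishes as soon as two or more of its inputs lie in the module part; and the chain-map property only requires compatibility with the degree-one total differential $\partial^1 Q_\omega$, which therefore only sees $\partial^1 Q$ and $\partial^2 Q$.

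For (a): the horizontal inclusion $\iota$ is strict as a DG-Lie morphism, so twisting by $(0, \eta')$ does not alter its underlying linear map. A direct comparison of formulas then yields, for $\mu \in \mathfrak{C}^\bullet(A)$ and $\eta'' \in \Sigma^{-d-1}\CC^-_\bullet(A)$, the identity $\Psi(\mu, \eta'') = (-1)^{|\mu|-1}I_\mu\eta' + u\eta'' = \Psi'(\mu) + \Psi'(\eta'') = (\Psi' \circ \iota)(\mu, \eta'')$, after the evident identification of $\Sigma^{-d-1}\CC^-_\bullet(A)$ with its $s^2$-shift into $\Sigma^{-d+1}\CC^-_\bullet(A)$.

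For (b): by the vanishing observation above, the twisted differential on the source adds to the differential of any $\tau \in \mathfrak{C}^\bullet(A)[u,\epsilon]$ precisely the module-component $\partial^2R(s\tau,\eta')$, which by Tsygan--Daletskii equals $L_\sigma\eta'$ when $\tau=\sigma$ and $I_\sigma\eta'$ when $\tau=\epsilon\sigma$, extended $u$-linearly. Three checks remain: (i) on a chain element $\eta''$, compatibility is immediate since $\Psi'$ is multiplication by $u$, which commutes with $\mathsf{b}+u\mathsf{B}$; (ii) on $\sigma \in \mathfrak{C}^\bullet(A)$ the required identity reduces, using $(\mathsf{b}+u\mathsf{B})\eta' = 0$, to the fundamental relation \eqref{ref-4.8-13}, namely $[\mathsf{b}+u\mathsf{B}, I_\sigma] + I_{d\sigma} = uL_\sigma$; (iii) on $\epsilon\sigma$ the untwisted differential gives $u\sigma - \epsilon d\sigma$, the twist contributes $I_\sigma\eta'$ up to sign, and since $\Psi'(\epsilon(-)) = 0$ the two surviving terms $\Psi'(u\sigma) = u(-1)^{|\sigma|+1}I_\sigma\eta'$ and $\Psi'(I_\sigma\eta') = uI_\sigma\eta'$ cancel after a sign check.

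The main bookkeeping obstacle will be controlling the Koszul signs arising from the $L_\infty$ semi-direct product conventions, the shifts $s^{\pm k}$, and the odd variable $\epsilon$ of degree one. Conceptually, however, the content of the lemma is that \eqref{ref-4.8-13} precisely captures how $I_\sigma$ — the $\epsilon$-component of the Tsygan--Daletskii $L_\infty$-action — mediates between $L_\sigma$ and the cyclic differential $\mathsf{b}+u\mathsf{B}$, which is exactly what is needed to make all three cancellations above work simultaneously.
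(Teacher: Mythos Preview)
Your proposal is correct and follows essentially the same route as the paper's proof: check commutativity directly from the formulas, then verify that $\Psi'$ is a chain map by treating the three types of generators separately, with the $\epsilon\sigma$ case being the only genuinely new computation. The paper streamlines case~(ii) by invoking that $\Psi$ is already known to be a chain map from Theorem~\ref{ref-10.1-48} (rather than re-deriving it from \eqref{ref-4.8-13}), but otherwise the arguments coincide; your explicit remark that higher Taylor coefficients of the twist vanish because the semi-direct product is linear in the module factor is a point the paper leaves implicit.
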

\begin{proof}
The commutativity of the diagram is clear.  We only have to show that $\Psi'$ commutes
with the differential. For $\Psi'$ restricted to $\Sigma^{-d-1}\CC_\bullet^-(A)$
this is obvious. As far as the restriction of $\Psi'$
to $\mathfrak{C}^\bullet(A)[u,\epsilon]$ is concerned: the only non-trivial
case (given that $\Psi$ already commutes with the differential) is the evaluation on an element
of $\epsilon\mathfrak{g}$.

Using \eqref{ref-11.1-63} we find for $\sigma \in \mathfrak{C}^\bullet(A)$
\[
d_{(0,\eta')}(\epsilon\sigma)=(d(\epsilon \sigma),(-1)^{|g|}I_\sigma\eta')
\]
Given \eqref{ref-12.2-72} we have to show
\[
\Psi'(d_{(0,\eta')}(\epsilon\sigma)=0
\]
We compute
\begin{align*}
\Psi' (d_{(0,\eta')}(\epsilon \sigma))&=\Psi'(d(\epsilon \sigma),(-1)^{|\sigma|}I_\sigma\eta')\\
&=\Psi'(-\epsilon d\sigma+u\sigma,(-1)^{|\sigma|}I_\sigma\eta')\\
&=(-1)^{|\sigma|+1}uI_\sigma\eta'+(-1)^{|\sigma|}uI_\sigma\eta'\\
&=0\qed
\end{align*}
\def\qed{}\end{proof}

\begin{lemma} 
\label{ref-12.4-73} Consider $\Sigma^{-d+1}\CC^{\text{per}}_\bullet(A)$ as an abelian DG-Lie algeba. Then there
exists an $L_\infty$-morphism
\[
\Delta:\mathfrak{D}^\bullet(A,\eta)\rightarrow \Sigma^{-d+1}\CC^{\text{per}}_\bullet(A)
\]
such that the following diagram is commutative
\[
\xymatrix{
  H^\bullet(\mathfrak{D}^\bullet(A,\eta))\ar[drr]_{H^\bullet(\Delta)}\ar[rr]^-{H^\bullet(\Psi)}_-{}&&
H^\bullet(\Sigma^{-d+1}\CC^{-}_\bullet(A))\ar[d]^{\text{canonical}}\\
&&H^\bullet(\Sigma^{-d+1}\CC^{\text{per}}_\bullet(A))
}
\]
\end{lemma}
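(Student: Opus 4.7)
The strategy is to promote the chain map $\pi_{\text{per}} \circ \Psi : \mathfrak{D}^\bullet(A,\eta) \to \Sigma^{-d+1}\CC^{\text{per}}_\bullet(A)$ to a full $L_\infty$-morphism landing in the abelian target. The structural reason this is possible is that on the periodic cyclic complex the Connes operator $\mathsf{B}$ is nullhomotopic: for any $\mathsf{b}$-cycle $\alpha$ one has $\mathsf{B}\alpha = u^{-1}(\mathsf{b}+u\mathsf{B})\alpha$, which is a coboundary. Consistently, the Menichi bracket $\mathsf{B}(-\cdot-)$ on $\HC^-_\bullet(A)$ computed in Theorem \ref{ref-10.2-51} vanishes after pushing forward to $\HC^{\text{per}}_\bullet(A)$; this is precisely the compatibility one needs in order to lift $\pi_{\text{per}} \circ \Psi$ to a map into an abelian DG-Lie algebra.

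Concretely I would take $\partial^1\Delta := \pi_{\text{per}} \circ \Psi$; this is already a chain map, and the commutativity of the triangle in the lemma statement is then tautological. For the higher Taylor coefficients I would factor through the enlarged $L_\infty$-algebra appearing in the previous lemma and construct an $L_\infty$-morphism
\[
\Psi'' : (\mathfrak{C}^\bullet(A)[u,\epsilon] \ltimes \Sigma^{-d-1}\CC^-_\bullet(A))_{(0,\eta')} \longrightarrow \Sigma^{-d+1}\CC^{\text{per}}_\bullet(A)
\]
into the abelian target, with $\partial^1\Psi'' = \pi_{\text{per}} \circ \Psi'$, and then define $\Delta$ as the composition of $\Psi''$ with the horizontal inclusion of \eqref{ref-12.1-71}. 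The key observation is that on the periodic complex Cartan's formula \eqref{ref-4.8-13} can be rewritten as
\[
L_\sigma = u^{-1}\bigl([\mathsf{b}+u\mathsf{B}, I_\sigma] + I_{d\sigma}\bigr),
\]
so that $-u^{-1}I_\sigma$ is a chain homotopy between $L_\sigma$ and $0$ modulo a correction vanishing on cocycles. The Daletskii-Tsygan $L_\infty$-action of $\mathfrak{C}^\bullet(A)[u,\epsilon]$ on $\CC^-_\bullet(A)$ then supplies, after rescaling its $\epsilon$-direction by $u^{-1}$, coherent higher null-homotopies for the Lie action on $\CC^{\text{per}}_\bullet(A)$, which are exactly what the higher Taylor coefficients of $\Psi''$ should be.

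The main obstacle will be the explicit construction and verification of these higher Taylor coefficients. I expect $\partial^2\Psi''$ to involve $u^{-1}I$ essentially and to satisfy, at the level of $2$-ary brackets, the identity $d\,\partial^2\Psi''(sg,sh) = \pm\,\partial^1\Psi''(\partial^2 Q_{\text{source}}(sg,sh))$; higher $\partial^n\Psi''$ should then be built inductively by the same principle of trading $L$ for $u^{-1}I$ using the Daletskii-Tsygan structure. The signs and combinatorics should closely follow those appearing in the proof of Theorem \ref{ref-10.2-51}, where the identity $[\mathsf{b}+u\mathsf{B}, I_\sigma] = uL_\sigma - I_{d\sigma}$ already did most of the work of relating $I$ to $L$ modulo coboundaries. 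Once $\Psi''$ is in place, the commutative triangle in the lemma follows by composing the commutative triangle \eqref{ref-12.1-71} with the canonical projection to the periodic complex.
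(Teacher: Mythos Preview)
Your plan identifies the right mechanism --- the Daletskii--Tsygan $L_\infty$-action together with the invertibility of $u$ on the periodic complex is exactly what makes the construction work --- but you are proposing to build the higher Taylor coefficients of $\Psi''$ by hand, and this is where the paper takes a much cleaner route.

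The paper does not attempt an explicit inductive construction of $\partial^n\Psi''$. Instead it observes that the DG-Lie algebra $(\mathfrak{C}^\bullet(A)[u,u^{-1},\epsilon],\,d+u\partial/\partial\epsilon)$ is \emph{acyclic} once $u$ is inverted (the operator $u^{-1}\epsilon$ is a contracting homotopy; this is precisely your ``rescaling the $\epsilon$-direction by $u^{-1}$''). Consequently the inclusion
\[
c:\ (0\ltimes V^{\text{per}})_{(0,\eta')}\ \hookrightarrow\ (\mathfrak{C}^\bullet(A)[u,u^{-1},\epsilon]\ltimes V^{\text{per}})_{(0,\eta')}
\]
is an $L_\infty$-quasi-isomorphism. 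One then inverts $c$ abstractly (bar--cobar), and $\Delta$ is obtained as the composite
\[
\mathfrak{D}^\bullet(A,\eta)\ \to\ (\mathfrak{C}^\bullet(A)[u,\epsilon]\ltimes V^-)_{(0,\eta')}\ \to\ (\mathfrak{C}^\bullet(A)[u,u^{-1},\epsilon]\ltimes V^{\text{per}})_{(0,\eta')}\ \xrightarrow{c'}\ V^{\text{per}}\ \xrightarrow{\times u}\ \Sigma^2 V^{\text{per}}.
\]
No higher Taylor coefficients ever need to be written down or checked. The commutative triangle is then read off from the diagram \eqref{ref-12.1-71} after passing to periodic coefficients, together with the fact that $c'$ inverts $c$ on cohomology.

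Your explicit approach is not wrong in principle --- the Daletskii--Tsygan operations do encode the coherent null-homotopies you are after, and carrying this out would effectively be constructing $c'$ by hand --- but verifying the full tower of $L_\infty$-relations directly is a substantial bookkeeping task that the acyclicity argument avoids entirely. The conceptual payoff of the paper's route is that the single observation ``$\mathfrak{g}[u,u^{-1},\epsilon]$ is contractible'' replaces all of the inductive homotopy-building you anticipate as the main obstacle.
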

\begin{proof}
 To simplify the notations put
  $\mathfrak{g}^\bullet=\mathfrak{C}^\bullet(A)$, $V^-=\Sigma^{-d-1}\CC_\bullet^-(A)$,
$V^{\text{per}}=\Sigma^{-d-1}\CC_\bullet^{\text{per}}(A)$. 
Thus we get $L_\infty$-morphisms (see \S\ref{ref-11.1-61},\S\ref{ref-11.2-62})
\begin{multline}
\label{ref-12.3-74}
(\mathfrak{g}^\bullet\ltimes V^-)_{(0,\eta')}\rightarrow
(\mathfrak{g}^\bullet[u,\epsilon]\ltimes V^-)_{(0,\eta')}
\rightarrow
(\mathfrak{g}^\bullet[u,u^{-1},\epsilon]\ltimes V^{\text{per}})_{(0,\eta')}
\xleftarrow{c} (0\ltimes V^{\text{per}})_{(0,\eta')}
\\\cong 
V^{\text{per}}
\overset{\times u}\cong \Sigma^2 V^{\text{per}}
\end{multline}
Here $c$ goes in the wrong direction but it is easy to see that
$(\mathfrak{g}^\bullet[u,u^{-1},\epsilon],d+u\partial/\partial\epsilon)$ is
acyclic. Hence $c$ is an quasi-isomorphism.  This means that
there is an $L_\infty$-quasi-isomorphism $c'$ which goes in the
opposite direction and which inverts $c$ on the level of cohomology. Taking
the composition of everything we obtain an $L_\infty$-morphism 
\[
(\mathfrak{g}^\bullet\ltimes V^-)_{(0,\eta')}\mor \Sigma^2V^{\text{per}}
\]
which is the desired $\Delta$.

It remains to show that $\Delta$ and $\Psi$ are compatible on the
level of cohomology. This follows from the following commutative diagram 
whose upper row is a compressed version of  \eqref{ref-12.3-74}
and whose lower row we obtain from \eqref{ref-12.1-71}. 
\[
\xymatrix{
  (\mathfrak{g}^\bullet\ltimes V^-)_{(0,\eta')}\ar@/^2em/[rrrr]^{\Delta}\ar[r]\ar[d]_{\Psi}&(\mathfrak{g}^\bullet[u,u^{-1},\epsilon]\ltimes V^{\text{per}})_{(0,\eta')}\ar[d]^{\Psi'}\ar@<1ex>[rr]^-{c'}&\cong&V^{\text{per}}\ar@{=}[d]\ar@<1ex>[ll]^-{c}\ar[r]_{\times u}&\Sigma^2 V^{\text{per}}\ar@{=}[d]\\
\Sigma^2 V^-\ar[r]\ar@/_2em/[rrrr]_{\text{canonical}}&\Sigma^2 V^{\text{per}}&&V^{\text{per}}\ar[ll]^{\times u}\ar[r]^{\times u}&\Sigma^2 V^{\text{per}}
}\qed
\]
\def\qed{}\end{proof}
\begin{proof}[Proof of Theorem \ref{ref-12.1-70}] The theorem follows from Lemma \ref{ref-12.4-73}
together with the functoriality of  obstruction spaces under $L_\infty$-morphisms
and the fact that the obstruction space of an abelian Lie algebra is trivial. 
\end{proof}
\begin{corollary}
\label{ref-12.5-74}
If the map $\HC^{-}_{d-3}(A)\mor \HC^{\text{per}}_{d-3}(A)$ is injective then the
deformation theory of $A$ is unobstructed. 
\end{corollary}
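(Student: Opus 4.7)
The plan is essentially a one-line deduction from Theorem \ref{ref-12.1-70}, but I want to spell out why every step goes through.

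First I would translate "unobstructed" into a vanishing statement for the obstruction space. By the correspondence of Theorem \ref{ref-8.1-37} (together with Proposition \ref{ref-6.2-30}), the pseudo-functor $\Def_{A,\eta}$ is essentially governed by $\MCcal(\mathfrak{D}^\bullet(A,\eta))$, so showing unobstructedness of $\Def_{A,\eta}$ reduces to showing that every Maurer--Cartan element $x \in \mathfrak{D}^1(A,\eta)\otimes m$ for $(R,m) \in \Nilp$ lifts through any small extension $(S,n) \twoheadrightarrow (R,m)$ with one-dimensional kernel. By the standard obstruction calculus recalled at the start of the section, the lift exists iff $o(x) = 0$, and it therefore suffices to show $O(\mathfrak{D}^\bullet(A,\eta)) = 0$.

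Next I would invoke Theorem \ref{ref-12.1-70}, which asserts that the composite
\[
O(\mathfrak{D}^\bullet(A,\eta)) \hookrightarrow H^2(\mathfrak{D}^\bullet(A,\eta)) \xrightarrow{\cong} \HC^-_{d-3}(A) \longrightarrow \HC^{\mathrm{per}}_{d-3}(A)
\]
is the zero map. The first arrow is injective by definition of $O$, the middle arrow is the isomorphism furnished by Theorem \ref{ref-10.1-48}, and the last arrow is the canonical comparison, which by hypothesis is injective. Thus the composite of the last two maps is injective, and the vanishing of the total composite forces the first inclusion $O(\mathfrak{D}^\bullet(A,\eta)) \hookrightarrow H^2(\mathfrak{D}^\bullet(A,\eta))$ to be zero, hence $O(\mathfrak{D}^\bullet(A,\eta)) = 0$.

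Combining these two steps yields the conclusion. There is no real obstacle here: the entire content has been packaged into Theorem \ref{ref-12.1-70}, and the corollary only extracts a direct consequence by composing with an injection. The only point that deserves a brief mention in the write-up is that the language "deformation theory of $A$ is unobstructed" is understood via the identification of $\Def_{A,\eta}$ with $\MCcal(\mathfrak{D}^\bullet(A,\eta))$ provided by \S\ref{ref-8-36}, so that vanishing of $O(\mathfrak{D}^\bullet(A,\eta))$ really does translate into liftability through small extensions (and hence, by induction on the length of $m$, through arbitrary surjections in $\Nilp$).
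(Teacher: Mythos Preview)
Your argument is correct and is exactly the intended one: the paper gives no separate proof of the corollary, treating it as an immediate consequence of Theorem~\ref{ref-12.1-70}, and your write-up simply spells out the evident reasoning (injectivity of the last two maps forces $O(\mathfrak{D}^\bullet(A,\eta))=0$, hence every Maurer--Cartan element lifts through small extensions).
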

This corollary applies for example in the case $d\le 3$ by the following
well-known lemma.
\begin{lemma}
\label{ref-12.6-75}
 $\HC^{-}_{n}(A)\mor \HC^{\text{per}}_{n}(A)$ is an isomorphism for $n\le 0$.
\end{lemma}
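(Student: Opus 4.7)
The plan is to invoke the standard SBI long exact sequence relating negative cyclic, periodic cyclic, and ordinary cyclic homology, and then use the fact that cyclic homology vanishes in negative degrees.

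More precisely, one first sets up the short exact sequence of complexes
\[
0\mor \CC^-_\bullet(A)\mor \CC^{\text{per}}_\bullet(A)\mor \CC^{\text{per}}_\bullet(A)/\CC^-_\bullet(A)\mor 0
\]
where the inclusion is the obvious one (viewing a power series in $u$ as a Laurent series). The quotient complex is $u^{-1}\C_\bullet(A)[u^{-1}]$ with the induced differential $\mathsf{b}+u\mathsf{B}$, and an elementary check (using that multiplication by $u^{-1}$ shifts degree by $-2$) identifies it, up to a shift by $2$, with the ordinary cyclic complex $\CC_\bullet(A)$. The associated long exact sequence in homology is the classical Connes SBI sequence
\[
\cdots\mor \HC_{n-1}(A)\mor \HC^-_n(A)\mor \HC^{\text{per}}_n(A)\mor \HC_{n-2}(A)\mor \HC^-_{n-1}(A)\mor\cdots
\]

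Next one observes that $\HC_k(A)=0$ for $k<0$: indeed $\CC_\bullet(A)$ is concentrated in non-negative homological degrees so its homology is automatically non-negatively graded. Consequently, for $n\le 0$ both groups $\HC_{n-1}(A)$ and $\HC_{n-2}(A)$ vanish, and the middle map in the SBI sequence is forced to be an isomorphism, proving the claim.

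There is no real obstacle: the only ingredient beyond the excerpt is the standard SBI sequence, which appears e.g.\ in Loday/Weibel, and the vanishing of $\HC_k(A)$ in negative degrees is immediate from degree considerations. The one small point to check carefully is the identification of the quotient complex $\CC^{\text{per}}_\bullet(A)/\CC^-_\bullet(A)$ with a shift of $\CC_\bullet(A)$, which amounts to recognizing the Connes $B$-complex inside the Tsygan bicomplex.
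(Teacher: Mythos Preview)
Your proposal is correct and takes essentially the same approach as the paper: both invoke the SBI long exact sequence $\HC_{n-1}(A)\to\HC^-_n(A)\to\HC^{\text{per}}_n(A)\to\HC_{n-2}(A)$ (the paper simply cites Loday, Prop.~5.1.5, rather than sketching its construction) and conclude by noting that $\HC_k(A)=0$ for $k<0$ since the cyclic complex lives in non-negative homological degrees.
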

\begin{proof} There is an exact sequence
\[
\HC_{n-1}(A) \mor \HC^{-}_{n}(A)\mor \HC^{\text{per}}_{n}(A)\mor \HC_{n-2}(A)
\]
(e.g.\ \cite[Prop.\ 5.1.5]{Loday1}) where $\HC_\bullet(A)$ denotes
ordinary cyclic homology. The complex computing ordinary cyclic
homology is concentrated in homological degrees $\ge 0$. Hence
$\HC_n(A)=0$ for $n<0$. This finishes the proof.
\end{proof}
\begin{remark} Many 3-dimensional Calabi-Yau algebras are obtained from superpotentials
  (see \cite{Bocklandt,VdBsuper}). 
For those it is is not very surprising that the 
deformation theory is unobstructed (the deformations come from deforming the superpotential). However
there are examples of 3-dimensional Calabi-Yau algebras which are not obtained
from superpotentials. See e.g.\ \cite{davison}. Simple examples are given by 
3-dimensional smooth commutative Calabi-Yau algebras  with no exact volume form.
\end{remark}
\appendix

\section{A technical result on BV-algebras}
\label{ref-A-76}
Recall that a DG-BV-algebra is a quadruple $(\mathfrak{g}^\bullet,d,\Delta,\cup)$ where $(\mathfrak{g}^\bullet,d)$
is a complex, $\cup$ is a commutative, associative product of degree\footnote{As always our
grading conventions are such that Lie brackets have degree zero.} $1$ on $\mathfrak{g}^\bullet$ compatible
with $d$, $\Delta$ is a differential of degree $-1$, $(\mathfrak{g}^\bullet,d,[-,-])$ is a
DG-Lie algebra with $[-,-]$ defined by:
\[
[g,h]=(-1)^{|g|+1}(\Delta(g\cup h)-\Delta g\cup h-(-1)^{|g|+1}g\cup \Delta h)
\]
and $\cup$, $[-,-]$ are related by the Leibniz rule:
\[
[g,h_1\cup h_2]=[g,h_1]\cup h_2+(-1)^{|g|(|h_1|+1)}h_1\cup [g,h_2]
\]
It is shown in \cite{KKP,Terilla} that if $\mathfrak{h}^\bullet$ is a DG-BV-algebra then 
$(\mathfrak{h}^\bullet((u)),d+u\Delta)$ is homotopy abelian. The same proof works
for $u\mathfrak{h}^\bullet[[u]],d+u\Delta)$ but not for $(\mathfrak{h}^\bullet[[u]],d+u\Delta)$.
Our aim in this section is to make $(\mathfrak{h}^\bullet[[u]],d+u\Delta)$ as ``commutative as possible''
(see Proposition \ref{ref-A.4-81} below) by making at least its sub-DG-Lie algebra
$(u\mathfrak{h}^\bullet[[u]],d+u\Delta)$ abelian. This is not completely straightforward
since in order to do this we have to twist the action of $\mathfrak{h}^\bullet$ on $u\mathfrak{h}^\bullet[[u]]$.

The fact that $(\mathfrak{h}^\bullet((u)),d+u\Delta)$ and
$(u\mathfrak{h}^\bullet[[u]],d+u\Delta)$ are homotopy abelian is in fact a special
case of a general result in \cite{ShTa}.
For the benefit of the reader we repeat the proof of this
result. Afterwards we will reuse the proof to treat
$(\mathfrak{h}^\bullet[[u]],d+u\Delta)$.

It is convenient to use the following adhoc definition.
\begin{definition}
A $\BVm$ algebra is a DG-Lie algebra $\mathfrak{g}^\bullet$ equipped with a commutative, associative
product $\cup$ of degree $-1$, compatible with $d$,  such that 
\begin{equation}
\label{ref-A.1-77}
[g,h]=(-1)^{|g|+1}(d(g\cup h)-dg\cup h-(-1)^{|g|+1}g\cup dh)
\end{equation}
and
\begin{equation}
\label{ref-A.2-78}
[g,h_1\cup h_2]=[g,h_1]\cup h_2+(-1)^{|g|(|h_1|+1)}h_1\cup [g,h_2]
\end{equation}
\end{definition}
\begin{lemma} \cite{ShTa}
\label{ref-A.2-79}
Let ${\mathfrak{g}^\bullet}$ be a $\BVm$-algebra and let 
${\mathfrak{a}^\bullet}$ be the same as ${\mathfrak{g}^\bullet}$ but with the Lie bracket set to
zero. Then there is a $L_\infty$-morphism $\psi:{\mathfrak{g}^\bullet}\mor {\mathfrak{a}^\bullet}$
such $\partial^1\psi$ is the identity. In other words ${\mathfrak{g}^\bullet}$ is
homotopy abelian.
\end{lemma}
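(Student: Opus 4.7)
The plan is to write $\psi$ explicitly in terms of iterated cup products. Setting $\partial^1\psi(sg) = sg$ and for $n \ge 2$
\[
\partial^n\psi(sg_1,\ldots,sg_n) \;=\; c_n\, s(g_1 \cup g_2 \cup \cdots \cup g_n),
\]
with constants $c_n$ to be fixed by the $L_\infty$ relations, gives a well-defined graded-symmetric coalgebra map $S^c(\Sigma \mathfrak{g}^\bullet) \to S^c(\Sigma \mathfrak{a}^\bullet)$ with $\partial^1\psi = \mathrm{id}$, because $\cup$ is graded commutative and associative. The whole point of the $\BVm$ axioms is that the single $n=2$ identity \eqref{ref-A.1-77} plus the derivation rule \eqref{ref-A.2-78} should be exactly enough to make this ansatz $L_\infty$.

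First I would check the $n=2$ case. Since $\mathfrak{a}^\bullet$ has zero bracket, the target contributes only its differential, and the $L_\infty$-morphism equation collapses (after a brief sign-tracking calculation using $d(sg) = -sdg$ and $\partial^2Q(sg_1,sg_2)=(-1)^{|g_1|}s[g_1,g_2]$) to
\[
-s\,d(g_1 \cup g_2) + s(dg_1 \cup g_2) - (-1)^{|g_1|}s(g_1 \cup dg_2) \;=\; (-1)^{|g_1|} s[g_1,g_2],
\]
which is exactly the defining relation \eqref{ref-A.1-77} and fixes $c_2 = 1$. For $n \ge 3$, the $L_\infty$-morphism equation, once unwound, amounts to the identity
\[
s\,d(g_1 \cup \cdots \cup g_n) \;=\; \sum_i \pm s(g_1 \cup \cdots \cup dg_i \cup \cdots \cup g_n) \;+\; \sum_{i<j}\pm\, s\bigl([g_i,g_j]\cup g_1 \cup \cdots \widehat{g_i}\cdots\widehat{g_j}\cdots\cup g_n\bigr),
\]
which I would prove by induction on $n$. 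The step is: split off $g_1$ and apply \eqref{ref-A.1-77} to the factorization $g_1 \cup (g_2 \cup \cdots \cup g_n)$, producing $dg_1 \cup (\cdots)$, $g_1 \cup d(g_2 \cup \cdots \cup g_n)$, and a bracket $[g_1, g_2 \cup \cdots \cup g_n]$. The derivation axiom \eqref{ref-A.2-78}, iterated, expands this bracket into $\sum_{j\ge 2} \pm [g_1,g_j]\cup(\text{rest})$, supplying all bracket terms with $i=1$, while the inductive hypothesis applied to $d(g_2 \cup \cdots \cup g_n)$ handles the remaining derivative terms and the brackets $[g_i,g_j]$ with $2 \le i < j$.

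The main obstacle is sign bookkeeping: the Koszul signs arising from shuffles and suspensions in the $L_\infty$-morphism identity have to match those produced by iterated applications of \eqref{ref-A.1-77} and \eqref{ref-A.2-78}. I expect a uniform choice of $c_n$ (some closed combinatorial expression, determined by the recursion) makes everything consistent, and once the inductive skeleton is set up the verification is algorithmic and contains no further conceptual content beyond the two structural axioms of a $\BVm$-algebra. The assertion $\partial^1\psi = \mathrm{id}$ is manifest from the construction.
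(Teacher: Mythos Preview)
Your approach is essentially the same as the paper's: the paper defines $\partial^n\psi(sg_1,\ldots,sg_n)=s(g_1\cup\cdots\cup g_n)$ and reduces the $L_\infty$-morphism condition to precisely the identity you state, which it records as following from \eqref{ref-A.1-77} and \eqref{ref-A.2-78}. One minor point: your hedging about the constants $c_n$ is unnecessary---the $L_\infty$ relation at level $n$ compares a $c_n$-weighted differential term against $c_n$-weighted Leibniz terms and $c_{n-1}$-weighted bracket terms, so the recursion is simply $c_n=c_{n-1}$, giving $c_n=1$ for all $n$; no combinatorial expression arises.
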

\begin{example} Let $({\mathfrak{h}^\bullet},d,\Delta,\cup)$ be a DG-BV-algebra. Then
  $(u{\mathfrak{h}^\bullet}[[u]],d+u\Delta,[-,-],u^{-1}(-\cup-))$ is a
  $\BVm$-algebra and hence by the previous lemma 
  $(u{\mathfrak{h}^\bullet}[[u]],d+u\Delta)$ is homotopy abelian. The same reasoning applies
to $(\mathfrak{h}^\bullet((u)),d+u\Delta)$.
\end{example}

\begin{proof}[Proof of Lemma \ref{ref-A.2-79}]
Put $V^\bullet=\Sigma {\frak{g}^\bullet}$. The coderivation $Q$ on $S^cV^\bullet$ corresponding to the
DG-Lie structure is given by
\[
\partial^1 Q:V^\bullet\mor V^\bullet:sg\mapsto -s\,dg
\]
\[
\partial^2 Q:S^2 V^\bullet\mor V^\bullet:(sg,sh)\mapsto (-1)^{|g|}s[g,h]
\]
and all other $\partial^n Q$ are zero.

For simplicity of notation we put
\[
sg_1\cdot sg_2 \cdots  sg_n=s(g_1\cup\cdots \cup g_n)
\]
From \eqref{ref-A.1-77}\eqref{ref-A.2-78} we obtain:
\begin{equation}
\label{ref-A.3-80}
\begin{aligned}
\partial^1 Q(v_1\cdot v_2\cdots v_n)&=
\sum_i \epsilon_i \partial^1 Q(v_i)v_1\cdots \hat{v}_i\cdots v_n
+\sum_{i<j} \epsilon_{i,j}\partial^2 Q(v_i,v_j)v_1\cdots \hat{v}_i\cdots \hat{v}_j\cdots v_n
\end{aligned}
\end{equation}
where the signs are determined by
\begin{align*}
v_1\cdot v_2\cdots v_n&=\epsilon_i v_i\cdot v_1\cdots \hat{v}_i\cdots v_n\\
&=\epsilon_{i,j}v_i\cdot v_j\cdot v_1\cdots \hat{v}_i\cdots \hat{v}_j\cdots v_n
\end{align*}
Consider $\partial^1 Q$ as a coderivation of $S^cV^\bullet$
and let $\psi:S^cV^\bullet\mor S^cV^\bullet$ be the coalgebra automorphism determined 
by
\[
\partial^n\psi(v_1,\ldots,v_n)=v_1\cdot v_2\cdots v_n
\]
 Then \eqref{ref-A.3-80} becomes
\[
\partial^1 Q\circ \psi=\psi\circ Q
\]
which finishes the proof.
\end{proof}
\begin{proposition} 
\label{ref-A.4-81} Let $({\mathfrak{h}^\bullet},d,\Delta,\cup)$ be a
DG-BV-algebra. Let ${\mathfrak{a}^\bullet}$ be the graded vector space
 $u{\mathfrak{h}^\bullet}[[u]]$. The following operation
\begin{equation}
\label{ref-A.4-82}
h\star a=[h,a]+(-1)^{|h|+1}\Delta(h)\cup a
\end{equation}
for $h\in {\mathfrak{h}^\bullet}$, $a\in {\mathfrak{a}^\bullet}$ makes ${\mathfrak{a}^\bullet}$
into a graded ${\mathfrak{h}^\bullet}$-representation.
Furthermore $d+u\Delta$ defines a derivation on the Lie algebra
${\mathfrak{h}^\bullet}\ltimes {\mathfrak{a}^\bullet}$ and finally there is an
$L_\infty$-isomorphism
\[
\phi:{\mathfrak{h}^\bullet}[[u]]\mor ({\mathfrak{h}^\bullet}\ltimes {\mathfrak{a}^\bullet},d+u\Delta)
\]
such that $\partial^1\phi$ is the identity.
\end{proposition}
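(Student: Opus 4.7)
I plan to verify the three assertions in sequence.

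For (i), the action axiom $h_1 \star (h_2 \star a) - (-1)^{|h_1||h_2|}\,h_2 \star (h_1 \star a) = [h_1, h_2] \star a$ expands, after substituting $h \star a = [h, a] + (-1)^{|h|+1}\Delta h \cup a$, into Jacobi for the pure bracket part, the Leibniz rule from the DG-BV structure for the cross terms $[h_i, \Delta h_j \cup a]$, and---using that $\Delta$ is a derivation of $[-,-]$, itself a consequence of applying the DG-BV defining identity for $[-,-]$ twice---the remaining pure $\Delta\cup$ terms assemble into $(-1)^{|[h_1,h_2]|+1}\Delta[h_1, h_2] \cup a$. For (ii), the Leibniz rule on pure $\mathfrak{h}^\bullet$-pairs reduces to $d$ being a derivation of $[-,-]$ plus matching the $u\Delta[h_1, h_2]$ term against the $\star$-twisted cross terms $\pm h_i \star u\Delta h_j$ via the DG-BV identity; the $\mathfrak{h}^\bullet \times \mathfrak{a}^\bullet$ case $(d+u\Delta)(h \star a)$ is handled similarly using that $d$ is a derivation of $\cup$, $d\Delta + \Delta d = 0$, and the DG-BV identity.

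For (iii), put $V^\bullet = \Sigma \mathfrak{h}^\bullet[[u]]$ and $W^\bullet = \Sigma(\mathfrak{h}^\bullet \ltimes \mathfrak{a}^\bullet)$, which coincide as graded vector spaces. Decompose each $h \in \mathfrak{h}^\bullet[[u]]$ as $h = h^{(0)} + p(h)$ with $h^{(0)} \in \mathfrak{h}^\bullet$ and $p(h) \in \mathfrak{a}^\bullet$, and propose
\[
\partial^1\phi = \mathrm{Id}, \qquad \partial^n\phi(sh_1, \ldots, sh_n) = \bigl(0,\; s\, u^{-(n-1)}\, p(h_1) \cup \cdots \cup p(h_n)\bigr) \quad (n \geq 2).
\]
The second component lies in $u\mathfrak{h}^\bullet[[u]] = \mathfrak{a}^\bullet$ since each $p(h_i)$ carries a factor of $u$. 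Because $\partial^1\phi = \mathrm{Id}$, the coalgebra morphism $\phi$ is automatically invertible, so once the $L_\infty$-morphism equations are verified we obtain an $L_\infty$-isomorphism. Restricted to pure $\mathfrak{a}^\bullet$-inputs, the formula coincides with the $\psi$ of Lemma \ref{ref-A.2-79} applied to the $\BVm$-algebra $(u\mathfrak{h}^\bullet[[u]], d+u\Delta, [-,-], u^{-1}(-\cup-))$ of the Example, so on those inputs the equations hold automatically, matching the abelianization of $\mathfrak{a}^\bullet$ on the target.

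The substantive step is verifying the $L_\infty$-morphism equations on mixed inputs, where the key computation adapts \eqref{ref-A.3-80}. Applying $d + u\Delta$ to $u^{-(n-1)} p(h_1) \cup \cdots \cup p(h_n)$ produces, via Leibniz for $d$ over $\cup$ and the DG-BV identity, three families of terms: (a) $(d+u\Delta)p(h_i)$-terms from propagating the differential through the product; (b) $[p(h_i), p(h_j)]$-terms coming from the BV failure-of-derivation, which absorb the abelianization discrepancy between source and target brackets; and (c) terms $u^{-(n-2)}\Delta h_i^{(0)} \cup \prod_{j\neq i} p(h_j)$ that arise as matching contributions to $\partial^n\phi$ composed with $d_V = -s(d+u\Delta)$ applied to each $v_i$ (because $(d+u\Delta)h_i$ picks up a component $u\Delta h_i^{(0)} \in \mathfrak{a}^\bullet$ that $\partial^n\phi$ then multiplies into the cup product). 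The family (c) is exactly what matches the $\star$-twist $\Delta h^{(0)} \cup a$ in the target Lie bracket. The main obstacle is the sign bookkeeping through this iteration; conceptually the proof works because $d + u\Delta$ packages the original differential and the BV operator into a single operation whose deviation from being a derivation of $\cup$ is exactly $u[-,-]$, precisely the structure that the $\star$-twisted semi-direct product absorbs.
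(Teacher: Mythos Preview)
Your proposal is correct and follows essentially the same route as the paper. Your $L_\infty$-morphism $\partial^n\phi(sh_1,\ldots,sh_n)=(0,s\,u^{-(n-1)}p(h_1)\cup\cdots\cup p(h_n))$ is, after expanding by multilinearity in the decomposition $sh_i = sh_i^{(0)} + sp(h_i)$, exactly the paper's map (zero whenever any input has a nonzero $\mathfrak{h}^\bullet$-component, and the iterated $u^{-1}\cup$-product on pure $\mathfrak{a}^\bullet$-inputs); the reduction to Lemma~\ref{ref-A.2-79} for pure $\mathfrak{a}^\bullet$-inputs and the role of the $\star$-twist are also the same.

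The one place where the paper is more efficient is the mixed-input verification: rather than tracking your three families (a), (b), (c) term by term, the paper first isolates the single structural identity
\[
\partial^2 Q'(w,v)=\partial^2 Q(w,v)+\partial^1 Q_2(w)\cdot v\qquad(w\in\Sigma\mathfrak{h}^\bullet,\ v\in\Sigma\mathfrak{a}^\bullet,\ \partial^1 Q_2=-su\Delta)
\]
and then splits the $L_\infty$-equation on $S^iW\otimes S^jV$ by the number $i$ of $\mathfrak{h}^\bullet$-inputs. For $i\ge 2$ (with $j\ge 1$) both sides vanish outright, leaving only $i=1$ as a genuine computation; your exposition does not make this vanishing explicit, and your description of family~(c) leaves slightly ambiguous which side of the equation it sits on. None of this is a gap---your computation would uncover the vanishing---but organizing it the paper's way avoids the sign bookkeeping you flagged as the main obstacle.
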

\begin{proof}
 In the proof below we identify the underlying
vector spaces of $\mathfrak{h}^\bullet[[u]]$ and $\mathfrak{h}^\bullet\ltimes\mathfrak{a}$ in the obvious way.
The fact that \eqref{ref-A.4-82} defines indeed a
representation as well as compatibility with differentials is an easy direct verification:
Now put $V^\bullet=\Sigma {\mathfrak{a}^\bullet}$, $W^\bullet=\Sigma {\mathfrak{h}^\bullet}$.
Let $Q$ be the coderivation on $S^c(W^\bullet\oplus V^\bullet)$ corresponding to 
${\mathfrak{h}^\bullet}[[u]]$. We observe
that $\partial^1Q{\mid} W^\bullet=\partial^1 Q_1+\partial^2 Q_2$ where
$\partial^1 Q_1=-d$ and $\partial^1 Q_2=-u\Delta$. Let $Q'$ be the
coderivation on $S^c(W^\bullet\oplus V^\bullet)$ corresponding to 
$({\mathfrak{h}^\bullet}\ltimes {\mathfrak{a}^\bullet},d+u\Delta)$. We have $\partial^1Q'=\partial^1 Q$. Furthermore
\begin{align*}
\partial^2 Q'(w_1,w_2)&=\partial^2 Q(w_1,w_2)&&\text{for $w_1,w_2\in W^\bullet$}\\
\partial^2 Q'(v_1,v_2)&=0&&\text{for $v_1,v_2\in V^\bullet$}
\end{align*}
and for $h\in {\mathfrak{h}^\bullet}$, $a\in {\mathfrak{a}^\bullet}$
\begin{align*}
\partial^2 Q'(sh,sa)&=(-1)^{|h|}s(h\star a)\\
&=(-1)^{|h|}s[h,a]-s(\Delta h\cup a)\\
&=\partial^2 Q(sh,sa)+\partial^1 Q_2(sh)\cdot sa
\end{align*}
where as above $x\cdot y=u^{-1}(x\cup y)$.
In other words
\begin{align}
\label{ref-A.5-83}
\partial^2 Q'(w,v)&=\partial^2 Q(w,v)+\partial^1 Q_2(w)\cdot v
&&\text{for $w\in W^\bullet$, $v\in V^\bullet$}
\end{align}
We now construct the desired $L_\infty$-morphism. By definition $\partial^n \psi=\Id$ for $n=1$. For $n>1$, $i\ge 1$, $w_1,\ldots,w_i\in W^\bullet$, $v_1,\ldots,v_j\in V^\bullet$
we put
\[
\partial^n\psi(w_1,\ldots,w_i,v_1,\ldots,v_j)=0
\]
and
\[
\partial^n\psi(v_1,\ldots,v_j)=v_1\cdot v_2\cdots v_n
\]
We now verify
\[
\psi \circ Q=Q'\circ \psi
\]
We must evaluate both sides on $S^i W^\bullet\otimes S^j V^\bullet$. If $i=0$ then 
the desired equality follows from the proof of Lemma \ref{ref-A.2-79}.
If $i> 2$ then both sides are zero so this case is trivial as well.
If $i=2$ then both sides are zero unless $j=0$ in which case we
reduce to $\partial^2 Q{|} S^2W^\bullet=\partial^2 Q'{|} S^2W^\bullet$.

We concentrate on the case $i=1$. We find
\[
(Q'\circ \psi)(w_1,v_1,\ldots,v_j)=\partial^2Q'(w_1,v_1\cdot v_2\cdots v_n)
\]
and
\begin{align*}
(\psi\circ Q)(w_1,v_1,\ldots,v_j)&=\partial^1Q_2(w_1)\cdot v_1\cdots v_j+\sum_l \pm\partial^2Q(w_1,v_l)\cdot v_1
\cdots \hat{v}_l\cdots v_j\\
&=\partial^1Q_2(w_1)\cdot v_1\cdots v_j+\partial^2Q(w_1,v_1\cdot v_2\cdots v_j)
\end{align*}
We conclude by \eqref{ref-A.5-83}.
\end{proof}
\def\cprime{$'$} \def\cprime{$'$} \def\cprime{$'$}
\ifx\undefined\bysame
\newcommand{\bysame}{\leavevmode\hbox to3em{\hrulefill}\,}
\fi

\end{document}